\newtheorem{thm}{Theorem}
\newtheorem{cor}{Corollary}
\theoremstyle{remark}
\theoremstyle{definition}
 \theoremstyle{definition}
 \theoremstyle{remark}
 \numberwithin{equation}{section}
\def\bege{\begin{equation}} \def\ende{\end{equation}}
   \def\begr{\begin{eqnarray}}
\def\endr{\end{eqnarray}} 
\def\bege{\begin{equation}} \def\ende{\end{equation}}
\def\begr{\begin{eqnarray}} \def\endr{\end{eqnarray}}
\def\bnum{\begin{enumerate}} \def\enum{\end{enumerate}}
\begin{document}

\title[Nuclear  Volterra composition operators]{Nuclear  Volterra composition operators between Bloch and weighted type spaces}
\thanks{The first author is thankful to  CSIR(India), Grant Number (File no. 09/1231(0001)/2019-EMR-I).\\
The second author is  thankful to  DST(SERB) for the project grant under MATRICS scheme. Grant No. MTR/2018/000479.}
\keywords {Nuclear operator;   Volterra composition operator, weighted Banach space, Bergman space, weighted Bloch space, little weighted Banach space, little weighted Bloch space.}
\subjclass[2010]{47B33, 47B38, 46E10, 32A37.}

\author[A. Sharma]{Aakriti
 Sharma}

\address{
Department of   Mathematics,
Central  University of Jammu,
Central University of Jammu, (Bagla) Rahya-Suchani, Samba-181143, J\&{K},
 India.}

\email{aakritishma321@gmail.com}

\author[A. K. Sharma]{Ajay
K. Sharma}

\address{%
Department of   Mathematics,
Central  University of Jammu,
Central University of Jammu, (Bagla) Raya-Suchani, Samba-181143, J\&{K},
 India.}

\email{aksju\_76@yahoo.com}

\maketitle

\begin{abstract}
In this paper, we  completely characterize nuclear Volterra composition operators  $T^\phi_g : \mathcal H^\infty_\nu \longrightarrow \mathcal H^\infty_\mu$ and
$S^\phi_g : \mathcal H^\infty_\nu \longrightarrow \mathcal H^\infty_\mu$
acting
 between weighted type spaces   in terms of the symbols $g$ and $\phi$  of $T^\phi_g$  and
$S^\phi_g$ and weights $\nu$ and $\mu$,  when the weights $\nu$ and $\mu$  are  normal weights in the sense of Shields and Williams. Moreover, nuclear Volterra composition operators
acting  between little weighted type spaces and Bloch spaces of order $\beta$ are also characterized.
\end{abstract}

\section{Introduction}
\label{intro} Denote  by ${H}(\mathbb{D})$, the class of all analytic functions on $\mathbb{D}$, where $\mathbb{D}$ is the open unit disk in the complex plane $\mathbb{C}$. For $g \in H(\mathbb D),$ Ch. Pommerenke  \cite{JHS} introduced  an integral operator $T_g : H(\mathbb D) \to H(\mathbb D)$ which maps every $f \in H(\mathbb D)$ to a function vanishing at $0,$ defined as
$$T_gf(z) = \int_{0}^{z}f(\zeta)dg(\zeta) = \int_{0}^{z}f(\zeta) g^{\prime}( z) d \zeta,\; \; \; z \in \mathbb D.$$  Another integral operator, generally known as  a companion operator of  $T_{g }$ was  defined by Yoneda   \cite{Yo} as $$ S_{g }f(z) =  \int_{0}^{z}f'(\zeta)  g(\zeta)d\zeta, \; \; \; z \in \mathbb D.$$ The operators $T_{g}$ and $S_{g}$ can be viewed as  cousins of Hankel and Toeplitz operators and they appeared in the literature under various names such as Volterra operators,  Cesaro operators, Riemann Stieltjes operators and  integration operators. %  They arises in various branches of physics, mechanics, complex analysis, functional analysis, operator theory and in other parts of mathematics. For example, sum of $T_{g}$ and $I_{g}$ is a multiplication operator induced by $g$ and these operators are related to the resolvent operators induced by semi-groups of composition operators and arises also arises as Berezin transform in the study of  weighted composition operators.  \\
For any $g \in {H}(\mathbb{D})$ and $\phi$ an analytic self-map of $\mathbb D$, the   Volterra composition operators
 $T^\phi_g$ and  $S^\phi_g$ are defined, respectively on ${H}(\mathbb{D})$ by
$$(T^\phi_g f)(z) =
\int_{0}^{z}
f(\phi(\zeta)) g'(\zeta) d\zeta, \; \; \; z \in \mathbb D $$ and $$(S^\phi_g f)(z) =
\int_{0}^{z}
f'(\phi(\zeta)) g(\zeta)d\zeta, \; \; \; z \in \mathbb D. $$ Recently, these type of operators are considered on spaces of analytic functions by several authors, see \cite{AlC01}-\cite{Ba14}, \cite{MDC},\cite{MDC1},\cite{ELP},\cite{LS}-\cite{QL1} and references therein. Motivated by results in \cite{JME} and \cite{FTL}, in this paper, we characterize nuclear  Volterra  composition  operators between Bloch and weighted type spaces of analytic functions,  when the weights  are  normal. % For more about these type of studies we refer the readers to  \cite{JME}-\cite{KZ}.
 %Recall that the Bloch space $\mathcal B$ is a space of all functions $f$ in  ${H}(\mathbb{D})$ such that $ \|f\|_{1}=  \sup_{z\in \mathbb D}(1-|z|^{2} )|f'(z)| < {\infty}.$  $\|\cdot\|_{1}$ is a semi-norm on $\mathcal{B}$ and a norm can be defined on $\mathcal B$ as  \begin{equation}\|f\|_{\mathcal B}=|f(0)|+\sup_{z\in \mathbb D}(1-|z|^{2})|f'(z)|=|f(0)|+\|f\|_{1}.\end{equation}
%Endowed with the   norm $\|\cdot\|_{\mathcal B}$, $\mathcal B$ becomes a Banach space.   Similarly, the little Bloch space $\mathcal B_0$ is a closed subspaces of $\mathcal B$ defined as  $\lim_{|z|\rightarrow 1^{-}}(1-|z|^{2} )|f'(z)| =0.$
 Recall that a radial weight $\nu$ is a non-increasing, non-negative continuous function defined on $\mathbb{D}$ such that $\nu(z)=\nu(|z|)$ for all $z\in \mathbb D.$   %A  radial weight $\nu$ is called analytic if  $\nu(z)={1}/{f(|z|)}$   for some $f \in  {H}(\mathbb{D})$  such that $|f(z)|\le f(|z|)$ for all $z\in \mathbb{D}$.
 A radial weight $\nu$ is normal in the sense of Shields and Williams if it satisfies following two conditions:
\begin{enumerate}
\item [{($\mathbf{I}$).}] There exists  $\beta > 0$ such that the function $r\rightarrow\frac{\nu(r)}{(1-r^{2})^{\beta}}$ is almost increasing, or equivalently, $\inf_{n}\frac{\nu(1-2^{-(n+1)})}{\nu(1-2^{-n})}>0$.
\item [{($\mathbf{II}$).}] There exists $\gamma > 0$ such that  the function $r\rightarrow\frac{\nu(r)}{(1-r^{2})^{\gamma}}$ is almost decreasing, or equivalently, these is some $k \in \mathbb N$ such that $\limsup_{n}\frac{\nu(1-2^{-(n+k)})}{\nu(1-2^{-n})}<1$.
\end{enumerate}
It is clear that the standard weights $\nu_{\alpha}(z) = (1-|z|^{2})^\alpha, (\alpha > 0)$ are normal weights. Throughout this paper, we denote the  set of all normal weights on $\mathbb D$ by $N_W(\mathbb D).$  For more about normal weights, we refer to \cite{ASD},\cite{Ba14} and \cite{DJT1}.
The weighted Banach spaces $H^{\infty}_{\nu}$ and $H_{\nu}^{0}$ are defined as follows:
$$\mathcal H_{\nu}^{\infty} = \bigg\lbrace{f \in {H}(\mathbb{D}):\|f\|_{\mathcal H_{\nu}^{\infty}}=\sup_{z\in\mathbb{D}} \nu(z)|f(z)|  < \infty\bigg \rbrace}$$ and $$H_{\nu}^{0}=\bigg\lbrace{f \in  {H}(\mathbb{D}): \lim_{|z|\rightarrow 1^{-}}\nu(z)|f(z)|=0\bigg \rbrace}.$$
The weighted Bloch space $\mathcal B_\nu$ is the space of all functions $f$ in  ${H}(\mathbb{D})$ such that $ \|f\|_{\nu}=  \sup_{z\in \mathbb D} \nu(z) |f'(z)| < {\infty}.$ Note that $\|\cdot\|_{\nu}$ is a semi-norm on $\mathcal{B}_\nu$ and a norm can be defined on $\mathcal B_\nu$ as  \begin{equation*}\|f\|_{\mathcal B_\nu}= |f(0)| +\sup_{z\in \mathbb D} \nu(z) |f'(z)|.\end{equation*}
Endowed with the   norm $\|\cdot\|_{\mathcal B_\nu}$, $\mathcal B_\nu$ becomes a Banach space.   Similarly, the little weighted Bloch space $\mathcal B^0_{\nu }$ is  defined as  $\lim_{|z|\rightarrow 1^{-}} \nu(z) |f'(z)| =0.$
If $\lim_{|z|\rightarrow 1^{-}}\nu(z)=0$, then the weight $\nu$ is called typical and for typical weights $H_{\nu}^{\infty}$ is the bidual space of $H_{\nu}^{0}$. If the weight $\nu$ satisfies the condition ($\mathbf{I}$), then $\nu$ satisfies condition ($\mathbf{II}$) if and only if $H_{\nu}^{0}=\mathcal{B}^0_{\nu(z)(1-|z|^{2})}$, see \cite{WLY3}. In particular, if $\nu \in N_W(\mathbb D)$, then $H_{\nu}^{0}=\mathcal{B}_{\nu(z)(1-|z|^{2})}^{0}$ and by duality,
$H_{\nu}^{\infty} = \mathcal{B}_{\nu(z)(1-|z|^{2})}$, see  \cite{WLY}. Moreover, if   $\nu \in N_W(\mathbb D)$, then the topological isomorphisms $H_{\nu}^{0}\simeq c_{0}$ and $H_{\nu}^{\infty}\simeq l_{\infty}$ hold, see \cite{JME},\cite{ASD1}  and \cite{WLY}-\cite{WLY2}. Next, let us recall
a duality pairing  between $\mathcal H^\infty_\nu$ and a weighted  Bergman space, which was established by Shields and Williams in \cite{ASD}. Following \cite{ASD}, let $\nu$ be a typical weight   and $\omega$ a radial, positive and continuous function which satisfies $\int_{0}^{1}\omega(r)dr < \infty.$ Let
$A^1_{\omega}$ be the subspace of $H(\mathbb D)$ consists of functions $f$ such that $\|f\|_{A^1_{\omega}} = \int_{\mathbb D} |f(z)|\omega(z) dA(z) < \infty,$ where $dA$ is the normalized Lebesgue measure on $\mathbb D.$
For a normal weight $\nu$  and $\omega$ a function defined as above,  the  pair   $\{\nu, \omega\}$ is called a normal pair if \begin{equation}\label{1b1}\nu(r) \omega(r) = (1 - r^2)^\alpha, \; \; \; 0 \leq r < 1\end{equation} for some $\alpha > \beta -1$, where $\beta $ is a positive real number defined as in $(\mathbf{I}).$ For a  normal pair $\{\nu, \omega\}$, the following pairing between $\mathcal H^\infty_\nu$  and $A^1_{\omega}$ is defined in \cite{ASD}, see p. 292.
\begin{align} \label{1a1}
(f, g) & = \int_{\mathbb D} f(z) g(\bar{z})(1 - |z|^2)^\alpha dA(z)  = \int_{\mathbb D} f(\bar{z}) g({z})(1 - |z|^2)^\alpha dA(z).  \end{align}
 The pairing $(\cdot, \cdot)$ define a duality relation $(\mathcal H^0_\nu)' = A^1_{\omega},$ see \cite{ASD}, Theorem 2, p. 296. Thus we also have  $(\mathcal H^0_\nu)'' = \mathcal H^\infty_\nu.$ Moreover, if
 $K_\zeta(z) = (\alpha + 1)/(1 - {\zeta}z)^{\alpha + 2},$ $\alpha > -1$ is the kernel function and $\{\nu, \omega\}$ a normal pair, then $K_\zeta \in \mathcal H^0_\nu \cap A^1_{\omega},$ $g(\zeta) = (K_\zeta, g)$ for all $g \in A^1_{\omega}$ and $f(\zeta) = (f, K_\zeta)$ for all $f \in \mathcal H^\infty_\nu,$ see Lemma 10 in \cite{ASD}. \\

Recall that a linear operator $T$ between two Banach space $X$ and $Y$ is
\begin{enumerate}
\item[{(a)}] Nuclear if there is a sequence  $(x'_{k})\subset X'$ and  a sequence  $(y_{k})\subset Y$ such that $$\sum_{k}\|x'_{k}\|\|y_{k}\|<{\infty} \mbox{ and }  T = \sum_{k=1}^{\infty}x'_{k}\otimes y_{k},$$ where $x'_{k} \otimes y_{k}:X \rightarrow Y$ is  the mapping defined as $x\rightarrow x'_{k}(x)y_{k}$.
\item[{(b)}] Absolutely summing if there is $C>0$ such that for all finite sequences $(x_{i})_{i=1}^{n}\subset X$ we have $$\sum_{i=1}^{n}\|Tx_{i}\|\le C\sup_{\|x'\|\le1}\sum_{i=1}^{n}|x'(x_i)|= C \sup_{|\eta _{i}|=1}\|\sum_{i=1}^{n}\eta_{i}x_{i}\|.$$
%We define the absolutely summing norm of an operator $T$ by the least admissible constant and it is denoted by $\pi_{1}(T)$.
\end{enumerate}
Moreover, it is well known that
any nuclear operator is compact and absolutely summing.
Also if $X= c_{0}$ or $l_{\infty}$, then a linear operator $T: X \rightarrow Y$ is absolutely summing if and only if it is nuclear, see \cite{FTL}.\\

Throughout this paper, $C$ denotes a positive constant, the exact value of which may  be not be same at each occurrence. The expression $A\lesssim B$ means that there is a  positive constant $C$ such that $A \le CB$ and the expression $A\asymp B$ means that there are  positive constants $C$ and $D$ such that $CB\le A\le DB$.
\section{Nuclear Volterra composition operators}\label{sec:prelimi}
     In this section, we characterize nuclear  Volterra composition operator between Bloch  and weighted  spaces of analytic functions. \\
     %To  prove our next result, we need the following growth estimate  see Lemma 5 in \cite{EW} for details.
%\begin{lem}\label{ll2} Let $n\in \mathbb{N}_0 $ and  $\nu$ satisfies condition {\rm ($\mathbf{I}$\rm)}. Then   for any   $z \in \mathbb{D},$ we have that
%$$|f^{n}(z)|  \lesssim \frac{\|f \|_{\mathcal H_{\nu}^{\infty}}}{\nu(z)(1-|z|^{2})^{n}}$$
%for all $f \in H_{\nu}^{\infty}$.
%\end{lem}
     Next we present characterizations of bounded  Volterra composition operators between  weighted  spaces of analytic functions.  The arguments are standard and may possibly  appear in literature in a more general sense,  so we give an outline proof for completeness.
 \begin{thm} \label{th1} Let $\nu, \mu \in N_W(\mathbb D),$ $g \in H(\mathbb D)$ and $\phi$ a self-map of $\mathbb D$. Then   \begin{enumerate}
\item [{\rm($i$)}] The following are equivalent. \begin{enumerate}
\item[{\rm($a$)}]  $T_{g}^{\phi}: \mathcal H^{\infty}_{\nu} \longrightarrow  \mathcal H^{\infty}_{\mu}$ is bounded.
\item[{\rm($b$)}]  $T_{g}^{\phi}: \mathcal H^{0}_{\nu} \longrightarrow  \mathcal H^{\infty}_{\mu}$ is bounded.
\item[{\rm($c$)}]  $g$, $\nu,$ $ \mu$ and $\phi$ satisfy the following condition:  \begin{equation} \label{aak101}M_1 = \sup_{z\in \mathbb{D}}(1-|z|^{2}) \frac{\mu(z)}{\nu(\phi(z)) }|g'(z)| < \infty. \end{equation} \end{enumerate}
Moreover,   $T_{g}^{\phi}: \mathcal H^{0}_{\nu} \longrightarrow  \mathcal H^{0}_{\mu}$ is bounded if and only if
$ g \in \mathcal B^{0}_{(1-|z|^{2})\mu(z)} $ and {\rm (}\ref{aak101}{\rm)} holds.\item [{\rm($ii$)}] The following are equivalent.  \begin{enumerate}
\item[{\rm($d$)}]   $S_{g}^{\phi}: \mathcal H^{\infty}_{\nu} \longrightarrow  \mathcal H^{\infty}_{\mu}$ is bounded.\item[{\rm($e$)}]   $S_{g}^{\phi}: \mathcal H^{0}_{\nu} \longrightarrow  \mathcal H^{\infty}_{\mu}$ is bounded.
\item[{\rm($f$)}]  $g$, $\nu,$ $ \mu$ and $\phi$ satisfy the following condition:  \begin{equation} \label{aak102}\sup_{z\in \mathbb{D}}\frac{(1-|z|^{2})}{(1 - |\phi(z)|^2)}\frac{ \mu(z)}{ \nu(\phi(z)) }|g(z)| < \infty.\end{equation}  \end{enumerate} Moreover,   $S_{g}^{\phi}: \mathcal H^{0}_{\nu} \longrightarrow  \mathcal H^{0}_{\mu}$ is bounded if and only if
 $g \in \mathcal H^{0}_{(1-|z|^{2})\mu(z)} $  and {\rm (}\ref{aak102}{\rm)} holds.
\end{enumerate} \end{thm}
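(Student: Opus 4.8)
The plan is to reduce every assertion to a pointwise estimate by exploiting the identifications $\mathcal H^\infty_w=\mathcal B_{w(z)(1-|z|^2)}$ and $\mathcal H^0_w=\mathcal B^0_{w(z)(1-|z|^2)}$ recorded above for normal weights $w$. The key structural observation is that both operators annihilate the value at the origin and are controlled entirely by their derivatives, namely
$$(T^\phi_g f)'(z)=f(\phi(z))\,g'(z),\qquad (S^\phi_g f)'(z)=f'(\phi(z))\,g(z).$$
Applying the Bloch identification to the target weight $\mu$ then gives
$$\|T^\phi_g f\|_{\mathcal H^\infty_\mu}\asymp\sup_{z\in\mathbb D}(1-|z|^2)\mu(z)\,|f(\phi(z))|\,|g'(z)|,\qquad \|S^\phi_g f\|_{\mathcal H^\infty_\mu}\asymp\sup_{z\in\mathbb D}(1-|z|^2)\mu(z)\,|f'(\phi(z))|\,|g(z)|.$$
This converts the boundedness problem for $T^\phi_g$ into a uniform estimate for $|f(\phi(z))|$, and for $S^\phi_g$ into one for $|f'(\phi(z))|$, over the unit ball of $\mathcal H^\infty_\nu$.

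For the sufficiency directions $(c)\Rightarrow(a)$ and $(f)\Rightarrow(d)$, I would invoke the point evaluations $\nu(w)|f(w)|\le\|f\|_{\mathcal H^\infty_\nu}$ and, again through the Bloch identification on $\nu$, the derivative estimate $\nu(w)(1-|w|^2)|f'(w)|\lesssim\|f\|_{\mathcal H^\infty_\nu}$. Substituting $w=\phi(z)$ into the two displayed norm formulas yields $\|T^\phi_g f\|_{\mathcal H^\infty_\mu}\lesssim M_1\|f\|_{\mathcal H^\infty_\nu}$ and the analogous bound by the supremum in (\ref{aak102}) for $S^\phi_g$. The inclusion $\mathcal H^0_\nu\subset\mathcal H^\infty_\nu$ makes $(a)\Rightarrow(b)$ and $(d)\Rightarrow(e)$ automatic, so the only nontrivial remaining links in each chain are the necessity implications $(b)\Rightarrow(c)$ and $(e)\Rightarrow(f)$.

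Those I would settle with the standard normalized test functions attached to a normal weight. For a fixed $w\in\mathbb D$ one takes a power kernel of the form $f_w(z)=\nu(w)^{-1}\big((1-|w|^2)/(1-\bar w z)\big)^{\beta}$, where $\beta$ is the exponent from condition $(\mathbf I)$; almost-monotonicity of $\nu(r)/(1-r^2)^{\beta}$ forces $\|f_w\|_{\mathcal H^\infty_\nu}\lesssim 1$, while $\nu(w)|f_w(w)|\asymp 1$ and, reading off the derivative of the same kernel, $\nu(w)(1-|w|^2)|f_w'(w)|\asymp 1$ for $w$ away from the origin. Since each $f_w$ is bounded and $\nu$ is typical, $f_w\in\mathcal H^0_\nu$, so the functions are admissible for hypotheses $(b)$ and $(e)$. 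Evaluating the assumed boundedness at $z=w$ in the derivative formulas isolates the single terms $(1-|w|^2)\mu(w)\nu(\phi(w))^{-1}|g'(w)|$ and its $S$-analog, whence (\ref{aak101}) and (\ref{aak102}) follow. I expect the construction of these test functions, with simultaneous control of the $\mathcal H^\infty_\nu$-norm, membership in the little space, and the correct peak order in $\nu$ at the prescribed point $\phi(w)$, to be the main obstacle; this is exactly the place where the normality hypotheses $(\mathbf I)$ and $(\mathbf II)$ pin down the admissible exponents.

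Finally, for the two \emph{Moreover} statements on $\mathcal H^0_\nu\to\mathcal H^0_\mu$, I would prove necessity of the extra condition on $g$ by the simplest test functions: $f\equiv 1$ gives $T^\phi_g 1=g-g(0)$, forcing $g\in\mathcal B^0_{(1-|z|^2)\mu(z)}$, while $f(z)=z$ gives $(S^\phi_g f)'=g$, forcing $g\in\mathcal H^0_{(1-|z|^2)\mu(z)}$, both admissible since normal weights are typical. For sufficiency I would use that polynomials are dense in $\mathcal H^0_\nu$ (via $\mathcal H^0_\nu\simeq c_0$): for a polynomial $p$ the composition $p\circ\phi$ is bounded on $\mathbb D$, so the $\mathcal B^0$, respectively $\mathcal H^0$, hypothesis on $g$ drives $(1-|z|^2)\mu(z)|(T^\phi_g p)'(z)|$ and $(1-|z|^2)\mu(z)|(S^\phi_g p)'(z)|$ to $0$ as $|z|\to1$, placing $T^\phi_g p$ and $S^\phi_g p$ in $\mathcal H^0_\mu$. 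Combining this with the boundedness into $\mathcal H^\infty_\mu$ already obtained and the closedness of $\mathcal H^0_\mu$ inside $\mathcal H^\infty_\mu$, a routine density argument then gives $T^\phi_g(\mathcal H^0_\nu)\subset\mathcal H^0_\mu$ and likewise for $S^\phi_g$.
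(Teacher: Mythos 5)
Your overall strategy coincides with the paper's: identify $\mathcal H^\infty_\mu$ with $\mathcal B_{(1-|z|^2)\mu(z)}$ so that $T^\phi_g$ and $S^\phi_g$ become the weighted composition operators $M_{g'}C_\phi$ and $M_gC_\phi D$, prove sufficiency from the pointwise estimates $\nu(w)|f(w)|\le\|f\|_{\mathcal H^\infty_\nu}$ and $\nu(w)(1-|w|^2)|f'(w)|\lesssim\|f\|_{\mathcal H^\infty_\nu}$, and prove necessity with normalized kernels $\nu(\phi(w))^{-1}\bigl((1-|\phi(w)|^2)/(1-\overline{\phi(w)}z)\bigr)^{\beta}$ (the paper uses exponent $\beta+1$; both choices work, by condition ($\mathbf{I}$)). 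You are in fact more self-contained than the paper, which outsources $(a)\Leftrightarrow(c)$ and the $T$-"Moreover" statement to Propositions 3.1 and 3.2 of Contreras--Hern\'andez-Mancera and $(d)\Leftrightarrow(f)$ to Theorem 7 of Manhas--Zhao; your direct arguments for these pieces (the $f\equiv 1$ and $f(z)=z$ necessity tests, and polynomial density plus closedness of $\mathcal H^0_\mu$ in $\mathcal H^\infty_\mu$ for sufficiency on the little spaces) are sound.

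The one genuine gap is in $(e)\Rightarrow(f)$. As you yourself note, the kernel's derivative peaks only "away from the origin": at the peak one gets $\nu(\phi(w))(1-|\phi(w)|^2)\,|f_{\phi(w)}'(\phi(w))|\asymp|\phi(w)|$, so your test-function computation establishes the supremum in (\ref{aak102}) only over a set such as $\{z:|\phi(z)|>1/2\}$, and the concluding claim "whence (\ref{aak102}) follows" is unjustified on $\{z:|\phi(z)|\le 1/2\}$. That region is not negligible ($\phi$ may map most of $\mathbb D$ near $0$), and on it the quantity still to be controlled is $(1-|z|^2)\mu(z)|g(z)|$, whose boundedness does not follow from anything proved up to that point. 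The paper closes this exactly by bifurcating the supremum at $|\phi(z)|=1/2$: on the inner region $1-|\phi(z)|^2\ge 3/4$ and $\nu(\phi(z))\ge\nu(1/2)>0$, so the expression in (\ref{aak102}) is $\lesssim(1-|z|^2)\mu(z)|g(z)|$, and this is finite because applying the assumed boundedness to $f(z)=z\in\mathcal H^0_\nu$ gives $\sup_{z}(1-|z|^2)\mu(z)|g(z)|\lesssim\|S^\phi_g f\|_{\mathcal H^\infty_\mu}<\infty$. Since you already use the test function $f(z)=z$ in your "Moreover" argument, the repair costs one line, but as written the implication is incomplete. (A purely notational remark: the kernel must be attached to the point $\phi(w)$ and evaluated at $z=w$, i.e.\ it is $f_{\phi(w)}$ rather than $f_w$; the isolated term you display shows this is what you intend.)
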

\begin{proof} $(i)$ Since $\mu$ is a normal weight, so $\mathcal H_{\mu(z)} = \mathcal{B}_{(1-|z|^{2})\mu(z)}.$ Thus boundedness of $T_{g}^{\phi}: \mathcal H^{\infty}_{\nu} \longrightarrow  \mathcal H^{\infty}_{\mu}$ is equivalent to the boundedness of $M_{g'}C_{\phi} : \mathcal H^{\infty}_{\nu} \longrightarrow  \mathcal H^{\infty}_{(1-|z|^{2})\mu(z)}.$ By Proposition 2 in \cite{DJT1}, every normal weight is essential, that is there is a constant $C > 0$ such that $\nu(z) \leq \tilde{\nu}(z) \leq C \nu(z)$ for each $z \in \mathbb D$.\\
$(a) \Leftrightarrow (c)$ %Suppose that (\ref{aak101}) holds.
 Then by Proposition 3.1 in  \cite{MDC2}, we have that $(a) \Leftrightarrow (c).$\\
 It is obvious that $(a) \Rightarrow (b).$ Thus to complete the proof, we need to show that  $(b) \Rightarrow (c).$\\
  $(b) \Rightarrow (c).$ Suppose that $M_{g'}C_{\phi} : \mathcal H^{0}_{\nu} \longrightarrow  \mathcal H^{\infty}_{(1-|z|^{2})\mu(z)}$ is bounded. For a fixed $\zeta \in \mathbb D,$ consider the function $ f_\zeta $ defined on $\mathbb D$ as
$$f_{\zeta}(z) = \frac{(1-|\phi(\zeta)|^2)^{\beta + 1}}{\nu(\phi(\zeta))(1- \overline{\phi(\zeta)}z)^{\beta+1}},$$ where $\beta > 0$ is as in condition
 $(\mathbf{I})$ of normal weight. Then it is easy to see that $ f_{\zeta} \in \mathcal H^{0}_{\nu} $ and $\sup_{\zeta \in \mathbb D}\| f_{\zeta}\|_{\mathcal H^{\infty}_{\nu}} \lesssim 1.$ Moreover, $f_{\zeta}(\phi(\zeta)) = 1/\nu(\phi(\zeta)).$
Thus by the boundedness of $M_{g'}C_{\phi} : \mathcal H^{\infty}_{\nu} \longrightarrow  \mathcal H^{\infty}_{(1-|z|^{2})\mu(z)}$, we have that  $$ (1-|\zeta|^{2})\mu(\zeta) |g'(\zeta)| |f_\zeta(\phi(\zeta))| \leq   \|M_{g'}C_{\phi}  f_\zeta \|_{\mathcal H^{\infty}_{(1-|z|^{2})\mu(z)}}  \lesssim \| f_\zeta \|_{\mathcal H^{\infty}_{\nu}} \lesssim 1. $$
 Taking supermum over $\zeta \in \mathbb D,$ we get (\ref{aak101}).\\
By Proposition 3.2 in  \cite{MDC2}, we have that  $T_{g}^{\phi}: \mathcal H^{0}_{\nu} \longrightarrow  \mathcal H^{0}_{\mu}$ is bounded if and only if
$ g \in \mathcal B^{0}_{(1-|z|^{2})\mu(z)} $ and {\rm (}\ref{aak101}{\rm)} holds.  This completes the proof of $(i)$.\\
$(ii)$  Thus boundedness of $S_{g}^{\phi}: \mathcal H^{\infty}_{\nu} \longrightarrow  \mathcal H^{\infty}_{\mu}$ is equivalent to the boundedness of $M_{g}C_{\phi}D : \mathcal H^{\infty}_{\nu} \longrightarrow  \mathcal H^{\infty}_{(1-|z|^{2})\mu(z)},$ where $D$ is the differentiation operator. We write an outline proof of  $(d) \Leftrightarrow (f)$ and $(e) \Leftrightarrow (f)$ The details of all other cases are omitted.\\
$(d) \Leftrightarrow (f)$ %Suppose that (\ref{aak102}) holds. Then
By Theorem 7 in  \cite{MZ1}, we have that $(d) \Leftrightarrow (f).$\\
 $(e) \Rightarrow (f).$ Suppose that $M_{g}C_{\phi}D : \mathcal H^{\infty}_{\nu} \longrightarrow  \mathcal H^{\infty}_{(1-|z|^{2})\mu(z)}$ is bounded.   For a fixed $\zeta \in \mathbb D,$ consider the function $ f_\zeta $ defined as in case $(i).$ Then proceeding as in case $(i)$, we have that   $f'_{\zeta}(\phi(\zeta)) = \overline{\phi(\zeta)}/(1-|\phi(\zeta)|^{2})\nu(\phi(\zeta)).$
Thus by the boundedness of $M_{g}C_{\phi}D : \mathcal H^{\infty}_{\nu} \longrightarrow  \mathcal H^{\infty}_{(1-|z|^{2})\mu(z)}$, we have that  \begin{equation*}  (1-|\zeta|^{2})\mu(\zeta) |g(\zeta)| |f'_\zeta(\phi(\zeta))| \leq   \|M_{g}C_{\phi}D  f_\zeta \|_{\mathcal H^{\infty}_{(1-|z|^{2})\mu(z)}}  \lesssim \| f_\zeta \|_{\mathcal H^{\infty}_{\nu}} \lesssim 1. \end{equation*}   Taking supermum over $\zeta \in \mathbb D,$ we get \begin{equation*}  \sup_{z\in \mathbb{D}}\frac{(1-|\zeta|^{2})\mu(\zeta)}{(1 - |\phi(\zeta)|^2)\nu(\phi(\zeta)) }|\phi(\zeta)| |g(\zeta)| < \infty.\end{equation*} Thus \begin{equation}\label{xxl}  \sup_{|\phi(\zeta)| > 1/2}\frac{(1-|\zeta|^{2})\mu(\zeta)}{(1 - |\phi(\zeta)|^2)\nu(\phi(\zeta)) }  |g(\zeta)| < \infty.\end{equation} The above condition easily yields (\ref{aak102}) by bifurcating the supermum over $z \in \mathbb D$ in (\ref{aak102}) as sum of supermum over $|\phi(\zeta)|\leq 1/2$ and  supermum over $|\phi(\zeta)| > 1/2$.  Thus the proof can be finished, using the fact that the first term in this sum is definitely finite and the second term in this sum is finite by (\ref{xxl}).\end{proof}
 %To prove our characterizations, we also need the following two lemmas. The proof of first one can be obtained by replacing $T_{g}$ in Lemma 1 in  \cite{Ba14} by $T^\phi_{g}$  and the proof of second one can be obtained by replacing $S_{g}$ in Lemma 2.12 in  \cite{QL} by $S^\phi_{g}$. We omit the details.

\begin{thm} \label{lm1}  Let $\nu, \mu \in N_W(\mathbb D).$ If $g \in H(\mathbb D)$ and $\phi$ a self-map of $\mathbb D$ such that $T_{g}^{\phi}: \mathcal H^{0}_{\nu} \longrightarrow  \mathcal H^{\infty}_{\mu}$ is bounded, then   \begin{enumerate}
\item [{\rm($i$)}] The following are equivalent. \begin{enumerate}
\item[{\rm($a$)}]  $T_{g}^{\phi}: \mathcal H^{0}_{\nu} \longrightarrow  \mathcal H^{\infty}_{\mu}$ is compact.
\item[{\rm($b$)}]  $T_{g}^{\phi}: \mathcal H^{0}_{\nu} \longrightarrow  \mathcal H^{\infty}_{\mu}$ is weakly compact.
\item[{\rm($c$)}]  The bi-transpose $(T_{g}^{\phi})''$ of $T_{g}^{\phi}: \mathcal H^{0}_{\nu} \longrightarrow  \mathcal H^{\infty}_{\mu}$  satisfies $(T^\phi_{g})'' = T^\phi_{g}$. \end{enumerate}
\item [{\rm($ii$)}] If $ g \in \mathcal B^{0}_{(1-|z|^{2})\mu(z)} $, then $T^\phi_{g}  : \mathcal H_{\nu}^{0} \rightarrow  \mathcal  H_{\mu}^{0}$  is bounded and the bi-transpose $(T^\phi_{g})''$ of $T^\phi_{g}$ satisfies $(T^\phi_{g})'' = T^\phi_{g}$. Moreover, $T^\phi_{g}  : \mathcal H_{\nu}^{0} \rightarrow  \mathcal  H_{\mu}^{0}$  is compact if and only if $T^\phi_{g}(H^{\infty}_{\nu}) \subset  \mathcal H^{0}_{\mu}.$
\end{enumerate} \end{thm}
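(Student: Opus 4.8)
The strategy is to run everything through the bitranspose, using the isomorphism $\mathcal H^0_\nu\simeq c_0$ and the duality $(\mathcal H^0_\nu)''=\mathcal H^\infty_\nu$ recorded in Section \ref{intro}. For part $(i)$, the implication $(a)\Rightarrow(b)$ is automatic. For $(b)\Leftrightarrow(c)$ I would first apply Gantmacher's theorem: $T^\phi_g:\mathcal H^0_\nu\to\mathcal H^\infty_\mu$ is weakly compact precisely when $(T^\phi_g)''$ sends $(\mathcal H^0_\nu)''=\mathcal H^\infty_\nu$ into the canonical copy of $\mathcal H^\infty_\mu$ inside $(\mathcal H^\infty_\mu)''$. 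It then remains to check that, on $\mathcal H^\infty_\nu$, the bitranspose is realized by the same integral formula, i.e. coincides with the extension $S:=T^\phi_g:\mathcal H^\infty_\nu\to\mathcal H^\infty_\mu$, which is bounded by Theorem \ref{th1}. I would verify this by a $w^*$-density argument: given $F\in\mathcal H^\infty_\nu$, take a bounded net $f_\alpha\in\mathcal H^0_\nu$ with $f_\alpha\to F$ in $\sigma(\mathcal H^\infty_\nu,A^1_\omega)$ (equivalently, boundedly and uniformly on compacta, since the predual $A^1_\omega$ contains the evaluation kernels $K_\zeta$ of Section \ref{intro}); then $(T^\phi_g)''f_\alpha=T^\phi_g f_\alpha=Sf_\alpha$, and testing against the point evaluations $\delta_\zeta$, using that $S$ is continuous for uniform convergence on compacta, gives $((T^\phi_g)''F)(\zeta)=(SF)(\zeta)$ for every $\zeta\in\mathbb D$. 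This yields $(b)\Leftrightarrow(c)$.

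The decisive step is $(b)\Rightarrow(a)$, and here I would use nothing about the symbols $g,\phi$ but only that the domain $\mathcal H^0_\nu\simeq c_0$ has the Dunford--Pettis property and contains no isomorphic copy of $l_1$. A weakly compact operator out of a Dunford--Pettis space is completely continuous, hence maps weakly Cauchy sequences to norm-convergent ones; by Rosenthal's $l_1$-theorem every bounded sequence in $c_0$ has a weakly Cauchy subsequence, so the image of the unit ball of $\mathcal H^0_\nu$ is relatively norm-compact. Thus weak compactness forces compactness and $(b)\Rightarrow(a)$ holds; the relevant Banach-space facts are collected in \cite{FTL}.

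For part $(ii)$, boundedness of $T^\phi_g:\mathcal H^0_\nu\to\mathcal H^0_\mu$ is read off from Theorem \ref{th1}$(i)$: the standing hypothesis forces condition (\ref{aak101}), and together with $g\in\mathcal B^0_{(1-|z|^2)\mu(z)}$ this is exactly the stated criterion there. The identity $(T^\phi_g)''=T^\phi_g$ follows from the same $w^*$-density argument as above, now with codomain $\mathcal H^0_\mu$; here it is even cleaner, since $(\mathcal H^0_\mu)''=\mathcal H^\infty_\mu$ makes $(T^\phi_g)''$ land in $\mathcal H^\infty_\mu$ automatically and be $w^*$-$w^*$ continuous. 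Finally, for the compactness criterion I would combine the two mechanisms: since $\mathcal H^0_\nu\simeq c_0$, compactness and weak compactness of $T^\phi_g:\mathcal H^0_\nu\to\mathcal H^0_\mu$ coincide by the same $c_0$-argument, and Gantmacher's theorem identifies weak compactness with $(T^\phi_g)''(\mathcal H^\infty_\nu)\subseteq\mathcal H^0_\mu$; replacing $(T^\phi_g)''$ by $T^\phi_g$ via the identity just proved turns this into $T^\phi_g(\mathcal H^\infty_\nu)\subseteq\mathcal H^0_\mu$, as claimed. The main obstacle throughout is the bookkeeping that realizes the abstract bitranspose as the concrete integral operator---the $w^*$-continuity and density verification---after which the compact/weakly-compact dichotomy is a formal consequence of the $c_0$-structure.
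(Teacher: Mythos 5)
Your proof is correct, and its overall architecture --- Gantmacher's theorem plus a $w^*$-density identification of the bitranspose with the concrete integral operator, combined with the isomorphism $\mathcal H^0_\nu \simeq c_0$ --- is the same as the paper's; in particular your part $(ii)$ matches the paper's proof essentially step for step (the paper delegates the identity $(T^\phi_g)''=T^\phi_g$ to Lemma 1 of \cite{Ba14}, which is established by exactly the density argument you spell out). The one genuinely different ingredient is your proof that weak compactness implies compactness. The paper obtains this by rewriting $T^\phi_g$ as the weighted composition operator $M_{g'}C_\phi$ via the identification $\mathcal H^\infty_\mu = \mathcal B_{(1-|z|^2)\mu(z)}$ and quoting Theorem 1 of \cite{JME1}; you instead argue directly that $\mathcal H^0_\nu\simeq c_0$ has the Dunford--Pettis property (so a weakly compact operator out of it is completely continuous) and contains no copy of $l_1$ (so, by Rosenthal's theorem, every bounded sequence has a weakly Cauchy subsequence), whence the image of the unit ball is relatively norm compact. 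Your route is self-contained modulo classical Banach space theory and is more general: it applies to any bounded operator with domain $\mathcal H^0_\nu$, makes no use of the symbols $g$ and $\phi$, and in effect inlines the proof of the result the paper cites (which rests on the same $c_0$ facts). What the paper's route buys is brevity and reuse: the reduction to $M_{g'}C_\phi$ is the same device it employs throughout (for instance in Theorem \ref{th1}), and the citation spares the Banach-space digression. A last, purely organizational difference: the paper closes the cycle as $(a)\Rightarrow(c)\Rightarrow(b)$, proving $(a)\Rightarrow(c)$ by Gantmacher--Nakamura and $(c)\Rightarrow(b)$ by the Dunford--Schwartz criterion $T''(X'')\subset Y$, while you run $(b)\Leftrightarrow(c)$ through Gantmacher in both directions; the content is the same.
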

\begin{proof} $(i)$ $(a) \Leftrightarrow (b)$ Since $ \mathcal H^{\infty}_{\mu} = \mathcal B_{(1 - |z|^2)\mu(z)}$, so $T_{g}^{\phi}f \in H^{\infty}_{\mu}$ if and only if $M_{g'}C_\phi f \in H^{\infty}_{(1 - |z|^2)\mu}.$ Moreover, $T_{g}^{\phi}f$ and $M_{g'}C_\phi f$ have comparable norm. Thus compactness (and/or weak compactness)  of $T_{g}^{\phi}: \mathcal H^{0}_{\nu} \longrightarrow  \mathcal H^{\infty}_{\mu}$   is equivalent to  compactness (and/or weak compactness) of $M_{g'}C_{\phi}: \mathcal H^{0}_{\nu} \longrightarrow  \mathcal H^{\infty}_{\mu}.$ By Theorem 1 in \cite{JME1}, $M_{g'}C_\phi f$ is compact if and only if it is weakly compact. Therefore, it follows that $(a) $ and $ (b)$ are equivalent.\\
$(a) \Rightarrow (c)$ Assume that $(a)$ holds. Then by Gantmacher-Nakamura's theorem, see Theorem 5.5 in \cite{Co90}, the bi-transpose  $(T_{g}^{\phi})''$ of $T_{g}^{\phi}: \mathcal H^{0}_{\nu} \longrightarrow  \mathcal H^{\infty}_{\mu}$ acts continuously from  $\mathcal H^{\infty}_{\nu}$ with $w^*$-topology to $\mathcal H^{\infty}_{\mu}$ with $w^*$-topology. Since pointwise topology $\tau_p$  is weaker than  $w^*$-topology on $\mathcal H^{\infty}_{\nu}$ and  $\mathcal H^{\infty}_{\mu}$, so $(T_{g}^{\phi})''$ and $T_{g}^{\phi}$ are both $w^*$ and $\tau_p$ continuous and they agree in   $\mathcal H^{0}_{\nu}$ which is $w^*$ dense in $\mathcal H^{\infty}_{\nu}.$ Therefore, $(c)$ holds.\\ $(c) \Rightarrow (b)$ Assume that the bi-transpose $(T_{g}^{\phi})''$ of $T_{g}^{\phi}: \mathcal H^{0}_{\nu} \longrightarrow  \mathcal H^{\infty}_{\mu}$  satisfies $(T^\phi_{g})'' = T^\phi_{g}$. Then it holds that $(T^\phi_{g})''(\mathcal H^{\infty}_{\nu}) \subset \mathcal H^{\infty}_{\mu}.$ Using the fact that a bounded linear operator $T$ acting from a Banach space $X$ to a Banach space $Y$ is weakly compact if and only if $T''(X'') \subset Y,$ see p. 482 in \cite{Du58}, we have that   $T_{g}^{\phi}: \mathcal H^{0}_{\nu} \longrightarrow  \mathcal H^{\infty}_{\mu}$ is weakly compact. \\
$(ii)$ If $ g \in \mathcal B^{0}_{(1-|z|^{2})\mu(z)} $, then by Theorem \ref{th1}, $T^\phi_{g}  : \mathcal H_{\nu}^{0} \rightarrow  \mathcal  H_{\mu}^{0}$  is bounded. Moreover, by  replacing $T_{g}$ in Lemma 1 in  \cite{Ba14} by $T^\phi_{g}$, we can easily prove that   $(T^\phi_{g})'' = T^\phi_{g}$. Finally, by  Gantmacher-Nakamura's theorem it holds that the compactness  of $T^\phi_{g}  : \mathcal H_{\nu}^{0} \rightarrow  \mathcal  H_{\mu}^{0}$  is equivalent to $T^\phi_{g}(H^{\infty}_{\nu}) \subset  \mathcal H^{0}_{\mu}.$
 This completes the proof.  \end{proof}
\begin{thm} \label{lm2}  Let $\nu, \mu \in N_W(\mathbb D).$ If $g \in H(\mathbb D)$ and $\phi$ a self-map of $\mathbb D$ such that $S_{g}^{\phi}: \mathcal H^{0}_{\nu} \longrightarrow  \mathcal H^{\infty}_{\mu}$ is bounded, then   \begin{enumerate}
\item [{\rm($i$)}] The following are equivalent. \begin{enumerate}
\item[{\rm($a$)}]  $S_{g}^{\phi}: \mathcal H^{0}_{\nu} \longrightarrow  \mathcal H^{\infty}_{\mu}$ is compact.
\item[{\rm($b$)}]  $S_{g}^{\phi}: \mathcal H^{0}_{\nu} \longrightarrow  \mathcal H^{\infty}_{\mu}$ is weakly compact.
\item[{\rm($c$)}]  The bi-transpose $(S_{g}^{\phi})''$ of $S_{g}^{\phi}: \mathcal H^{0}_{\nu} \longrightarrow  \mathcal H^{\infty}_{\mu}$  satisfies $(S^\phi_{g})'' = S^\phi_{g}$. \end{enumerate}
\item [{\rm($ii$)}] If $ g \in \mathcal H^{0}_{(1-|z|^{2})\mu(z)} $, then $S^\phi_{g}  : \mathcal H_{\nu}^{0} \rightarrow  \mathcal  H_{\mu}^{0}$  is bounded and the bi-transpose $(S^\phi_{g})''$ of $S^\phi_{g}$ satisfies $(S^\phi_{g})'' = S^\phi_{g}$. Moreover, $S^\phi_{g}  : \mathcal H_{\nu}^{0} \rightarrow  \mathcal  H_{\mu}^{0}$  is compact if and only if $S^\phi_{g}(H^{\infty}_{\nu}) \subset  \mathcal H^{0}_{\mu}.$
\end{enumerate} \end{thm}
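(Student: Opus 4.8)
The plan is to mirror, line for line, the argument already given for Theorem \ref{lm1}, with the weighted composition operator $M_{g'}C_\phi$ replaced throughout by $M_g C_\phi D$, where $D$ is differentiation and $M_g$, $C_\phi$ are the multiplication and composition operators. The reduction is the same: since $\mu \in N_W(\mathbb{D})$ forces $\mathcal H^\infty_\mu = \mathcal B_{(1-|z|^2)\mu(z)}$, and since $(S^\phi_g f)(0) = 0$ with $(S^\phi_g f)'(z) = g(z)\, f'(\phi(z)) = (M_g C_\phi D f)(z)$, one checks that $S^\phi_g f \in \mathcal H^\infty_\mu$ exactly when $M_g C_\phi D f \in \mathcal H^\infty_{(1-|z|^2)\mu(z)}$ and that the two have comparable norms. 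Hence compactness, weak compactness, and the bi-transpose behaviour of $S^\phi_g : \mathcal H^0_\nu \to \mathcal H^\infty_\mu$ are all equivalent to the corresponding properties of $M_g C_\phi D : \mathcal H^0_\nu \to \mathcal H^\infty_{(1-|z|^2)\mu(z)}$, and part $(i)$ becomes a statement about this operator.

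For part $(i)$ I would argue as follows. The equivalence $(a) \Leftrightarrow (b)$ is really a property of the domain: $\mathcal H^0_\nu \simeq c_0$, and every weakly compact operator out of (a space isomorphic to) $c_0$ is compact, so one may either invoke the analogue of Theorem 1 in \cite{JME1} for $M_g C_\phi D$ or appeal directly to this general principle. The implication $(a) \Rightarrow (c)$ is obtained from the Gantmacher--Nakamura theorem (Theorem 5.5 in \cite{Co90}): compactness makes $(S^\phi_g)''$ act continuously from $(\mathcal H^\infty_\nu, w^*)$ into $(\mathcal H^\infty_\mu, w^*)$; since the pointwise topology is coarser than the $w^*$-topology on both spaces, $(S^\phi_g)''$ and $S^\phi_g$ are each $w^*$- and $\tau_p$-continuous and agree on the $w^*$-dense subspace $\mathcal H^0_\nu$, whence $(S^\phi_g)'' = S^\phi_g$. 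For $(c) \Rightarrow (b)$, the identity $(S^\phi_g)'' = S^\phi_g$ gives $(S^\phi_g)''(\mathcal H^\infty_\nu) \subset \mathcal H^\infty_\mu$, and the criterion that a bounded operator $T : X \to Y$ is weakly compact iff $T''(X'') \subset Y$ (p.~482 in \cite{Du58}) completes the cycle.

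For part $(ii)$, the hypothesis that $S^\phi_g : \mathcal H^0_\nu \to \mathcal H^\infty_\mu$ is bounded supplies condition (\ref{aak102}) via Theorem \ref{th1}$(ii)$; adding $g \in \mathcal H^0_{(1-|z|^2)\mu(z)}$ then gives, again by Theorem \ref{th1}$(ii)$, boundedness of $S^\phi_g : \mathcal H^0_\nu \to \mathcal H^0_\mu$. To establish $(S^\phi_g)'' = S^\phi_g$ I would transcribe Lemma 1 of \cite{Ba14} with $T_g$ replaced by $S^\phi_g$, verifying the identity first on the reproducing kernels $K_\zeta$ (using the Shields--Williams pairing (\ref{1a1}) and $f(\zeta) = (f, K_\zeta)$) and then extending by $\tau_p$-continuity and $w^*$-density of $\mathcal H^0_\nu$ in $\mathcal H^\infty_\nu$. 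Finally, Gantmacher--Nakamura turns compactness of $S^\phi_g : \mathcal H^0_\nu \to \mathcal H^0_\mu$ into the range condition $(S^\phi_g)''(\mathcal H^\infty_\nu) = S^\phi_g(H^\infty_\nu) \subset \mathcal H^0_\mu$, as claimed.

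The step I expect to be the genuine obstacle is the bi-transpose identity $(S^\phi_g)'' = S^\phi_g$ in part $(ii)$: because $S^\phi_g$ differentiates its argument, one must compute the action of $(S^\phi_g)''$ on an arbitrary $f \in \mathcal H^\infty_\nu$ through the duality pairing and confirm that differentiation passes through the pairing correctly, so that the formal identification with $M_g C_\phi D$ is legitimate; everything else is a faithful adaptation of the proof of Theorem \ref{lm1}.
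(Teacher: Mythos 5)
Your proposal is correct and follows essentially the same route as the paper's proof: both reduce $S^\phi_g$ to $M_g C_\phi D$ via the identification $\mathcal H^\infty_\mu = \mathcal B_{(1-|z|^2)\mu(z)}$ and then rerun the argument of Theorem \ref{lm1} (Gantmacher--Nakamura for $(a)\Rightarrow(c)$, the bidual-range criterion for $(c)\Rightarrow(b)$, Theorem \ref{th1} for boundedness in $(ii)$, an adapted bi-transpose lemma, and Gantmacher--Nakamura again for the final equivalence). The only cosmetic differences are that the paper removes $D$ by a further change of weight so it can quote Theorem 1 of \cite{JME1} for a weighted composition operator, where you instead invoke the equivalent general fact that weakly compact operators out of $\mathcal H^0_\nu \simeq c_0$ are compact, and that the paper obtains $(S^\phi_g)'' = S^\phi_g$ by adapting Lemma 2.12 of \cite{QL} rather than Lemma 1 of \cite{Ba14}.
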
 \begin{proof} (i) Proceeding as in Theorem \ref{lm1},   $S_{g}^{\phi}f \in H^{\infty}_{\mu}$ if and only if $M_{g'}C_\phi D f \in H^{\infty}_{(1 - |z|^2)\mu(z)}$ with comparable norm. Thus compactness (and/or weak compactness)  of $T_{g}^{\phi}: \mathcal H^{0}_{\nu} \longrightarrow  \mathcal H^{\infty}_{\mu}$   is equivalent to  compactness (and/or weak compactness) of $M_{g'}C_{\phi} D : \mathcal H^{0}_{\nu} \longrightarrow  \mathcal H^{\infty}_{\mu}$ which is further equivalent to is   compactness (and/or weak compactness) of $M_{g'}C_{\phi}  : \mathcal H^{0}_{(1 - |z|^2)\mu(z)} \longrightarrow  \mathcal H^{\infty}_{\mu}.$ Thus  the first part of theorem can now be settles as the proof of first part of  Theorem \ref{lm1}.\\
(ii) If $ g \in \mathcal H^{0}_{(1-|z|^{2})\mu(z)} $, then by Theorem \ref{th1}, $S^\phi_{g}  : \mathcal H_{\nu}^{0} \rightarrow  \mathcal  H_{\mu}^{0}$  is bounded. Moreover, by  replacing $S_{g}$ in Lemma 2.12 in  \cite{QL}, we can easily prove that   $(S^\phi_{g})'' = S^\phi_{g}$. The last part can be settle by Gantmacher-Nakamura's theorem as in the second part of  Theorem \ref{lm1}.\end{proof}
Finally, we recall  Piestsch's theorem which plays an important role in the proof of main results of this paper. For more about the Piestsch's theorem, we refer the readers to Theorem 2.12 of \cite{DJT}.
\begin{thm}\label{th2}(Pietsch's Theorem) Let $E$ and $F$ be Banach spaces and $1\leq p <+\infty$. A bounded linear operator $T: E \longrightarrow F$ is absolutely p-summing if and only if there is a constant $C$ and a regular Borel probability measure $\varrho$ on closed unit ball $B_{E^*}$ of $E^*$ with the weak$^*$-topology $\sigma (E^*, E)$ such that for every $x\in E$,$$\|T(x)\|\leq C\left(\int_{B_{E^{*}}}|\phi(x)|^{p}d\varrho(\phi)\right)^{1/p}.$$
\end{thm}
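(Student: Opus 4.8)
The plan is to prove the two implications separately, with the reverse (sufficiency) implication being routine and the forward (necessity) implication resting on a Hahn--Banach separation argument. Throughout I would write $K = B_{E^*}$, which by Alaoglu's theorem is a compact Hausdorff space in the weak$^*$-topology $\sigma(E^*,E)$; then $C(K)$ is a Banach space whose dual, by the Riesz representation theorem, is the space of regular Borel measures on $K$, and the positive normalized functionals correspond exactly to regular Borel probability measures on $K$. Fixing this dictionary between positive functionals and probability measures is what makes the whole argument go through.

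For sufficiency, suppose a constant $C$ and a probability measure $\varrho$ as in the statement exist. Given any finite family $x_1,\dots,x_n \in E$, I would raise the domination inequality to the $p$-th power, sum over $i$, and interchange the finite sum with the integral to obtain
$$\sum_{i=1}^n \|Tx_i\|^p \le C^p \int_K \sum_{i=1}^n |\phi(x_i)|^p\, d\varrho(\phi) \le C^p \sup_{\phi \in K}\sum_{i=1}^n |\phi(x_i)|^p,$$
where the last step uses that $\varrho$ is a probability measure. This is exactly the definition of $T$ being absolutely $p$-summing, with $p$-summing norm at most $C$.

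The necessity direction is the main step. Assume $T$ is $p$-summing and let $\pi$ denote its $p$-summing norm. I would consider in $C(K)$ the family $P$ of all functions of the form
$$f(\phi) = \sum_{i=1}^n \bigl(\|Tx_i\|^p - \pi^p |\phi(x_i)|^p\bigr),$$
as $(x_i)_{i=1}^n$ ranges over all finite subsets of $E$, and check that $P$ is a convex cone. The defining $p$-summing inequality says precisely that $\min_{\phi \in K} f(\phi) \le 0$ for each such $f$, so $P$ is disjoint from the open convex cone $P^+ = \{h \in C(K) : h(\phi) > 0 \text{ for all } \phi \in K\}$, which is nonempty since it contains the positive constants. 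The Hahn--Banach separation theorem then yields a nonzero $\Lambda \in C(K)^*$ with $\Lambda(f) \le 0 \le \Lambda(h)$ for all $f \in P$ and $h \in P^+$. Because $P^+$ consists of strictly positive functions, $\Lambda$ is a positive functional, so by Riesz representation it is given by a nonzero positive regular Borel measure, which I would normalize to a probability measure $\varrho$. Applying $\Lambda(f) \le 0$ to the one-term families $f(\phi) = \|Tx\|^p - \pi^p|\phi(x)|^p$ gives
$$\|Tx\|^p \le \pi^p \int_K |\phi(x)|^p\, d\varrho(\phi)$$
for every $x \in E$, and taking $p$-th roots yields the domination with $C = \pi$.

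The hard part will be the necessity direction, and within it the two delicate points are (i) verifying that $P^+$ is genuinely open and nonempty in $C(K)$ so that the separation theorem applies, and (ii) arguing that the separating functional is positive and can be normalized, which is exactly where the compactness of $K$ furnished by Alaoglu and the Riesz representation theorem are indispensable. By contrast, the interchange of finite sum and integral, the convexity of $P$, and the implication $\min_\phi f \le 0 \Rightarrow P \cap P^+ = \emptyset$ are all routine once the functional-analytic framework on $K = B_{E^*}$ is set up.
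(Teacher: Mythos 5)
Your proposal is correct, but there is essentially nothing in the paper to compare it against: the paper does not prove this statement at all. It is recalled as a known classical result (Pietsch's domination theorem), the reader being referred to Theorem 2.12 of \cite{DJT}, and it is then invoked as a black box in the proofs of Theorems \ref{th3} and \ref{th4}. What you have written out is the canonical proof found in that reference and in most textbooks: sufficiency by integrating the $p$-th power of the domination inequality against the probability measure $\varrho$, and necessity by Hahn--Banach separation in $C(K)$ with $K=B_{E^*}$ weak$^*$-compact by Alaoglu. Your handling of the delicate points is sound: the family $P$ is a convex cone because positive scalars $\lambda$ can be absorbed by rescaling the vectors $x_i$ to $\lambda^{1/p}x_i$; $P^+$ is open precisely because $K$ is compact, so a strictly positive $h\in C(K)$ is bounded below away from $0$; disjointness $P\cap P^+=\emptyset$ is exactly the $p$-summing inequality (the minimum in question being attained since each $f\in P$ is weak$^*$-continuous on the compact $K$); and the separating functional is positive since every $g\ge 0$ is a uniform limit of the strictly positive functions $g+\epsilon$, after which Riesz representation and normalization produce $\varrho$ and yield the domination with $C=\pi_p(T)$. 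In short, your proof fills in, along the standard lines, precisely the argument the paper delegates to the literature; the only thing it ``buys'' relative to the paper is self-containedness.
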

\begin{thm}\label{th3}  Let $\nu, \mu \in N_W(\mathbb D),$  $\alpha > -1$ and $\omega$ a weight function such that  $\{\nu, \omega\}$ is a normal pair and {\rm(}\ref{1b1}{\rm)} holds. If $g \in H(\mathbb D)$ and $\phi$ a self-map of $\mathbb D$ be such that  $T_{g}^{\phi}: \mathcal H^{\infty}_{\nu} \longrightarrow  \mathcal H^{\infty}_{\mu}$ is bounded, then the following statements are equivalent.
\begin{enumerate}
\item [{(a)}] $T_{g}^{\phi}: \mathcal H^{0}_{\nu} \longrightarrow  \mathcal H^{\infty}_{\mu}$ is nuclear.
\item [{(b)}] $T_{g}^{\phi}: \mathcal H^{\infty}_{\nu} \longrightarrow  \mathcal H^{\infty}_{\mu}$ is nuclear.
\item [{(c)}] $g$, $\nu,$ $\mu$ and $\phi $  satisfy the following condition:$$M_\alpha = \int_{\mathbb{D}}\bigg [\sup_{z\in \mathbb{D}}\frac{(1-|z|^{2})\mu(z)|g'(z)|}{|1-\bar{\zeta}\phi(z)|^{\alpha + 2}}\bigg ]\frac{(1-|\zeta|^{2})^\alpha}{\nu(\zeta)}dA(\zeta) < \infty.$$
\end{enumerate}
\end{thm}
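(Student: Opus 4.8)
The plan is to run the implications $(b)\Rightarrow(a)\Rightarrow(c)\Rightarrow(b)$, exploiting two structural facts recalled above: that $\mathcal H^0_\nu\simeq c_0$ and $\mathcal H^\infty_\nu\simeq l_\infty$, so that on either space \emph{nuclear} and \emph{absolutely summing} coincide, and that $T^\phi_g f$ and $M_{g'}C_\phi f$ have comparable $\mathcal H^\infty_\mu$-norms, whence, since $\mathcal H^\infty_\mu=\mathcal B_{(1-|z|^2)\mu(z)}$ and $(T^\phi_g f)(0)=0$,
$$\|T^\phi_g f\|_{\mathcal H^\infty_\mu}\asymp\sup_{z\in\mathbb D}(1-|z|^2)\mu(z)|g'(z)|\,|f(\phi(z))|.$$
The implication $(b)\Rightarrow(a)$ is immediate: $\mathcal H^0_\nu$ is a closed subspace of $\mathcal H^\infty_\nu$ carrying the same norm, and the restriction of a nuclear operator to a subspace is nuclear. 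The whole content therefore lies in $(a)\Rightarrow(c)$ and $(c)\Rightarrow(b)$, and both are obtained from Pietsch's Theorem (Theorem \ref{th2}) with $p=1$, the decisive observation being that the reproducing kernels realise the supremum appearing inside $M_\alpha$. Writing $\omega(\zeta)=(1-|\zeta|^2)^\alpha/\nu(\zeta)$ (the normal-pair identity) and using $|K_{\bar\zeta}(\phi(z))|=(\alpha+1)/|1-\bar\zeta\phi(z)|^{\alpha+2}$, the displayed estimate gives
$$\|T^\phi_g K_{\bar\zeta}\|_{\mathcal H^\infty_\mu}\asymp(\alpha+1)\sup_{z\in\mathbb D}\frac{(1-|z|^2)\mu(z)|g'(z)|}{|1-\bar\zeta\phi(z)|^{\alpha+2}},$$
so that $M_\alpha\asymp\int_{\mathbb D}\|T^\phi_g K_{\bar\zeta}\|_{\mathcal H^\infty_\mu}\,\omega(\zeta)\,dA(\zeta)$.

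For $(c)\Rightarrow(b)$ I would build the Pietsch majorant by hand. Applying the reproducing formula $f(\phi(z))=(f,K_{\phi(z)})$, valid on all of $\mathcal H^\infty_\nu$, and the triangle inequality under the integral, for every $z$ one has
$$(1-|z|^2)\mu(z)|g'(z)|\,|f(\phi(z))|\le(\alpha+1)\int_{\mathbb D}|f(s)|\,h(s)\,(1-|s|^2)^\alpha\,dA(s),$$
where $h(s)=\sup_{z}(1-|z|^2)\mu(z)|g'(z)|/|1-\bar s\phi(z)|^{\alpha+2}$; the right-hand side no longer depends on $z$, so taking the supremum yields $\|T^\phi_g f\|_{\mathcal H^\infty_\mu}\lesssim\int_{\mathbb D}|\psi_s(f)|\,h(s)\,\omega(s)\,dA(s)$ with $\psi_s(f)=\nu(s)f(s)$, a functional of norm at most $1$. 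Since the total mass of $h\,\omega\,dA$ is exactly $M_\alpha<\infty$, pushing this measure forward under the $w^*$ continuous map $s\mapsto\psi_s$ produces a regular Borel probability measure $\varrho$ on $B_{(\mathcal H^\infty_\nu)^*}$ for which $\|T^\phi_g f\|_{\mathcal H^\infty_\mu}\le CM_\alpha\int|\psi(f)|\,d\varrho(\psi)$. Theorem \ref{th2} then makes $T^\phi_g$ absolutely summing, hence nuclear because $\mathcal H^\infty_\nu\simeq l_\infty$; the identical computation with $\psi_s$ viewed on $\mathcal H^0_\nu$ also yields $(c)\Rightarrow(a)$ directly.

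The crux is $(a)\Rightarrow(c)$, which runs the Pietsch inequality in reverse. Nuclearity of $T^\phi_g:\mathcal H^0_\nu\to\mathcal H^\infty_\mu$ forces it to be absolutely summing, so Theorem \ref{th2} supplies $C>0$ and a regular Borel probability measure $\varrho$ on $(B_{A^1_\omega},w^*)=(B_{(\mathcal H^0_\nu)^*},w^*)$ with $\|T^\phi_g f\|_{\mathcal H^\infty_\mu}\le C\int_{B_{A^1_\omega}}|\psi(f)|\,d\varrho(\psi)$ for all $f\in\mathcal H^0_\nu$. I would feed into this the whole family $f=K_{\bar\zeta}\in\mathcal H^0_\nu$ and integrate in $\zeta$ against $\omega(\zeta)\,dA(\zeta)$; every term being nonnegative, Tonelli's theorem legitimises the interchange and gives
$$\int_{\mathbb D}\|T^\phi_g K_{\bar\zeta}\|_{\mathcal H^\infty_\mu}\,\omega(\zeta)\,dA(\zeta)\le C\int_{B_{A^1_\omega}}\Big(\int_{\mathbb D}|\psi(K_{\bar\zeta})|\,\omega(\zeta)\,dA(\zeta)\Big)\,d\varrho(\psi).$$
Here each $\psi\in B_{A^1_\omega}$ is paired, via \eqref{1a1}, with some $h\in A^1_\omega$, $\|h\|_{A^1_\omega}\lesssim1$, and the reproducing identity $\psi(K_{\bar\zeta})=(K_{\bar\zeta},h)=h(\bar\zeta)$ collapses the inner integral, after the substitution $\zeta\mapsto\bar\zeta$, to $\|h\|_{A^1_\omega}\lesssim1$ uniformly in $\psi$. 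Thus the right-hand side is at most $C'$, while the left-hand side is comparable to $M_\alpha$ by the kernel estimate above, so $M_\alpha<\infty$. The only points requiring care are the $w^*$ measurability and regularity of the Pietsch measures (automatic, since $B_{(\mathcal H^0_\nu)^*}$ is $w^*$ metrizable and the kernel map is $w^*$ continuous) and the bookkeeping of the conjugations forced by the pairing \eqref{1a1}; the genuinely non-formal step is the realisation that testing against the reproducing kernels and integrating turns the abstract Pietsch majorant into precisely the integral $M_\alpha$.
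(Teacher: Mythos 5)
Your proposal is correct, and two of your three implications coincide with the paper's own: $(b)\Rightarrow(a)$ by restricting a nuclear operator to the closed subspace $\mathcal H^{0}_{\nu}$, and $(a)\Rightarrow(c)$ by running Pietsch's inequality against the kernels $K_{\bar\zeta}$, integrating in $\zeta$ against $\omega\,dA$, and collapsing the inner integral by the reproducing property (your conjugation bookkeeping here is actually more careful than the paper's, which slides between $K_{\zeta}$ and $K_{\bar\zeta}$). Where you genuinely diverge is the sufficiency of $M_\alpha$: the paper proves $(c)\Rightarrow(a)$ by verifying the definition of absolute summability directly --- take finitely many polynomials (dense in $\mathcal H^{0}_{\nu}$), choose points $z_i$ nearly attaining the suprema, expand $f_{i}(\phi(z_{i}))=(f_{i},K_{\phi(z_{i})})$ via the pairing (\ref{1a1}), and use $\sup_{w}\sum_{i}|f_{i}(w)|\nu(w)=\sup_{|\eta_{i}|=1}\|\sum_{i}\eta_{i}f_{i}\|_{\mathcal H^{\infty}_{\nu}}$ --- and then transfers $(a)$ to $(b)$ through the bi-transpose machinery of Theorem \ref{lm1} together with the fact that transposes of nuclear operators are nuclear. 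You instead prove $(c)\Rightarrow(b)$ in one step by constructing an explicit Pietsch majorant (the pushforward of $h\,\omega\,dA/M_\alpha$ under the point evaluations $s\mapsto\psi_{s}$) and invoking the converse direction of Theorem \ref{th2} plus the absolutely-summing-iff-nuclear equivalence on a domain isomorphic to $l_{\infty}$. Your route buys a cleaner cycle $(b)\Rightarrow(a)\Rightarrow(c)\Rightarrow(b)$ that needs neither Theorem \ref{lm1} nor density of polynomials; the paper's route uses only the ``necessity'' half of Pietsch's theorem and avoids any measure construction. One caveat: your parenthetical about $w^{*}$-metrizability covers only $B_{A^{1}_{\omega}}=B_{(\mathcal H^{0}_{\nu})'}$; the ball $B_{(\mathcal H^{\infty}_{\nu})'}$ is not $w^{*}$-metrizable, so for the pushforward you should either observe that the image of a Radon measure under a $w^{*}$-continuous map into a compact Hausdorff space is again a regular Borel measure, or bypass Pietsch altogether by summing your pointwise domination inequality over a finite family $f_{1},\dots,f_{N}$, which yields absolute summability of $T_{g}^{\phi}$ on $\mathcal H^{\infty}_{\nu}$ directly.
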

\begin{proof}
$(c)\Rightarrow (a)$ Since $\mathcal H^{0}_{\nu} \simeq c_{0}$, so $T_{g}^{\phi}:  \mathcal H^{0}_{\nu} \longrightarrow \mathcal H^{\infty}_{\mu}$ is nuclear if and only if $T_{g}^{\phi}:  \mathcal H^{0}_{\nu} \longrightarrow \mathcal H^{\infty}_{\mu}$ is absolutely summing. Thus to complete the proof of the implication $(c)\Rightarrow (a)$, we need to  prove that $T_{g}^{\phi}:  \mathcal H^{0}_{\nu} \longrightarrow \mathcal H^{\infty}_{\mu}$ is absolutely summing. First note that if $\mu$ is normal,  $\mathcal H^\infty_{\mu(z)} = \mathcal{B}_{\mu(z)(1-|z|^{2})}$, $\|T_{g}^{\phi}(f)\|_{\mathcal H^{\infty}_{\mu}} \asymp \|T_{g}^{\phi}(f)\|_{\mathcal{B}_{\mu(z)(1-|z|^{2})}} $ for any $f \in H^\infty_{\nu}$ and $T_{g}^{\phi}(f)$ vanishes at origin for any $f \in H(\mathbb D)$.
%By  corollary $1.5$ in \cite{HBK},  if $f  \in \mathcal  B,$   then we have that
%\begin{equation}\label{1e}
  %f'(z)=2\int_{\mathbb{D}}\frac{1-|w|^2}{(1-z\bar{w})^3}f'(w)dA(w).
%\end{equation}
Now polynomials are dense in $\mathcal H^{0}_{\mu}$, so it is sufficient to consider   polynomials $f_{1},f_{2},\cdots,f_{N}$ in the definition of absolutely summing.
% and apply Lemma 4.2.8 in \cite{KZ}  to each $p'_{i}$, $i = 1, 2, \cdots N,$ we have that
%Since $\mathcal H^\infty_{\mu(z)} = \mathcal{B}_{\mu(z)(1-|z|^{2})}$, $\|T_{g}^{\phi}(f)\|_{\mathcal H^\infty_{\mu}} \asymp \|T_{g}^{\phi}(f)\|_{\mathcal{B}_{\mu(z)(1-|z|^{2})}} $ for any $f \in H^\infty_{\nu}$ and $T_{g}^{\phi}(f)$ vanishes at origin for any $f \in H(\mathbb D)$, so
Now for any $C > 1,$ we can select $z_{i}$, $i = 1, 2, \cdots N$ such that  \begin{align} \sum_{i=1}^{N}\|T_{g}^{\phi} f_i\|_{\mathcal H^\infty_{\mu}} &\asymp  \displaystyle\sum_{i=1
}^{N}\left(\big|(T_{g}^{\phi}f_i)(0)\big|+\sup_{z\in \mathbb{D}}(1-|z|^{2})\mu(z)\big|(T_{g}^{\phi} f_{i})'(z)\big|\right) \notag \\
&=  \sum_{i=1}^{N}\displaystyle\sup_{z\in \mathbb{D}}(1-|z|^{2})\mu(z) \big|f_{i}(\phi(z)) g'(z)\big| \notag \\
&\le  C\displaystyle\sum_{i=1}^{N}(1-|z_{i}|^{2})\mu(z_i) \big|f_{i}(\phi(z_{i})) g'(z_{i})\big| \label{nvcx1}\end{align}
Using the duality pairing in (\ref{1a1}), we have that
\begin{align*} \big|f_{i}(\phi(z_{i}))\big| = \big|(f_{i},  K_{\phi(z_{i})}) \big | = \bigg | (\alpha +1)\int_{\mathbb{D}} f_{i}(\zeta)\frac{(1-|\zeta|^{2})^\alpha  }{|1-\bar{\zeta}\phi(z_i)|^{\alpha + 2}} dA(\zeta) \bigg |. \end{align*}  Thus from (\ref{nvcx1}), we have that
\begin{align}
\sum_{i=1}^{N}\|T_{g}^{\phi} f_i\|_{_{\mathcal H^\infty_{\mu}}} & \leq   C \bigg (\sup_{w \in\mathbb{D}}\displaystyle\sum_{i=1}^{N}\big|f_{i}(w)\big| \nu(w) \bigg ) \notag \\ & \quad \times \int_{\mathbb{D}}\sup_{z\in\mathbb{D}}\frac{(1-|z|^{2})\mu(z)\big|g'(z)\big|}{\big|1-\bar{\zeta}\phi(z)\big|^{\alpha + 2}}\frac{(1-|\zeta|^{2})^\alpha}{\nu(\zeta)}dA(\zeta) \notag \\ & =  C \bigg (\sup_{w \in\mathbb{D}}\displaystyle\sum_{i=1}^{N}\big|f_{i}(w)\big| \nu(w) \bigg )M_\alpha\label{nvcw2}
 \end{align}
 Using the equality $$ \sup_{w \in\mathbb{D}} \sum_{i=1}^{N}  |f_{i}(w)|\nu(w)  = \sup_{|\eta_{i}| = 1}\sup_{w \in\mathbb{D}} \bigg|\displaystyle\sum_{i=1}^{N}\eta_{i}  f_{i}(w) \bigg | \nu(w) = \sup_{|\eta_{i}| = 1}\bigg \|\displaystyle\sum_{i=1}^{N}\eta_{i}  f_{i}\bigg \|_{\mathcal H^{\infty}_{\mu}}$$ in (\ref{nvcw2}), we have that
 \begin{align*} \sum_{i=1}^{N}\|T_{g}^{\phi} f_i\|_{_{\mathcal H^{\infty}_{\mu}}} & \leq   C \sup_{|\eta_{i}| = 1}\bigg \|\displaystyle\sum_{i=1}^{N}\eta_{i}  f_{i}\bigg \|_{\mathcal H^{\infty}_{\mu}}.
\end{align*}
Since $ C > 0$ is arbitrary, so $T_{g}^{\phi}:  \mathcal H^{0}_{\nu} \longrightarrow \mathcal H^{\infty}_{\mu}$ is absolutely summing.

$(a)\Rightarrow(c)$ Since every nuclear operator is absolutely summing, therefore, $T_{g}^{\phi}:  \mathcal H^{0}_{\nu} \longrightarrow \mathcal H^{\infty}_{\mu}$ is absolutely summing. By Theorem \ref{th2}, %Pietsch's theorem, see 2.12 of \cite{DJT},
 there is a probability  Borel measure  $\varrho$ on $\sigma (A^{1}_\omega, \mathcal H^{0}_{\mu})$-compact unit ball $\mathbb B_1$ of $A^{1}_\omega$, where $\sigma (A^{1}_\omega, \mathcal H^{0}_{\mu})$ is the weak$^*$-topology on $(\mathcal H^{0}_{\mu})' = A^{1}_\omega,$ and some $\xi$ in $A^{1}_\omega$  such that
\begin{equation}\label{epu1}\|T_{g}^{\phi}f\|_{\mathcal H_{\mu}^{\infty}}\le C\int_{\mathbb B_1}|\xi(f)|d\varrho(\xi)\end{equation}
for every $f\in \mathcal H^{0}_{\mu}$ and some constant $C>0$ independent of $f$.
For every $\zeta \in \mathbb{D}$, we have that  $K_{{\zeta}} \in \mathcal H^{0}_{\mu}$. Therefore,
\begin{align}\label{epu2}
(\alpha + 1) \sup_{z\in\mathbb{D}}\frac{ (1-|z|^{2}) \mu(z)|g'(z)| }{|1- {\zeta}\phi(z)|^{\alpha + 2}}  \leq \|T_{g}^{\phi} K_{ {\zeta}}\|_{\mathcal B_{\mu(z)(1- |z|^2)}}\le C \|T_{g}^{\phi} K_{ {\zeta}}\|_{\mathcal H^{\infty}_{\mu(z) }}.\end{align}
Replacing $f$ in (\ref{epu1}) by $K_{{\zeta}}$, and then using  (\ref{epu2}), we have that
\begin{align*}
 \sup_{z\in\mathbb{D}}\frac{ (1-|z|^{2}) \mu(z)|g'(z)| }{|1- {\zeta}\phi(z)|^{\alpha + 2}}  \leq \frac{C}{\alpha + 1}  \int_{\mathbb B_1}|\xi(K_w)|d\varrho(\xi).\end{align*}
% Let $\alpha > -1$ and $\omega$ a weight function such that  $\{\nu, \omega\}$ is a normal pair and (\ref{1b1}) holds.
 Integrating  over $\mathbb{D}$ with respect to $\omega dA$, using (\ref{1b1}) and then applying Fubini's theorem, we get
\begin{align*}  \int_{\mathbb{D}}  \sup_{z\in\mathbb{D}}& \frac{ (1-|z|^{2}) \mu(z)|g'(z)| }{|1-\bar{\zeta}\phi(z)|^{\alpha + 2}} \frac{(1-|\zeta|^{2})^\alpha}{\nu(\zeta)} dA(\zeta) \notag \\
& \quad \quad \le \frac{C}{\alpha + 1} \int_{\mathbb B_1} \int_{\mathbb{D}}  |\xi(K_\zeta)| \omega(z) dA(\zeta) d\varrho(\xi). \end{align*} Now $K_\zeta \in \mathcal H^{0}_{\mu}$ and $\xi$ in $A^{1}_\omega$, so   $\xi(K_\zeta) = \langle K_\zeta, \xi \rangle =  \xi(w).$ Thus using the fact that $\varrho$ is a probability  Borel measure, we get \begin{align*}   \int_{\mathbb{D}}  \sup_{z\in\mathbb{D}}& \frac{ (1-|z|^{2}) \mu(z)|g'(z)| }{|1-\bar{\zeta}\phi(z)|^{\alpha + 2}} \frac{(1-|\zeta|^{2})^\alpha}{\nu(\zeta)} dA(\zeta) \notag \\
& \quad \quad \le \frac{C}{\alpha + 1} \sup_{\xi \in \mathbb B_1} \int_{\mathbb{D}}  |\xi(\zeta)| \omega(\zeta) dA(\zeta) \notag\\
& \quad \quad = \frac{C}{\alpha + 1} \sup_{\xi \in \mathbb B_1} \|\xi\|_{A^{1}_\omega} = \frac{C}{\alpha + 1}.
\end{align*}
This completes the proof of the implication $(a) \Rightarrow (c)$.\\
$(a)\Rightarrow(b)$ Since  transpose of a nuclear operator is nuclear, see  3.1.8 in [15], so by an application of Theorem \ref{lm1}(1), it follows that $(a)\Rightarrow(b)$.\\
$(b)\Rightarrow(a)$ Recall that  class  of nuclear operators  forms an operator ideal, that is, nuclear operators are a class of operators which do not fixes a copy of $l^\infty,$ (see \cite{JME1} and \cite{JME}),
and $\mathcal H^{0}_{\nu}$ is a closed subspace of $\mathcal H^{\infty}_{\mu}$, so it is obvious that $(b)\Rightarrow(a)$. This completes the proof. \end{proof}
\begin{thm} \label{1corA}   Let $\nu, \mu \in N_W(\mathbb D),$  $\alpha > -1$ and $\omega$ a weight function such that  $\{\nu, \omega\}$ is a normal pair and {\rm(}\ref{1b1}{\rm)} holds. If  $ g \in \mathcal B^{0}_{(1-|z|^{2})\mu(z)}$ and $\phi$ a self-map of $\mathbb D$   such that  $T_{g}^{\phi}: \mathcal H^{\infty}_{\nu} \longrightarrow  \mathcal H^{\infty}_{\mu}$ is bounded, then the following statements are equivalent.
\begin{enumerate}
\item [{(a)}] $T_{g}^{\phi}: \mathcal H^{0}_{\nu} \longrightarrow  \mathcal H^{\infty}_{\mu}$ is nuclear.
\item [{(b)}] $T_{g}^{\phi}: \mathcal H^{\infty}_{\nu} \longrightarrow  \mathcal H^{\infty}_{\mu}$ is nuclear.
\item [{(c)}] $T_{g}^{\phi}: \mathcal H^{0}_{\nu} \longrightarrow  \mathcal H^{0}_{\mu}$ is nuclear.
\item [{(d)}] $T_{g}^{\phi}: \mathcal H^{\infty}_{\nu} \longrightarrow  \mathcal H^{0}_{\mu}$ is nuclear.
\item [{(e)}] For  $g$, $\phi $ and $\nu$ satisfy the following condition:$$M_\alpha = \int_{\mathbb{D}}\bigg [\sup_{z\in \mathbb{D}}\frac{(1-|z|^{2})\mu(z)|g'(z)|}{|1-\bar{\zeta}\phi(z)|^{\alpha + 2}}\bigg ]\frac{(1-|\zeta|^{2})^\alpha}{\nu(\zeta)}dA(\zeta) < \infty.$$
\end{enumerate}
\end{thm}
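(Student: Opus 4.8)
The plan is to treat statements (a), (b), (e) as already settled by Theorem \ref{th3} (its hypotheses match ours, since $T^\phi_g:\mathcal H^\infty_\nu\to\mathcal H^\infty_\mu$ is assumed bounded) and to graft the two little-target statements (c) and (d) onto that chain by means of a single structural observation. The key point is that the canonical inclusion $j:\mathcal H^0_\mu\hookrightarrow\mathcal H^\infty_\mu$ is an isometry onto a closed subspace, and the property of being absolutely summing is completely insensitive to such an isometric enlargement of the target: for any bounded $S:X\to\mathcal H^0_\mu$ one has $\|Sx\|_{\mathcal H^0_\mu}=\|jSx\|_{\mathcal H^\infty_\mu}$, so the defining inequality of an absolutely summing map, whose right-hand side depends only on the domain $X$, is literally unchanged on passing from $S$ to $jS$. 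Hence $S$ is absolutely summing if and only if $jS$ is, with the same summing constant. Combined with the fact recalled in the Introduction that a map out of $c_0$ or $\ell^\infty$ is nuclear precisely when it is absolutely summing, together with the isomorphisms $\mathcal H^0_\nu\simeq c_0$ and $\mathcal H^\infty_\nu\simeq\ell^\infty$, this will let me shuttle nuclearity freely between the big- and little-target versions of $T^\phi_g$.

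First I would record that, since $T^\phi_g:\mathcal H^\infty_\nu\to\mathcal H^\infty_\mu$ is bounded (so that (\ref{aak101}) holds) and $g\in\mathcal B^0_{(1-|z|^2)\mu(z)}$, Theorem \ref{th1} guarantees that the operator appearing in (c), namely $T^\phi_g:\mathcal H^0_\nu\to\mathcal H^0_\mu$, is a well-defined bounded operator. Writing the map of (a) as $j\circ(\text{the map of }(c))$ and using $\mathcal H^0_\nu\simeq c_0$, the observation above yields (a)\,$\Leftrightarrow$\,(c): indeed (a) is nuclear iff it is absolutely summing iff (c) is absolutely summing iff (c) is nuclear. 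Since Theorem \ref{th3} already gives (a)\,$\Leftrightarrow$\,(b)\,$\Leftrightarrow$\,(e), this closes the loop on three of the five statements, and in particular shows that under condition (e) the operator (c) is nuclear, hence compact.

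To bring in (d) I would first address its well-definedness, which is the one genuinely non-formal point. Unlike (c), the map $T^\phi_g:\mathcal H^\infty_\nu\to\mathcal H^0_\mu$ need not land in the little space merely because $g\in\mathcal B^0_{(1-|z|^2)\mu(z)}$. However, from the previous paragraph the operator (c) is compact, so Theorem \ref{lm1}(ii) forces $T^\phi_g(\mathcal H^\infty_\nu)\subseteq\mathcal H^0_\mu$; thus (d) is a genuine bounded operator. Writing the map of (b) as $j\circ(\text{the map of }(d))$ and invoking $\mathcal H^\infty_\nu\simeq\ell^\infty$, the same isometric-enlargement argument gives (b)\,$\Leftrightarrow$\,(d). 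Together with the equivalences already in hand, all of (a)--(e) are then equivalent.

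The step that requires real care is exactly the well-definedness of (d): this is where the hypothesis $g\in\mathcal B^0_{(1-|z|^2)\mu(z)}$ and the compactness criterion of Theorem \ref{lm1}(ii) are indispensable, for without the inclusion $T^\phi_g(\mathcal H^\infty_\nu)\subseteq\mathcal H^0_\mu$ statement (d) would be vacuous. Everything else is formal: the ideal property of nuclear operators handles the easy directions (c)\,$\Rightarrow$\,(a) and (d)\,$\Rightarrow$\,(b) by composing with the bounded inclusion $j$, while the harder directions (a)\,$\Rightarrow$\,(c) and (b)\,$\Rightarrow$\,(d) are delivered by the isometry of $j$ together with the coincidence of nuclearity and absolute summability on $c_0$ and $\ell^\infty$. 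I would not need to reinvoke Pietsch's theorem (Theorem \ref{th2}) here, since the analytic content $M_\alpha<\infty$ is already packaged inside Theorem \ref{th3}.
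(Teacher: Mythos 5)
Your proof is correct, and for statements (a), (b), (e) and the link (a)$\Leftrightarrow$(c) it essentially coincides with the paper's argument: the paper also quotes Theorem~\ref{th3} for the first three, secures boundedness of the little-to-little operator from $g\in\mathcal B^{0}_{(1-|z|^{2})\mu(z)}$, and passes between the targets $\mathcal H^{0}_{\mu}$ and $\mathcal H^{\infty}_{\mu}$ by exactly your observation that absolute summability only sees the norms $\|T^{\phi}_{g}f\|$, which the isometric inclusion $j$ does not change. Where you genuinely diverge is in how (d) is attached. The paper runs the cycle (a)$\Rightarrow$(c)$\Rightarrow$(d)$\Rightarrow$(a), proving (c)$\Rightarrow$(d) by duality: it combines the identity $(T^{\phi}_{g})''=T^{\phi}_{g}$ from Theorem~\ref{lm1}(ii) with the fact that the transpose of a nuclear operator is nuclear (applied twice), and then gets (d)$\Rightarrow$(a) from the ideal property. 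You instead prove (b)$\Leftrightarrow$(d) directly by the same mechanism used for (a)$\Leftrightarrow$(c), now invoking $\mathcal H^{\infty}_{\nu}\simeq\ell^{\infty}$, and you call on Theorem~\ref{lm1}(ii) only to extract the range inclusion $T^{\phi}_{g}(\mathcal H^{\infty}_{\nu})\subset\mathcal H^{0}_{\mu}$ from compactness of the operator in (c), so that (d) is well defined. Your route has a concrete technical advantage: the bi-transpose argument, as the paper states it, produces a nuclear operator into $(\mathcal H^{0}_{\mu})''=\mathcal H^{\infty}_{\mu}$, and passing from that to nuclearity of the co-restricted operator into the closed subspace $\mathcal H^{0}_{\mu}$ is not automatic for general Banach spaces; it is precisely the absolutely-summing-on-$\ell^{\infty}$ step you supply that closes this point, and you are also more explicit than the paper about well-definedness of (d) being the one non-formal issue. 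What the paper's route buys in exchange is economy -- a single cycle of implications that reuses the duality framework $(T^{\phi}_{g})''=T^{\phi}_{g}$ already built in Theorem~\ref{lm1}, rather than two separate bidirectional equivalences.
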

\begin{proof} By Theorem \ref{th2}, we have that $(a) \Leftrightarrow (b) \Leftrightarrow (e).$ To complete the proof it is sufficient to prove that $(a) \Rightarrow (c) \Rightarrow (d)\Rightarrow (a).$\\
$(a) \Rightarrow (c)$ Suppose that $(a)$ holds. Then by Theorem \ref{lm1}(2)
$T_{g}^{\phi}: \mathcal H^{0}_{\nu} \longrightarrow  \mathcal H^{0}_{\mu}$ is bounded. Moreover, $T_{g}^{\phi}: \mathcal H^{0}_{\nu} \longrightarrow  \mathcal H^{\infty}_{\mu}$ is absolutely summing if and only if $T_{g}^{\phi}: \mathcal H^{0}_{\nu} \longrightarrow  \mathcal H^{0}_{\mu}.$ Thus the implication $(a) \Rightarrow (d)$ follows.\\
$(c) \Rightarrow (d)$  Using Theorem \ref{lm1}(ii) and the fact that transpose of a nuclear operator is nuclear the implication  follows.\\
$(d) \Rightarrow (a).$ The proof follows as the proof of  implication $(b) \Rightarrow (a)$ in Theorem \ref{th2}. This completes the proof.\end{proof}
\begin{cor}\label{2cor}  Let $\nu, \mu \in N_W(\mathbb D),$  $\alpha > -1$ and $\omega$ a weight function such that  $\{\nu, \omega\}$ is a normal pair and {\rm(}\ref{1b1}{\rm)} holds. If  $ g \in H(\mathbb D)$ and $\phi$ a self-map of $\mathbb D$   such that   $T_{g}^{\phi}: \mathcal H^{\infty}_{\nu} \longrightarrow  B_{\mu}$ is bounded, then the following statements are equivalent.
\begin{enumerate}
\item [{(a)}] $T_{g}^{\phi}: \mathcal H^{0}_{\nu} \longrightarrow  \mathcal B_{\mu}$ is nuclear.
\item [{(b)}] $T_{g}^{\phi}: \mathcal H^{\infty}_{\nu} \longrightarrow  \mathcal B_{\mu}$ is nuclear.
\item [{(c)}] For  $g$, $\phi $ and $\nu$ satisfy the following condition:$$ \int_{\mathbb{D}}\bigg [\sup_{z\in \mathbb{D}}\frac{ \mu(z)|g'(z)|}{|1-\bar{\zeta}\phi(z)|^{\alpha + 2}}\bigg ]\frac{(1-|\zeta|^{2})^\alpha}{\nu(\zeta)}dA(\zeta) < \infty.$$
\end{enumerate}
\end{cor}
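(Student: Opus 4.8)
The plan is to run the proof of Theorem~\ref{th3} essentially verbatim, the sole change being that the target norm is the $\mathcal B_\mu$-norm instead of the $\mathcal H^\infty_\mu$-norm. The observation that drives everything is the elementary identity
\[
\|T_g^\phi f\|_{\mathcal B_\mu}=|T_g^\phi f(0)|+\sup_{z\in\mathbb D}\mu(z)\,|(T_g^\phi f)'(z)|=\sup_{z\in\mathbb D}\mu(z)\,|f(\phi(z))|\,|g'(z)|,
\]
which holds because $T_g^\phi f(0)=0$ and $(T_g^\phi f)'(z)=f(\phi(z))g'(z)$. This is precisely the quantity appearing in the proof of Theorem~\ref{th3}, except that the Bloch weight $(1-|z|^2)\mu(z)$ there is now replaced by $\mu(z)$. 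Hence the factor $(1-|z|^2)$ is absent from the final integral, which is exactly the difference between condition~(c) here and condition~(c) of Theorem~\ref{th3}. I would record this identity first and then recycle the three arguments of Theorem~\ref{th3}.

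For the implication $(c)\Rightarrow(a)$ I would use $\mathcal H^0_\nu\simeq c_0$ to reduce nuclearity to absolute summability, and then mimic~(\ref{nvcx1})--(\ref{nvcw2}): given polynomials $f_1,\dots,f_N$ and $C>1$, choose near-optimal points $z_i$ realising the suprema above, write $f_i(\phi(z_i))=(f_i,K_{\phi(z_i)})$ through the pairing~(\ref{1a1}), exchange the order of the supremum and the integral, and bound $\sum_i\|T_g^\phi f_i\|_{\mathcal B_\mu}$ by $C\bigl(\sup_{w\in\mathbb D}\sum_i|f_i(w)|\nu(w)\bigr)$ times the integral in~(c). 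Since $\sup_{w}\sum_i|f_i(w)|\nu(w)=\sup_{|\eta_i|=1}\|\sum_i\eta_i f_i\|_{\mathcal H^\infty_\nu}$ and $C>1$ is arbitrary, this yields absolute summability, hence nuclearity. The very same estimate, now applied to arbitrary finite families $f_1,\dots,f_N\in\mathcal H^\infty_\nu$ (using $f_i(\zeta)=(f_i,K_\zeta)$, valid on all of $\mathcal H^\infty_\nu$, and $\mathcal H^\infty_\nu\simeq\ell^\infty$, where nuclearity again coincides with absolute summability), proves $(c)\Rightarrow(b)$.

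For $(a)\Rightarrow(c)$ I would apply Pietsch's Theorem (Theorem~\ref{th2}) with $p=1$ to $T_g^\phi:\mathcal H^0_\nu\to\mathcal B_\mu$, obtaining a probability measure $\varrho$ on the $\sigma(A^1_\omega,\mathcal H^0_\nu)$-compact unit ball of the \emph{domain} dual $(\mathcal H^0_\nu)'=A^1_\omega$ with $\|T_g^\phi f\|_{\mathcal B_\mu}\le C\int|\xi(f)|\,d\varrho(\xi)$. Substituting the kernels $f=K_\zeta\in\mathcal H^0_\nu$, using
\[
\|T_g^\phi K_\zeta\|_{\mathcal B_\mu}=(\alpha+1)\sup_{z\in\mathbb D}\frac{\mu(z)|g'(z)|}{|1-\bar\zeta\phi(z)|^{\alpha+2}}
\]
and the reproducing identity $\xi(K_\zeta)=\xi(\zeta)$, then integrating in $\zeta$ against $\omega\,dA=(1-|\zeta|^2)^\alpha\nu(\zeta)^{-1}\,dA$ and invoking Fubini together with $\|\xi\|_{A^1_\omega}\le1$, delivers the finiteness of the integral in~(c), exactly as in Theorem~\ref{th3}. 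Finally $(b)\Rightarrow(a)$ is immediate, being the restriction of a nuclear operator to the closed subspace $\mathcal H^0_\nu\subset\mathcal H^\infty_\nu$, so the loop $(a)\Rightarrow(c)\Rightarrow(b)\Rightarrow(a)$ closes.

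I do not anticipate a genuine obstacle, since all the analytic content is inherited from Theorem~\ref{th3}. The one place demanding care is the bookkeeping in $(a)\Rightarrow(c)$: the Pietsch measure must be placed on the unit ball of the \emph{domain} dual $A^1_\omega=(\mathcal H^0_\nu)'$ (not of the target), and one must use that the hypothesised boundedness of $T_g^\phi:\mathcal H^\infty_\nu\to\mathcal B_\mu$ guarantees $T_g^\phi K_\zeta\in\mathcal B_\mu$, so that both the kernel substitution and the estimate above are legitimate. A secondary point is that in proving $(c)\Rightarrow(b)$ one cannot reduce to polynomials (they are not dense in $\mathcal H^\infty_\nu\simeq\ell^\infty$), but the pairing identity $f_i(\zeta)=(f_i,K_\zeta)$ holds for every $f_i\in\mathcal H^\infty_\nu$, so the absolutely summing estimate goes through for arbitrary finite families.
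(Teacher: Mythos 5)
Your proposal is correct and takes essentially the same approach as the paper: this corollary is stated there without a separate proof precisely because it is Theorem~\ref{th3}'s argument rerun with the target norm $\|\cdot\|_{\mathcal B_\mu}$ in place of $\|\cdot\|_{\mathcal H^\infty_\mu}\asymp\|\cdot\|_{\mathcal B_{(1-|z|^2)\mu(z)}}$, which is exactly what you do, and this substitution removes the factor $(1-|z|^2)$ from the integral condition just as you observe. Your two cautionary points --- placing the Pietsch measure on the unit ball of the domain dual $(\mathcal H^0_\nu)'=A^1_\omega$, and invoking the reproducing identity $f(\zeta)=(f,K_\zeta)$ on all of $\mathcal H^\infty_\nu\simeq\ell^\infty$ rather than polynomial density when proving the implication to statement (b) --- are both sound and consistent with the paper's own usage.
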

\begin{cor}\label{3cor}  Let $\nu, \mu \in N_W(\mathbb D),$  $\alpha > -1$ and $\omega$ a weight function such that  $\{\nu, \omega\}$ is a normal pair and {\rm(}\ref{1b1}{\rm)} holds. If  $ g \in \mathcal B^{0}_{\mu}$ and $\phi$ a self-map of $\mathbb D$   such that  $T_{g}^{\phi}: \mathcal H^{\infty}_{\nu} \longrightarrow  \mathcal B_{\mu}$ is bounded, then the following statements are equivalent.
\begin{enumerate}
\item [{(a)}] $T_{g}^{\phi}: \mathcal H^{0}_{\nu} \longrightarrow  \mathcal B_{\mu}$ is nuclear.
\item [{(b)}] $T_{g}^{\phi}: \mathcal H^{\infty}_{\nu} \longrightarrow  \mathcal B_{\mu}$ is nuclear.
\item [{(c)}] $T_{g}^{\phi}: \mathcal H^{0}_{\nu} \longrightarrow  \mathcal B^{0}_{\mu}$ is nuclear.
\item [{(d)}] $T_{g}^{\phi}: \mathcal H^{\infty}_{\nu} \longrightarrow  \mathcal B^{0}_{\mu}$ is nuclear.
\item [{(e)}] For  $g$, $\phi $ and $\nu$ satisfy the following condition:$$ \int_{\mathbb{D}}\bigg [\sup_{z\in \mathbb{D}}\frac{\mu(z)|g'(z)|}{|1-\bar{\zeta}\phi(z)|^{\alpha + 2}}\bigg ]\frac{(1-|\zeta|^{2})^\alpha}{\nu(\zeta)}dA(\zeta) < \infty.$$
\end{enumerate}
\end{cor}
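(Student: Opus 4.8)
The plan is to mirror the proofs of Theorem~\ref{th3} and Theorem~\ref{1corA}, the only structural change being that the target is the genuine weighted Bloch space $\mathcal B_\mu$ rather than the weighted-type space $\mathcal H^\infty_\mu$. The computation that makes the transfer transparent is that for every $f$ in the domain we have $(T_g^\phi f)(0)=0$ and $(T_g^\phi f)'(z)=f(\phi(z))g'(z)$, whence
\[
\|T_g^\phi f\|_{\mathcal B_\mu}=\sup_{z\in\mathbb D}\mu(z)\,|f(\phi(z))|\,|g'(z)|.
\]
This is exactly the quantity that controlled $\|T_g^\phi f\|_{\mathcal H^\infty_\mu}$ in Theorem~\ref{th3}, but with the factor $(1-|z|^2)$ deleted; the deletion is accounted for precisely by the fact that there $\mathcal H^\infty_\mu$ was identified with $\mathcal B_{(1-|z|^2)\mu}$, whereas the present target carries the weight $\mu$ itself. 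Everything else used in that proof concerns only the domain: the isomorphism $\mathcal H^0_\nu\simeq c_0$ (so that nuclear $\Leftrightarrow$ absolutely summing), the duality $(\mathcal H^0_\nu)'=A^1_\omega$ coming from the normal pair $\{\nu,\omega\}$, and the reproducing identity $f(\zeta)=(f,K_\zeta)$. None of these is affected, so condition (e) is obtained from the corresponding condition in Theorem~\ref{th3} simply by striking out $(1-|z|^2)$.

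For $(a)\Leftrightarrow(b)\Leftrightarrow(e)$ I would repeat the argument of Theorem~\ref{th3} with this adjusted norm. In $(e)\Rightarrow(a)$ one selects, for each $C>1$, near-extremal points $z_i$ realizing the suprema $\sup_z\mu(z)|f_i(\phi(z))g'(z)|$, writes $f_i(\phi(z_i))=(f_i,K_{\phi(z_i)})$ via (\ref{1a1}), and applies Fubini to dominate $\sum_i\|T_g^\phi f_i\|_{\mathcal B_\mu}$ by $C\bigl(\sup_w\sum_i|f_i(w)|\nu(w)\bigr)$ times the integral in (e); rewriting the first factor as $\sup_{|\eta_i|=1}\|\sum_i\eta_i f_i\|_{\mathcal H^\infty_\nu}$ gives the absolutely summing estimate. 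In $(a)\Rightarrow(e)$ one applies Pietsch's theorem (Theorem~\ref{th2}) to obtain a probability measure $\varrho$ on the unit ball of $A^1_\omega=(\mathcal H^0_\nu)'$, tests the Pietsch inequality (\ref{epu1}) on the kernels $K_\zeta$, and uses $\|T_g^\phi K_\zeta\|_{\mathcal B_\mu}\gtrsim\sup_z\mu(z)|g'(z)|/|1-\bar\zeta\phi(z)|^{\alpha+2}$ in place of (\ref{epu2}); integrating against $\omega\,dA$, applying Fubini and the reproducing property $\xi(K_\zeta)=\xi(\zeta)$, and using $\|\xi\|_{A^1_\omega}\le1$ yields the finiteness in (e). The equivalence $(a)\Leftrightarrow(b)$ follows, as in Theorem~\ref{th3}, from the facts that the transpose of a nuclear operator is nuclear and that nuclear operators form an ideal not fixing a copy of $\ell_\infty$, together with $\mathcal H^0_\nu$ being a closed subspace of $\mathcal H^\infty_\nu$.

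The little-Bloch equivalences $(a)\Rightarrow(c)\Rightarrow(d)\Rightarrow(a)$ are handled as in Theorem~\ref{1corA}, after first recording the Bloch analogue of Theorem~\ref{lm1}(ii) and Theorem~\ref{lm2}(ii): since $g\in\mathcal B^0_\mu$, the operator $T_g^\phi$ maps $\mathcal H^0_\nu$ into $\mathcal B^0_\mu$ and satisfies $(T_g^\phi)''=T_g^\phi$. The range inclusion I would prove by density: for a polynomial $p$ the function $p\circ\phi$ is bounded on $\mathbb D$, so $\mu(z)|p(\phi(z))g'(z)|\le\|p\|_\infty\,\mu(z)|g'(z)|\to0$ because $g\in\mathcal B^0_\mu$; since polynomials are dense in $\mathcal H^0_\nu$ and $T_g^\phi$ is bounded, $T_g^\phi(\mathcal H^0_\nu)\subset\mathcal B^0_\mu$. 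The identity $(T_g^\phi)''=T_g^\phi$ is obtained by the argument of Lemma~1 in \cite{Ba14} with $T_g$ replaced by $T_g^\phi$. Granting these, $(a)\Rightarrow(c)$ follows because absolutely summing is insensitive to corestriction to the closed subspace $\mathcal B^0_\mu$ containing the range (and $\mathcal H^0_\nu\simeq c_0$ turns this back into nuclearity); $(c)\Rightarrow(d)$ follows from the bi-transpose identity together with the fact that the transpose of a nuclear operator is nuclear; and $(d)\Rightarrow(a)$ follows by restricting to the closed subspace $\mathcal H^0_\nu\subset\mathcal H^\infty_\nu$ and composing with the bounded inclusion $\mathcal B^0_\mu\hookrightarrow\mathcal B_\mu$.

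I expect the only genuine obstacle to lie in this last paragraph, namely in verifying the Bloch counterparts of Theorems~\ref{lm1}(ii) and~\ref{lm2}(ii) — the range inclusion $T_g^\phi(\mathcal H^0_\nu)\subset\mathcal B^0_\mu$ and the bi-transpose identity $(T_g^\phi)''=T_g^\phi$ — while the nuclearity equivalences $(a)\Leftrightarrow(b)\Leftrightarrow(e)$ reduce to a routine transcription of Theorem~\ref{th3} once the norm identity displayed above is in hand.
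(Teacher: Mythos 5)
Your proposal is correct and matches the paper's intended argument: the paper states this corollary with no proof at all, treating it as an immediate consequence of Theorem \ref{th3} and Theorem \ref{1corA}, and your write-up is exactly that adaptation — the Pietsch and absolutely-summing arguments repeated with the Bloch norm $\sup_{z\in\mathbb D}\mu(z)|f(\phi(z))||g'(z)|$ (i.e.\ with the factor $(1-|z|^{2})$ deleted), together with the range-inclusion/bi-transpose/corestriction scheme of Theorem \ref{1corA} for the little-space statements. If anything, your verification of the range inclusion $T_g^\phi(\mathcal H^0_\nu)\subset\mathcal B^0_\mu$ via density of polynomials and $g\in\mathcal B^0_\mu$ is more explicit than anything the paper provides.
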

We can easily obtain the following corollaries.
\begin{cor}\label{4cor}  Let $\nu, \mu \in N_W(\mathbb D),$  $\alpha > -1$ and $\omega$ a weight function such that  $\{\nu, \omega\}$ is a normal pair and {\rm(}\ref{1b1}{\rm)} holds. If $\phi$ is a self-map of $\mathbb D$ such that  $C_{\phi}: \mathcal H^{\infty}_{\nu} \longrightarrow  \mathcal B_{\mu}$ is bounded, then the following statements are equivalent.
\begin{enumerate}
\item [{(a)}] $C_{\phi} : \mathcal H^{0}_{\nu} \longrightarrow  \mathcal B_{\mu}$ is nuclear.
\item [{(b)}] $C_{\phi} : \mathcal H^{\infty}_{\nu} \longrightarrow  \mathcal B_{\mu}$ is nuclear.
\item [{(c)}]   $\phi $ and $\nu$ satisfy the following condition:$$ \int_{\mathbb{D}}\bigg [\sup_{z\in \mathbb{D}}\frac{ \mu(z)|\phi'(z)|}{|1-\bar{\zeta}\phi(z)|^{\alpha + 2}}\bigg ]\frac{(1-|\zeta|^{2})^\alpha}{\nu(\zeta)}dA(\zeta) < \infty.$$
\end{enumerate}
\end{cor}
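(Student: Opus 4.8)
The plan is to obtain Corollary \ref{4cor} as the composition-operator analogue of Corollary \ref{2cor}, running the Pietsch-measure argument of Theorem \ref{th3} with $C_\phi$ in place of $T^\phi_g$. The conceptual starting point is the identity $(C_\phi f)'(z)=f'(\phi(z))\phi'(z)=(S^\phi_{\phi'}f)'(z)$, equivalently $C_\phi f=S^\phi_{\phi'}f+f(\phi(0))$, so that $C_\phi$ differs from the companion operator $S^\phi_{\phi'}$ only by the rank-one map $f\mapsto f(\phi(0))$, which is nuclear because point evaluation is bounded on $\mathcal H^\infty_\nu$. This identity explains why the condition in $(c)$ carries $\phi'$ (it plays the role of the symbol $g$ for $S^\phi_{\phi'}$), and it lets Theorem \ref{lm2} furnish the bitranspose facts for the companion operator that the corresponding part of Theorem \ref{th3} drew from Theorem \ref{lm1}. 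With the rank-one term absorbed, the equivalence $(a)\Leftrightarrow(b)$ then follows verbatim as in Theorem \ref{th3}: transpose of a nuclear operator is nuclear for $(a)\Rightarrow(b)$, and the operator-ideal argument (nuclear operators do not fix a copy of $\ell^\infty$, and $\mathcal H^0_\nu$ is a closed subspace of $\mathcal H^\infty_\nu$) for $(b)\Rightarrow(a)$.

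For $(c)\Rightarrow(a)$ I would use $\mathcal H^0_\nu\simeq c_0$ to reduce nuclearity to absolute summability and then verify the Pietsch inequality on polynomials $f_1,\dots,f_N$ exactly as in Theorem \ref{th3}. Writing $\|C_\phi f_i\|_{\mathcal B_\mu}\asymp|f_i(\phi(0))|+\sup_{z}\mu(z)|\phi'(z)|\,|f_i'(\phi(z))|$ and choosing near-extremal points $z_i$, the main term becomes $\sum_i\mu(z_i)|\phi'(z_i)|\,|f_i'(\phi(z_i))|$. The new ingredient is to reproduce the \emph{derivative}: differentiating the identity $f(\zeta)=(f,K_\zeta)$ in its parameter yields a representation $f_i'(\phi(z_i))=(f_i,\partial K_{\phi(z_i)})$ against a differentiated kernel, and feeding the corresponding integral estimate into the sum makes it factor, just as in Theorem \ref{th3}, into a product of $\sup_{w}\sum_i|f_i(w)|\nu(w)=\sup_{|\eta_i|=1}\|\sum_i\eta_i f_i\|_{\mathcal H^\infty_\nu}$ and the integral $M_\alpha$ appearing in $(c)$. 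The evaluation term $f_i(\phi(0))$ is dominated in the same way using the undifferentiated kernel $K_{\phi(0)}$.

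For $(a)\Rightarrow(c)$ I would again invoke Pietsch's theorem (Theorem \ref{th2}) to produce a probability measure $\varrho$ on the unit ball of $A^1_\omega$ with $\|C_\phi f\|_{\mathcal B_\mu}\le C\int|\xi(f)|\,d\varrho(\xi)$, test it on the kernels $K_\zeta\in\mathcal H^0_\mu$, bound $\|C_\phi K_\zeta\|_{\mathcal B_\mu}$ from below by the relevant supremum, integrate over $\mathbb D$ against $\omega\,dA$, and collapse the resulting double integral by Fubini together with the reproducing identity $\xi(K_\zeta)=\xi(\zeta)$ and $\|\xi\|_{A^1_\omega}\le1$; this returns exactly the finiteness of the integral in $(c)$. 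The equivalence $(a)\Leftrightarrow(b)$ is then supplied by the first paragraph, closing the cycle.

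The step I expect to be the real obstacle is the derivative reproduction in $(c)\Rightarrow(a)$ and the matching lower bound in $(a)\Rightarrow(c)$: because $C_\phi$ into a Bloch space differentiates the argument, one must represent $f'(\phi(z))$ rather than $f(\phi(z))$, and care is needed to check that the admissible kernel exponent is the one recorded in $(c)$ and that the estimate is independent of the particular $\alpha$ in the normal-pair range $\alpha>\beta-1$. Once the correct differentiated kernel is in hand, the remaining manipulations are identical to those already carried out for $T^\phi_g$, which is precisely why the statement is phrased as a corollary.
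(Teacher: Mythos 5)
Your structural idea is sound, and it is in fact more honest than what the paper itself offers: the paper gives no argument for this corollary at all, and its implicit derivation (set $g=\phi$ in the corollary for $T^{\phi}_{g}:\mathcal H^{\infty}_{\nu}\to\mathcal B_{\mu}$, as the form of (c) shows) is illegitimate, because $T^{\phi}_{\phi}f=F\circ\phi-F(\phi(0))$ with $F'=f$, which is not $C_{\phi}f$. Your identity $C_{\phi}f=S^{\phi}_{\phi'}f+f(\phi(0))$ is the correct one, and absorbing the rank-one evaluation term is fine. But the step you yourself flag as ``the real obstacle'' is not a technicality; it is exactly where the proof breaks, and it cannot be repaired. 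Since the target is a Bloch-type space, both halves of the Pietsch argument see $f'(\phi(z))\phi'(z)$ and therefore run through the differentiated kernel $K'_{\zeta}(w)=(\alpha+1)(\alpha+2)\,\zeta\,(1-\zeta w)^{-(\alpha+3)}$, exactly as in the proof of Theorem \ref{th4}. Consequently the quantity your sufficiency argument factors out, and the quantity that testing the Pietsch inequality on $K_{\zeta}$ produces, is $\int_{\mathbb D}\bigl[\sup_{z}\mu(z)|\phi'(z)|\,|1-\bar\zeta\phi(z)|^{-(\alpha+3)}\bigr](1-|\zeta|^{2})^{\alpha}\nu(\zeta)^{-1}dA(\zeta)$, with exponent $\alpha+3$, not the integral in (c), which has exponent $\alpha+2$. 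Since $|1-\bar\zeta\phi(z)|\le 2$, the $\alpha+3$ condition implies (c), but not conversely; so your claim that the factorization yields ``the integral $M_\alpha$ appearing in (c)'' is false, and the implication (c)$\Rightarrow$(a) is left unproved.

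This gap is unfixable, because (c)$\Rightarrow$(a) is actually false, i.e.\ the corollary as printed is wrong. Take $\phi(z)=z$, $\nu(z)=(1-|z|^{2})^{\beta}$, $\mu(z)=(1-|z|^{2})^{\gamma}$ with $\beta+1<\gamma<\beta+2$, and any admissible $\alpha>\beta-1$. Then $C_{\phi}:\mathcal H^{\infty}_{\nu}\to\mathcal B_{\mu}$ is bounded, and the standard estimate $\sup_{z}(1-|z|^{2})^{\gamma}|1-\bar\zeta z|^{-c}\asymp(1-|\zeta|^{2})^{\gamma-c}$ for $\gamma<c$ (bounded for $\gamma\ge c$) shows that the integral in (c) is finite precisely when $\gamma>\beta+1$, whereas the $\alpha+3$ integral is finite precisely when $\gamma>\beta+2$. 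By the necessity half of your own argument (Pietsch tested on the kernels $K_{\zeta}$, as in Theorem \ref{th4}), nuclearity forces the $\alpha+3$ integral to be finite; equivalently, this $C_{\phi}$ is the inclusion of $\mathcal H^{\infty}_{(1-|z|^{2})^{\beta}}$ into $\mathcal B_{(1-|z|^{2})^{\gamma}}=\mathcal H^{\infty}_{(1-|z|^{2})^{\gamma-1}}$, which is nuclear only when $\gamma-1>\beta+1$. So for $\beta+1<\gamma<\beta+2$ condition (c) holds but $C_{\phi}$ is not nuclear. What your method does prove, once carried out correctly, is the statement with $|1-\bar\zeta\phi(z)|^{\alpha+3}$ in place of $|1-\bar\zeta\phi(z)|^{\alpha+2}$ in (c) --- which is precisely what substituting $g=\phi'$ into the paper's corollaries for $S^{\phi}_{g}:\mathcal H^{\infty}_{\nu}\to\mathcal B_{\mu}$ gives (the paper records that condition in a later corollary mislabelled ``$C_{\phi}D$''; since $S^{\phi}_{\phi'}=C_{\phi}-f(\phi(0))$, the operator it actually governs is $C_{\phi}$), and which is also consistent with the Fares--Lef\'evre Bloch-space result, where the kernel exponent for composition operators is $3$, not $2$.
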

\begin{cor}\label{5cor}  Let $\nu, \mu \in N_W(\mathbb D),$  $\alpha > -1$ and $\omega$ a weight function such that  $\{\nu, \omega\}$ is a normal pair and {\rm(}\ref{1b1}{\rm)} holds. If   $ \phi \in \mathcal B^{0}_{\mu(z)} $ and  $C_{\phi}: \mathcal H^{\infty}_{\nu} \longrightarrow  \mathcal B_{\mu}$ is bounded, then the following statements are equivalent.
\begin{enumerate}
\item [{(a)}] $C_{\phi}: \mathcal H^{0}_{\nu} \longrightarrow  \mathcal B_{\mu}$ is nuclear.
\item [{(b)}] $C_{\phi}: \mathcal H^{\infty}_{\nu} \longrightarrow  \mathcal B_{\mu}$ is nuclear.
\item [{(c)}] $C_{\phi}: \mathcal H^{0}_{\nu} \longrightarrow  \mathcal B^{0}_{\mu}$ is nuclear.
\item [{(d)}] $C_{\phi}: \mathcal H^{\infty}_{\nu} \longrightarrow  \mathcal B^{0}_{\mu}$ is nuclear.
\item [{(e)}]   $\phi $ and $\nu$ satisfy the following condition:$$ \int_{\mathbb{D}}\bigg [\sup_{z\in \mathbb{D}}\frac{\mu(z)|\phi'(z)|}{|1-\bar{\zeta}\phi(z)|^{\alpha + 2}}\bigg ]\frac{(1-|\zeta|^{2})^\alpha}{\nu(\zeta)}dA(\zeta) < \infty.$$
\end{enumerate}
\end{cor}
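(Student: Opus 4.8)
The plan is to obtain Corollary~\ref{5cor} by grafting the two little-Bloch-space targets (c) and (d) onto the equivalence already furnished by Corollary~\ref{4cor}. Under the standing boundedness hypothesis, Corollary~\ref{4cor} gives $(a)\Leftrightarrow(b)\Leftrightarrow(e)$ verbatim, its statements (a), (b), (c) being precisely (a), (b), (e) here, so the integral condition (e) is inherited for free and I never have to rerun the Pietsch argument. What remains is to thread (c) and (d) through the chain, and this should be a transcription of the passage from Corollary~\ref{2cor} to Corollary~\ref{3cor} (equivalently, from Theorem~\ref{th3} to Theorem~\ref{1corA}), with the hypothesis $\phi\in\mathcal B^0_\mu$ taking over the role that $g\in\mathcal B^0_\mu$ plays there.

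The one structural fact I must supply for $C_\phi$ is the analogue of Theorem~\ref{lm1}(ii)/Theorem~\ref{lm2}(ii): that $\phi\in\mathcal B^0_\mu$ forces $C_\phi(\mathcal H^0_\nu)\subset\mathcal B^0_\mu$ with $C_\phi:\mathcal H^0_\nu\to\mathcal B^0_\mu$ bounded, and that the bi-transpose obeys $(C_\phi)''=C_\phi$. The natural route exploits the splitting
\[
C_\phi=E+S^\phi_{\phi'},\qquad E f:=f(\phi(0)),
\]
which is exact because $S^\phi_{\phi'}f(z)=\int_0^z f'(\phi(\zeta))\phi'(\zeta)\,d\zeta=f(\phi(z))-f(\phi(0))$. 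Here $E$ is a rank-one (hence nuclear, weakly compact, self-bitransposed) operator, while $S^\phi_{\phi'}$ is a companion Volterra operator whose Bloch-target norm is controlled by $\|C_\phi f\|_{\mathcal B_\mu}\asymp |f(\phi(0))|+\sup_{z\in\mathbb D}\mu(z)\,|\phi'(z)|\,|f'(\phi(z))|$. Since $\phi\in\mathcal B^0_\mu$ means $\mu|\phi'|\to0$ at the boundary, the hypothesis of Theorem~\ref{lm2}(ii) holds for the symbol $\phi'$, and the weak-compactness/Gantmacher--Nakamura machinery of Theorems~\ref{lm1} and \ref{lm2} applies to $S^\phi_{\phi'}$ (read through the identification $\mathcal H^\infty_\rho=\mathcal B_{(1-|z|^2)\rho}$ used throughout). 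Adding back the rank-one part $E$ then yields the two required properties for $C_\phi$. I expect this step to carry most of the work, precisely because $C_\phi f$ does not vanish at the origin and so the clean reductions available for $T^\phi_g$ and $S^\phi_g$ are not immediate.

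With these facts secured, the cycle $(a)\Rightarrow(c)\Rightarrow(d)\Rightarrow(a)$ is routine. For $(a)\Rightarrow(c)$, nuclearity makes $C_\phi:\mathcal H^0_\nu\to\mathcal B_\mu$ absolutely summing; since the range already lies in $\mathcal B^0_\mu$ and this little space carries the same norm, the corestriction $C_\phi:\mathcal H^0_\nu\to\mathcal B^0_\mu$ is absolutely summing as well, hence nuclear because $\mathcal H^0_\nu\simeq c_0$. For $(c)\Rightarrow(d)$, the transpose of a nuclear operator is nuclear, and the identity $(C_\phi)''=C_\phi$ promotes nuclearity on $\mathcal H^0_\nu$ to nuclearity of $C_\phi:\mathcal H^\infty_\nu\to\mathcal B^0_\mu$ on the bidual, exactly as in $(c)\Rightarrow(d)$ of Theorem~\ref{1corA}. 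For $(d)\Rightarrow(a)$, the nuclear operators form an operator ideal that does not fix a copy of $\ell^\infty$, and $\mathcal H^0_\nu$ is a closed subspace of $\mathcal H^\infty_\nu$ with $\mathcal B^0_\mu\subset\mathcal B_\mu$, so restricting the domain and enlarging the range keeps the operator nuclear, just as in the implication $(b)\Rightarrow(a)$ of Theorem~\ref{th3}. This closes the loop and, together with Corollary~\ref{4cor}, gives the full equivalence.
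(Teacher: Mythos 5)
Your proposal is sound, and it is in fact more of a proof than the paper supplies: the paper states this corollary with no argument beyond the remark ``we can easily obtain the following corollaries,'' the implicit route being the formal substitution $g=\phi$ in Corollaries \ref{2cor} and \ref{3cor} (whence the exponent $\alpha+2$ and the hypothesis $\phi\in\mathcal B^{0}_{\mu}$). Your handling of (a), (b), (e) by quoting Corollary \ref{4cor}, and your cycle $(a)\Rightarrow(c)\Rightarrow(d)\Rightarrow(a)$ modelled on Theorem \ref{1corA}, reproduce that architecture exactly. The genuine difference is the bridge you build to bring $C_\phi$ into the Volterra framework: the exact splitting $C_\phi=E+S^{\phi}_{\phi'}$ with $Ef=f(\phi(0))$ rank one, combined with Theorem \ref{lm2}(ii) read with target weight $\mu(z)/(1-|z|^{2})$, so that its hypothesis becomes $\phi'\in\mathcal H^{0}_{\mu}$, i.e.\ precisely $\phi\in\mathcal B^{0}_{\mu}$; this is what legitimately yields the boundedness of $C_\phi:\mathcal H^{0}_{\nu}\to\mathcal B^{0}_{\mu}$ and the identity $(C_\phi)''=C_\phi$. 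This step is both necessary and absent from the paper: $C_\phi$ is literally neither a $T^{\phi}_{g}$ nor an $S^{\phi}_{g}$ (its image does not vanish at the origin, as you note), and the identification $C_\phi=T^{\phi}_{\phi}$ implicit in the paper's substitution is false, since $T^{\phi}_{\phi}f=(Jf)\circ\phi-(Jf)(\phi(0))$ with $J$ the integration operator, which shifts the domain weight. Two caveats, both inherited from the paper rather than introduced by you: first, the identification $\mathcal B_{\mu}=\mathcal H^{\infty}_{\mu(z)/(1-|z|^{2})}$ and its little-oh version presume that $\mu(z)/(1-|z|^{2})$ is again a normal weight, which neither you nor the paper verifies and which can fail, e.g.\ for $\mu(z)=(1-|z|^{2})^{1/2}$; second, had you re-derived condition (e) through your $S$-family splitting instead of inheriting it from Corollary \ref{4cor}, you would have obtained the exponent $\alpha+3$ (matching the paper's later corollaries for $C_\phi D$) rather than $\alpha+2$, so the two derivations the paper gestures at do not agree on the integral condition; your decision to quote Corollary \ref{4cor} for (e) keeps your proof internally consistent, but the discrepancy is worth being aware of.
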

\begin{cor}\label{2cor}   Let $\nu, \mu \in N_W(\mathbb D),$  $\alpha > -1$ and $\omega$ a weight function such that  $\{\nu, \omega\}$ is a normal pair and {\rm(}\ref{1b1}{\rm)} holds. If $g\in {H}(\mathbb D)$ be  such that  $T_{g} : \mathcal H^{\infty}_{\nu} \longrightarrow  B_{\mu}$ is bounded, then the following statements are equivalent.
\begin{enumerate}
\item [{(a)}] $T_{g} : \mathcal H^{0}_{\nu} \longrightarrow  \mathcal B_{\mu}$ is nuclear.
\item [{(b)}] $T_{g} : \mathcal H^{\infty}_{\nu} \longrightarrow  \mathcal B_{\mu}$ is nuclear.
\item [{(c)}] For  $g$  and $\nu$ satisfy the following condition:$$ \int_{\mathbb{D}}\bigg [\sup_{z\in \mathbb{D}}\frac{ \mu(z)|g'(z)|}{|1-\bar{\zeta}z|^{\alpha + 2}}\bigg ]\frac{(1-|\zeta|^{2})^\alpha}{\nu(\zeta)}dA(\zeta) < \infty.$$
\end{enumerate}
\end{cor}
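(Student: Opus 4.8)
The plan is to recognize the final statement as the specialization $\phi = \mathrm{id}$ of the Bloch-target characterization of $T^\phi_g$ proved just above, where $\mathrm{id}(z) = z$ denotes the identity self-map of $\mathbb D$. First I would observe that $T^{\mathrm{id}}_g = T_g$, since $(T^{\mathrm{id}}_g f)(z) = \int_0^z f(\zeta) g'(\zeta)\,d\zeta = (T_g f)(z)$; moreover $\mathrm{id}$ is an analytic self-map of $\mathbb D$, so it is a legitimate choice of $\phi$. Consequently the standing hypothesis that $T_g : \mathcal H^\infty_\nu \to \mathcal B_\mu$ is bounded is exactly the boundedness hypothesis of the parent corollary with $\phi = \mathrm{id}$, and all of its hypotheses transfer verbatim. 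Substituting $\phi(z) = z$ into its integral criterion replaces $|1 - \bar{\zeta}\phi(z)|^{\alpha+2}$ by $|1-\bar{\zeta} z|^{\alpha+2}$, which is precisely condition (c); hence (a) $\Leftrightarrow$ (b) $\Leftrightarrow$ (c).

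For completeness, and to keep the argument self-contained, I would alternatively run the proof directly in the pattern of Theorem \ref{th3}. The crucial preliminary reduction is that, since $(T_g f)(0) = 0$ and $(T_g f)'(z) = g'(z) f(z)$, one has the exact identity $\|T_g f\|_{\mathcal B_\mu} = \sup_{z\in\mathbb D}\mu(z)|g'(z)||f(z)| = \|M_{g'} f\|_{\mathcal H^\infty_\mu}$. Thus $T_g : \mathcal H^\infty_\nu \to \mathcal B_\mu$ is, up to this identity, the multiplication operator $M_{g'} : \mathcal H^\infty_\nu \to \mathcal H^\infty_\mu$. This is why the weight in condition (c) is $\mu(z)|g'(z)|$ rather than $(1-|z|^2)\mu(z)|g'(z)|$: passing to a Bloch target absorbs exactly one layer of the identification $\mathcal H^\infty_\mu = \mathcal B_{(1-|z|^2)\mu(z)}$ used in Theorem \ref{th3}.

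Given this reduction, the direction (c) $\Rightarrow$ (a) would proceed as in Theorem \ref{th3}: using $\mathcal H^0_\nu \simeq c_0$ so that nuclearity is equivalent to absolute summability, I would pick near-extremal points $z_i$, write $f_i(z_i) = (f_i, K_{z_i})$ via the pairing (\ref{1a1}), and bound $\sum_i \|T_g f_i\|_{\mathcal B_\mu}$ by $C\big(\sup_w \sum_i |f_i(w)|\nu(w)\big)$ times the integral in (c). The reverse (a) $\Rightarrow$ (c) would use Pietsch's Theorem \ref{th2} to produce a probability measure $\varrho$, test the resulting estimate on the kernels $K_\zeta \in \mathcal H^0_\mu$, integrate over $\mathbb D$ against $\omega\,dA$, and apply Fubini together with the reproducing property $\xi(K_\zeta) = \xi(\zeta)$. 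Finally (a) $\Leftrightarrow$ (b) follows because the transpose of a nuclear operator is nuclear and $\mathcal H^0_\nu$ sits as a $w^*$-dense closed subspace of $\mathcal H^\infty_\nu$.

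I expect there to be no genuine obstacle here: the content is already carried by Theorem \ref{th3} and the preceding Bloch-target corollary, and the only points needing care are the exact identity $\|T_g f\|_{\mathcal B_\mu} = \|M_{g'} f\|_{\mathcal H^\infty_\mu}$ (which relies on $(T_g f)(0) = 0$) and the verification that $\phi = \mathrm{id}$ satisfies the self-map requirement. The single substantive estimate in the whole chain — the Pietsch-measure inequality and its integration against the kernel functions — is inherited unchanged from the parent theorem.
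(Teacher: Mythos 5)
Your proposal is correct and takes essentially the paper's own route: the paper states this corollary without a separate proof, as an immediate specialization of the preceding Bloch-target corollary for $T^\phi_g$ to the identity self-map $\phi(z)=z$, which is precisely your primary argument. Your alternative self-contained sketch just replays the proof of Theorem \ref{th3} using the identification $\|T_g f\|_{\mathcal B_\mu}=\sup_{z\in\mathbb D}\mu(z)|g'(z)||f(z)|$ (i.e.\ the Bloch target absorbing one factor of $(1-|z|^2)$), so it too coincides with the paper's method rather than offering a genuinely different one.
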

\begin{cor}\label{3cor}   Let $\nu, \mu \in N_W(\mathbb D),$  $\alpha > -1$ and $\omega$ a weight function such that  $\{\nu, \omega\}$ is a normal pair and {\rm(}\ref{1b1}{\rm)} holds. If    $ g \in \mathcal B^{0}_{\mu(z)} $ be such that  $T_{g} : \mathcal H^{\infty}_{\nu} \longrightarrow  \mathcal B_{\mu}$ is bounded, then the following statements are equivalent.
\begin{enumerate}
\item [{(a)}] $T_{g} : \mathcal H^{0}_{\nu} \longrightarrow  \mathcal B_{\mu}$ is nuclear.
\item [{(b)}] $T_{g} : \mathcal H^{\infty}_{\nu} \longrightarrow  \mathcal B_{\mu}$ is nuclear.
\item [{(c)}] $T_{g} : \mathcal H^{0}_{\nu} \longrightarrow  \mathcal B^{0}_{\mu}$ is nuclear.
\item [{(d)}] $T_{g} : \mathcal H^{\infty}_{\nu} \longrightarrow  \mathcal B^{0}_{\mu}$ is nuclear.
\item [{(e)}] For  $g$  and $\nu$ satisfy the following condition:$$ \int_{\mathbb{D}}\bigg [\sup_{z\in \mathbb{D}}\frac{\mu(z)|g'(z)|}{|1-\bar{\zeta}z|^{\alpha + 2}}\bigg ]\frac{(1-|\zeta|^{2})^\alpha}{\nu(\zeta)}dA(\zeta) < \infty.$$
\end{enumerate}
\end{cor}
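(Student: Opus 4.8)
The plan is to read this statement off the Volterra composition results already established, by specializing the symbol $\phi$ to the identity map of $\mathbb D$. Writing $\phi(z)=z$ we have $T_g=T_g^\phi$, and the reproducing-kernel factor $|1-\bar\zeta\phi(z)|^{\alpha+2}$ occurring in condition $(c)$ of Theorem \ref{th3} and in Theorem \ref{1corA} collapses to $|1-\bar\zeta z|^{\alpha+2}$; thus the quantity $M_\alpha$ there becomes precisely the integral appearing in $(e)$ above. With this identification the standing hypotheses here, namely boundedness of $T_g:\mathcal H^\infty_\nu\to\mathcal B_\mu$ together with $g\in\mathcal B^0_{\mu(z)}$, are exactly the hypotheses of those results read with $\phi$ equal to the identity and with the target taken to be the Bloch space.

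The one remaining point to record is the passage from the weighted-type target $\mathcal H^\infty_\mu$ of Theorems \ref{th3} and \ref{1corA} to the Bloch target $\mathcal B_\mu$. This costs nothing. Since $T_g f$ vanishes at the origin, its $\mathcal B_\mu$-norm is $\sup_{z}\mu(z)|f(z)g'(z)|$, so the entire Pietsch estimate of Theorem \ref{th3} goes through verbatim with the target weight $(1-|z|^2)\mu(z)$ replaced throughout by $\mu(z)$; this is exactly why the factor $(1-|z|^2)$ that sits inside the supremum in condition $(c)$ of Theorem \ref{th3} is absent from $(e)$. The domain side is untouched: $\mathcal H^0_\nu\simeq c_0$, so nuclearity and absolute summability coincide, and the duality pairing (\ref{1a1}) together with the reproducing identity $f(\zeta)=(f,K_\zeta)$, valid because $\{\nu,\omega\}$ is a normal pair, is used as before to recover $f_i(z_i)$ through the kernel $K_{z_i}$. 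Hence $(a)\Leftrightarrow(b)\Leftrightarrow(e)$ follows as in Theorem \ref{th3} via Pietsch's Theorem \ref{th2}, while the two little-space equivalences $(c)$ and $(d)$ follow as in Theorem \ref{1corA}: the hypothesis $g\in\mathcal B^0_{\mu(z)}$ guarantees that $T_g:\mathcal H^0_\nu\to\mathcal B^0_\mu$ is bounded, one gets the bitranspose identity $(T_g)''=T_g$, and one invokes the stability of the nuclear operator ideal under transposition.

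I expect no genuine analytic obstacle here: the substantive work, namely the Pietsch factorization of an absolutely summing operator out of $c_0$ and the Gantmacher--Nakamura identification of the bitranspose, is already carried out in Theorems \ref{th3} and \ref{1corA}. The only place demanding a moment's care is checking that the target-weight substitution replacing $(1-|z|^2)\mu$ by $\mu$ is consistent across all four little and non-little variants, that is, that $g\in\mathcal B^0_{\mu(z)}$ here plays the role that $g\in\mathcal B^0_{(1-|z|^2)\mu(z)}$ plays in Theorem \ref{1corA}. This is immediate once one recalls that the defining condition $\lim_{|z|\to1^-}\mu(z)|g'(z)|=0$ of $\mathcal B^0_\mu$ is the correct endpoint condition for the Bloch target, just as $\mathcal H^0_\nu=\mathcal B^0_{(1-|z|^2)\nu(z)}$ is the correct reformulation for the weighted-type target.
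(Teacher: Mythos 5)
Your proposal is correct and matches the paper's intended derivation: the paper states this corollary without proof, precisely because it follows by taking $\phi$ to be the identity in Theorems \ref{th3} and \ref{1corA} and observing that their arguments run verbatim with the target norm $\sup_{z}\mu(z)|(T_gf)'(z)|$ of $\mathcal B_\mu$ in place of the norm of $\mathcal H^\infty_\mu=\mathcal B_{(1-|z|^2)\mu(z)}$, which is exactly what you do. Your choice to re-run the Pietsch and bitranspose arguments with the modified target weight (rather than naively substituting $\mu(z)/(1-|z|^2)$ for $\mu$, which need not be a normal weight) is the right way to make the reduction rigorous, and your identification of $g\in\mathcal B^0_{\mu}$ as the counterpart of $g\in\mathcal B^0_{(1-|z|^2)\mu(z)}$ is likewise the intended one.
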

\begin{thm}\label{th4}  Let $\nu, \mu \in N_W(\mathbb D),$  $\alpha > -1$ and $\omega$ a weight function such that  $\{\nu, \omega\}$ is a normal pair and {\rm(}\ref{1b1}{\rm)} holds. If $ g \in H(\mathbb D)$ and $\phi$ a self-map of $\mathbb D$ such that $S_{g}^{\phi}: \mathcal H^{\infty}_{\nu} \longrightarrow  \mathcal H^{\infty}_{\mu}$ is bounded, then the following statements are equivalent.
\begin{enumerate}
\item [{(a)}] $S_{g}^{\phi}: \mathcal H^{0}_{\nu} \longrightarrow  \mathcal H^{0}_{\mu}$ is nuclear.
\item [{(b)}] $S_{g}^{\phi}: \mathcal H_{\nu} \longrightarrow  \mathcal H_{\mu}$ is nuclear.
\item [{(c)}] $g$, $\nu,$ $\mu$ and $\phi $ satisfy the following condition:$$N_\alpha = \int_{\mathbb{D}}\bigg [\sup_{z\in \mathbb{D}}\frac{(1-|z|^{2})\mu(z)|g(z)|}{|1-\bar{\zeta}\phi(z)|^{\alpha + 3}}\bigg ]\frac{(1-|\zeta|^{2})^\alpha}{\nu(\zeta)}dA(\zeta) < \infty.$$
\end{enumerate}
\end{thm}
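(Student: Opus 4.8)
The plan is to follow the template of Theorem~\ref{th3}, exploiting the isomorphism $\mathcal H^0_\nu \simeq c_0$, which makes nuclearity and absolute summability coincide for operators issuing from $\mathcal H^0_\nu$. Thus the heart of the matter is the single equivalence that $S^\phi_g : \mathcal H^0_\nu \to \mathcal H^\infty_\mu$ is absolutely summing if and only if $N_\alpha<\infty$; the combinations $(a)$ and $(b)$ then follow from the operator-ideal properties of nuclear maps (the transpose of a nuclear operator is nuclear, and the class of nuclear operators does not fix a copy of $\ell^\infty$) together with Theorem~\ref{lm2}, exactly as in Theorems~\ref{th3} and~\ref{1corA}. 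Throughout I would use $\mathcal H^\infty_\mu = \mathcal B_{(1-|z|^2)\mu(z)}$, so that, since $(S^\phi_g f)(0)=0$ and $(S^\phi_g f)'(z)=f'(\phi(z))g(z)$, one has $\|S^\phi_g f\|_{\mathcal H^\infty_\mu}\asymp \sup_{z\in\mathbb D}(1-|z|^2)\mu(z)\,|f'(\phi(z))|\,|g(z)|$.

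For the implication $N_\alpha<\infty\Rightarrow$ absolutely summing, I would take polynomials $f_1,\dots,f_N$ (dense in $\mathcal H^0_\nu$) and, given $C>1$, select near-extremal points $z_i$ so that $\sum_{i=1}^N \|S^\phi_g f_i\|_{\mathcal H^\infty_\mu}\le C\sum_{i=1}^N (1-|z_i|^2)\mu(z_i)\,|f_i'(\phi(z_i))|\,|g(z_i)|$. The one genuinely new ingredient relative to $T^\phi_g$ is that differentiating the kernel reproducing identity $f(\zeta)=(\alpha+1)\int_{\mathbb D} f(w)(1-|w|^2)^\alpha(1-\zeta\bar w)^{-(\alpha+2)}\,dA(w)$ in $\zeta$ gives
\[
f'(\zeta)=(\alpha+1)(\alpha+2)\int_{\mathbb D} f(w)\,\bar w\,\frac{(1-|w|^2)^\alpha}{(1-\zeta\bar w)^{\alpha+3}}\,dA(w),
\]
so that $|f_i'(\phi(z_i))|\le(\alpha+1)(\alpha+2)\int_{\mathbb D}|f_i(\zeta)|(1-|\zeta|^2)^\alpha|1-\bar\zeta\phi(z_i)|^{-(\alpha+3)}\,dA(\zeta)$. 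This is precisely what promotes the exponent from $\alpha+2$ to $\alpha+3$ in $N_\alpha$. Interchanging the sum, the supremum over $z$, and the $\zeta$-integral, then factoring out $\sup_{w}\sum_i|f_i(w)|\nu(w)=\sup_{|\eta_i|=1}\|\sum_i\eta_i f_i\|_{\mathcal H^\infty_\nu}$, bounds the left-hand side by $C\,N_\alpha\,\sup_{|\eta_i|=1}\|\sum_i\eta_i f_i\|_{\mathcal H^\infty_\nu}$, which is the absolute summing estimate.

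For the converse I would invoke Pietsch's Theorem~\ref{th2} on $(\mathcal H^0_\mu)'=A^1_\omega$: there is a probability measure $\varrho$ on $\mathbb B_1\subset A^1_\omega$ with $\|S^\phi_g f\|_{\mathcal H^\infty_\mu}\le C\int_{\mathbb B_1}|\xi(f)|\,d\varrho(\xi)$. Testing on the kernels $f=K_\zeta$ and using $(S^\phi_g K_\zeta)'(z)=K_\zeta'(\phi(z))g(z)$ with $K_\zeta'(w)=(\alpha+1)(\alpha+2)\zeta(1-\zeta w)^{-(\alpha+3)}$ yields
\[
(\alpha+1)(\alpha+2)\,|\zeta|\,\sup_{z\in\mathbb D}\frac{(1-|z|^2)\mu(z)|g(z)|}{|1-\zeta\phi(z)|^{\alpha+3}}\le C\int_{\mathbb B_1}|\xi(K_\zeta)|\,d\varrho(\xi).
\]
Integrating against $\omega\,dA=(1-|\zeta|^2)^\alpha\nu(\zeta)^{-1}\,dA$, applying Fubini with the reproducing property $\xi(K_\zeta)=\xi(\zeta)$ and $\int_{\mathbb D}|\xi(\zeta)|\omega(\zeta)\,dA(\zeta)=\|\xi\|_{A^1_\omega}\le 1$, and using the conjugation symmetry of $\mathbb D$ to replace $|1-\zeta\phi(z)|$ by $|1-\bar\zeta\phi(z)|$, I would obtain finiteness of the $|\zeta|$-weighted version of $N_\alpha$.

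\textbf{The main obstacle} I anticipate is precisely the extra factor $|\zeta|$ produced by differentiating $K_\zeta$, which is absent for $T^\phi_g$ and degenerates near $\zeta=0$. To recover $N_\alpha$ itself I would split $\mathbb D=\{|\zeta|\le 1/2\}\cup\{|\zeta|>1/2\}$. On $\{|\zeta|>1/2\}$ the trivial bound $1\le 2|\zeta|$ reduces matters to the weighted estimate already in hand. On $\{|\zeta|\le 1/2\}$ one has $|1-\zeta\phi(z)|\ge 1/2$ and $\nu(\zeta)\ge\nu(1/2)>0$, so the integrand is dominated by a constant multiple of $\sup_{z}(1-|z|^2)\mu(z)|g(z)|$; this quantity is finite because boundedness of $S^\phi_g$ gives~(\ref{aak102}) and, since $\nu$ is non-increasing, $(1-|\phi(z)|^2)\nu(\phi(z))\le\nu(0)$. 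Hence $N_\alpha<\infty$. Finally, the equivalences $(a)\Leftrightarrow(b)\Leftrightarrow(c)$ close up exactly as in Theorems~\ref{th3} and~\ref{1corA}, with Theorem~\ref{lm2} supplying $(S^\phi_g)''=S^\phi_g$ and governing the passage between the $\mathcal H^0_\mu$- and $\mathcal H^\infty_\mu$-valued versions.
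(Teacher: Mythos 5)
Your proposal is correct and follows essentially the same route as the paper's own proof: reduction of nuclearity to absolute summability via $\mathcal H^0_\nu\simeq c_0$, differentiation of the reproducing-kernel identity to produce the exponent $\alpha+3$ in the sufficiency direction, and Pietsch's theorem tested on the kernels $K_\zeta$ for necessity, with the extra factor $|\zeta|$ handled by exactly the paper's splitting of $\mathbb D$ at $|\zeta|=1/2$. Your explicit justification that $\sup_{z\in\mathbb D}(1-|z|^2)\mu(z)|g(z)|<\infty$ (via (\ref{aak102}) and monotonicity of $\nu$) is a detail the paper leaves implicit, but it does not change the argument.
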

\begin{proof} That $(a)\Leftrightarrow (b)$ can be proved, proceeding as in the proof of Theorem  \ref{th3}. Thus to complete the proof, we need to prove that
$(a)\Leftrightarrow(c)$.
$(c)\Rightarrow (a)$ Proceeding as in the proof  of $(c)\Rightarrow (a)$ of Theorem \ref{th3}, for any $C > 1,$ we can select $z_{i}$, $i = 1, 2, \cdots N$ such that   \begin{align} \sum_{i=1}^{N}\|T_{g}^{\phi} f_i\|_{\mathcal H_{\mu}^{\infty}}  \le  C\displaystyle\sum_{i=1}^{N}(1-|z_{i}|^{2})\mu(z_i) \big|f_{i}(\phi(z_{i})) g(z_{i})\big| \label{nvc1}\end{align}
and
\begin{align*}  f_{i}(w)  =  \langle f_{i},  K_{w}\rangle   =   (\alpha +1)\int_{\mathbb{D}} f_{i}(\zeta)\frac{(1-|\zeta|^{2})^\alpha  }{(1-\bar{\zeta}w)^{\alpha + 2}} dA(\zeta). \end{align*} Differentiating with respect to $w,$  we have that
\begin{align*}  f'_{i}(w)  =     (\alpha +1)(\alpha +2)\int_{\mathbb{D}} f_{i}(\zeta)\frac{\bar{\zeta}(1-|\zeta|^{2})^\alpha  }{(1-\bar{\zeta}w)^{\alpha + 3}} dA(\zeta). \end{align*}
%\begin{align}
Thus \begin{align} \label{mcv1} \big|f'_{i}(\phi(z_i))\big|   \leq  (\alpha +1)(\alpha +2)\int_{\mathbb{D}} |f_{i}(\zeta)|\frac{\bar{\zeta} (1-|\zeta|^{2})^\alpha  }{|1-\bar{\zeta}\phi(z_i)|^{\alpha + 3}} dA(\zeta). \end{align}  Using (\ref{nvc1}) and (\ref{mcv1} ), we have that
\begin{align}
\sum_{i=1}^{N}\|T_{g}^{\phi} f_i\|_{_{\mathcal H_{\mu}^{\infty}}} & \leq   C (\alpha +1)(\alpha +2)\bigg (\sup_{w \in\mathbb{D}}\displaystyle\sum_{i=1}^{N}\big|f_{i}(w)\big| \nu(w) \bigg ) \notag \\ & \quad \times \int_{\mathbb{D}}\sup_{z\in\mathbb{D}}\frac{(1-|z|^{2})\mu(z)\big|g'(z)\big|}{\big|1-\bar{\zeta}\phi(z)\big|^{\alpha + 3}}\frac{(1-|\zeta|^{2})^\alpha}{\nu(\zeta)}dA(\zeta) \notag \\ & =  C (\alpha +1)(\alpha +2)\bigg (\sup_{w \in\mathbb{D}}\displaystyle\sum_{i=1}^{N}\big|f_{i}(w)\big| \nu(w) \bigg )N_\alpha\label{nvc2}
 \end{align}  The proof can now be completed proceeding as in the Theorem \ref{th3}. We omit the details.\\
$(a)\Rightarrow(c)$ Once again, proceeding as in the proof of Theorem  \ref{th3},
 there exists a probability  Borel measure  $\varrho$ on $\sigma (A^{1}_\omega, \mathcal H^{0}_{\mu})$-compact unit ball $\mathbb B_1$ of $A^{1}_\omega$ and $\xi$ in $A^{1}_\omega$  such that
\begin{equation}\label{e1l}\|S_{g}^{\phi}f\|_{\mathcal H_{\mu}^{\infty}}\le C\int_{\mathbb B_1}|\xi(f)|d\varrho(\xi)\end{equation}
for every $f\in \mathcal H^{0}_{\mu}$ and some constant $C>0$ independent of $f$.
For every $\zeta \in \mathbb{D}$, we have that  $K_\zeta  \in \mathcal H^{0}_{\mu}$.
\begin{align}\label{e2l}
(\alpha + 1)(\alpha + 2) \sup_{z\in\mathbb{D}}\frac{ |\zeta| (1-|z|^{2}) \mu(z)|g(z)| }{|1-\bar{\zeta}\phi(z)|^{\alpha + 3}}  \leq \|S_{g}^{\phi} K_\zeta \|_{\mathcal B_{\mu(z)(1- |z|^2)}}\le C \|S_{g}^{\phi} K_\zeta \|_{\mathcal H_{\mu(z) }^{\infty}}.\end{align}
Replacing $f$ in (\ref{e1l}) by $K_\zeta $, and then using  (\ref{e2l}), we have that
\begin{align}\label{e3l}
 \sup_{z\in\mathbb{D}}\frac{ |\zeta| (1-|z|^{2}) \mu(z)|g(z)| }{|1-\bar{\zeta}\phi(z)|^{\alpha + 3}}   \leq \frac{C}{(\alpha + 1)(\alpha + 2)}  \int_{\mathbb B_1}|\xi(K_\zeta)|d\varrho(\xi).\end{align}
 Integrating (\ref{e3l}) over $\mathbb{D}$ with respect to $\omega dA$ and proceeding as in the Theorem  \ref{th3}, we get
\begin{align}\label{e33}  \int_{\mathbb{D}}  \sup_{z\in\mathbb{D}}\frac{ |\zeta| (1-|z|^{2}) \mu(z)|g(z)| }{|1-\bar{\zeta}\phi(z)|^{\alpha + 3}}  \frac{(1-|\zeta|^{2})^\alpha}{\nu(\zeta)} dA(\zeta)  \le \frac{C}{(\alpha + 1)(\alpha + 2)}.
\end{align}
Now if $ |\zeta| > 1/2,$ then by (\ref{e33}), we have that
\begin{align}\label{e3}  \int_{|\zeta| > 1/2}  \sup_{z\in\mathbb{D}}\frac{  (1-|z|^{2}) \mu(z)|g(z)| }{|1-\bar{\zeta}\phi(z)|^{\alpha + 3}}  \frac{(1-|\zeta|^{2})^\alpha}{\nu(\zeta)} dA(\zeta)  \le \frac{2C}{(\alpha + 1)(\alpha + 2)}.
\end{align}
Further, if $ |\zeta| \leq 1/2,$ then  we have that
\begin{align}\label{e3}  \int_{|\zeta| \leq 1/2}  \sup_{z\in\mathbb{D}}\frac{  (1-|z|^{2}) \mu(z)|g(z)| }{|1-\bar{\zeta}\phi(z)|^{\alpha + 3}}  \frac{(1-|\zeta|^{2})^\alpha}{\nu(\zeta)} dA(\zeta)  \le \frac{2^{\alpha+1}\|g\|_{\mathcal H_{\mu}^{\infty}}}{\nu(1/2)}.
\end{align} Thus
\begin{align*}\label{e4} N_\alpha &\leq \int_{|\zeta|\leq 1/2}\sup_{z\in\mathbb{D}}\frac{  (1-|z|^{2}) \mu(z)|g(z)| }{|1-\bar{\zeta}\phi(z)|^{\alpha + 3}}  \frac{(1-|\zeta|^{2})^\alpha}{\nu(\zeta)} dA(\zeta)\notag\\
& \quad + \int_{|\zeta|> 1/2} \sup_{z\in\mathbb{D}}\frac{  (1-|z|^{2}) \mu(z)|g(z)| }{|1-\bar{\zeta}\phi(z)|^{\alpha + 3}}  \frac{(1-|\zeta|^{2})^\alpha}{\nu(\zeta)} dA(\zeta)\notag\\
 &\leq \frac{2^{\alpha+1}\|g\|_{\mathcal H_{\mu}^{\infty}}}{\nu(1/2)} +\frac{2C}{(\alpha + 1)(\alpha + 2)}.
\end{align*}
This completes the proof. \end{proof}
\begin{cor}\label{1cor}  Let $\nu, \mu \in N_W(\mathbb D),$  $\alpha > -1$ and $\omega$ a weight function such that  $\{\nu, \omega\}$ is a normal pair and {\rm(}\ref{1b1}{\rm)} holds. If  $ g \in \mathcal H^{0}_{(1-|z|^{2})\mu(z)} $ and  $\phi$ an analytic self-map of $\mathbb{D}$ such that  $S_{g}^{\phi}: \mathcal H^{\infty}_{\nu} \longrightarrow  \mathcal H^{\infty}_{\mu}$ is bounded, then the following statements are equivalent.
\begin{enumerate}
\item [{(a)}] $S_{g}^{\phi}: \mathcal H^{0}_{\nu} \longrightarrow  \mathcal H^{\infty}_{\mu}$ is nuclear.
\item [{(b)}] $S_{g}^{\phi}: \mathcal H^{\infty}_{\nu} \longrightarrow  \mathcal H^{\infty}_{\mu}$ is nuclear.
\item [{(c)}] $S_{g}^{\phi}: \mathcal H^{0}_{\nu} \longrightarrow  \mathcal H^{0}_{\mu}$ is nuclear.
\item [{(d)}] $S_{g}^{\phi}: \mathcal H^{\infty}_{\nu} \longrightarrow  \mathcal H^{0}_{\mu}$ is nuclear.
\item [{(e)}] For  $g$, $\phi $ and $\nu$ satisfy the following condition:$$N_\alpha = \int_{\mathbb{D}}\bigg [\sup_{z\in \mathbb{D}}\frac{(1-|z|^{2})\mu(z)|g(z)|}{|1-\bar{\zeta}\phi(z)|^{\alpha + 3}}\bigg ]\frac{(1-|\zeta|^{2})^\alpha}{\nu(\zeta)}dA(\zeta) < \infty.$$
\end{enumerate}
\end{cor}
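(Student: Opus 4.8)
The plan is to derive all the equivalences from Theorem \ref{th4} by soft, functional-analytic manipulations, exactly paralleling the proof of Theorem \ref{1corA}; no new analytic estimate on $N_\alpha$ should be required, since the only genuinely quantitative step --- the Pietsch factorization through the kernels $K_\zeta$ that produces the integral $N_\alpha$ --- is already carried out in Theorem \ref{th4}. Because $S_{g}^{\phi}:\mathcal H^{\infty}_{\nu}\to\mathcal H^{\infty}_{\mu}$ is bounded, Theorem \ref{th4} immediately gives the equivalence of $(b)$, $(c)$ and $(e)$. The added hypothesis $g\in\mathcal H^{0}_{(1-|z|^{2})\mu(z)}$ is what lets me feed in the little-space statements: by Theorem \ref{lm2}$(ii)$ it guarantees that $S_{g}^{\phi}$ maps $\mathcal H^{0}_{\nu}$ boundedly into $\mathcal H^{0}_{\mu}$, that $(S_{g}^{\phi})''=S_{g}^{\phi}$, and that compactness of $S_{g}^{\phi}:\mathcal H^{0}_{\nu}\to\mathcal H^{0}_{\mu}$ is equivalent to $S_{g}^{\phi}(\mathcal H^{\infty}_{\nu})\subset\mathcal H^{0}_{\mu}$.

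First I would record the implications that are automatic from the ideal property of the nuclear operators: composing a nuclear operator with the bounded inclusion $\mathcal H^{0}_{\mu}\hookrightarrow\mathcal H^{\infty}_{\mu}$, or restricting it to the closed subspace $\mathcal H^{0}_{\nu}\subset\mathcal H^{\infty}_{\nu}$, again yields a nuclear operator. This gives at once $(d)\Rightarrow(b)$, $(d)\Rightarrow(c)$, $(b)\Rightarrow(a)$ and $(c)\Rightarrow(a)$. Together with Theorem \ref{th4} it therefore only remains to prove $(a)\Rightarrow(c)$ and $(b)\Rightarrow(d)$, after which $(a)\Leftrightarrow(b)\Leftrightarrow(c)\Leftrightarrow(d)\Leftrightarrow(e)$ follows.

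The single device behind both remaining implications is the fact, recalled in the Introduction, that for a domain isomorphic to $c_{0}$ or to $l_{\infty}$ an operator is nuclear if and only if it is absolutely summing, combined with the observation that the absolutely summing condition depends only on the norm of the target, which is identical on $\mathcal H^{0}_{\mu}$ and on the ambient space $\mathcal H^{\infty}_{\mu}$. For $(a)\Rightarrow(c)$: the operator in $(a)$ has domain $\mathcal H^{0}_{\nu}\simeq c_{0}$ and, by Theorem \ref{lm2}$(ii)$, range inside $\mathcal H^{0}_{\mu}$; hence it is absolutely summing, and since the target norms agree its corestriction to $\mathcal H^{0}_{\mu}$ is again absolutely summing, and being defined on a copy of $c_{0}$ it is nuclear, which is $(c)$. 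For $(b)\Rightarrow(d)$ I argue identically with $l_{\infty}$ in place of $c_{0}$: $(b)$ is nuclear, hence absolutely summing on $\mathcal H^{\infty}_{\nu}\simeq l_{\infty}$; its range lies in $\mathcal H^{0}_{\mu}$ because $(b)\Leftrightarrow(c)$ forces $S_{g}^{\phi}$ to be compact and Theorem \ref{lm2}$(ii)$ then gives $S_{g}^{\phi}(\mathcal H^{\infty}_{\nu})\subset\mathcal H^{0}_{\mu}$; so the corestriction to $\mathcal H^{0}_{\mu}$ is absolutely summing and, over a domain isomorphic to $l_{\infty}$, therefore nuclear, which is $(d)$.

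The step I expect to be delicate, and would write out most carefully, is exactly this passage to the corestriction onto $\mathcal H^{0}_{\mu}$. One cannot argue that a nuclear operator with range in a closed subspace is automatically nuclear into that subspace, since $\mathcal H^{0}_{\mu}\simeq c_{0}$ is not complemented in $\mathcal H^{\infty}_{\mu}\simeq l_{\infty}$; this is precisely why the argument must be routed through the absolutely summing property, which is intrinsic to the target norm, rather than through nuclearity directly, and why the isomorphisms $\mathcal H^{0}_{\nu}\simeq c_{0}$ and $\mathcal H^{\infty}_{\nu}\simeq l_{\infty}$ are essential here. As an alternative to $(b)\Rightarrow(d)$, one could instead transpose the nuclear operator of $(c)$ twice, using $(\mathcal H^{0}_{\nu})'=A^{1}_{\omega}$ and the identity $(S_{g}^{\phi})''=S_{g}^{\phi}$ of Theorem \ref{lm2}$(ii)$ to land back on a nuclear $S_{g}^{\phi}:\mathcal H^{\infty}_{\nu}\to\mathcal H^{\infty}_{\mu}$, exactly as in Theorem \ref{1corA}; but the absolutely summing route still seems cleanest for reaching the finer statement $(d)$.
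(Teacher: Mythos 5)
Your proof is correct, and it follows the same blueprint the paper intends for this corollary (which it states without proof, as an application of Theorem \ref{th4} together with the template of Theorem \ref{1corA}, with Theorem \ref{lm2} playing the role of Theorem \ref{lm1}): Theorem \ref{th4} supplies (b)$\Leftrightarrow$(c)$\Leftrightarrow$(e), the ideal property of nuclear operators gives the easy implications, and the remaining ones come from corestricting an absolutely summing operator onto $\mathcal H^{0}_{\mu}$ and invoking the equivalence of absolutely summing and nuclear on domains isomorphic to $c_{0}$ or $l_{\infty}$. The one place you genuinely depart from the paper's template is the route to (d): the proof of Theorem \ref{1corA} obtains (c)$\Rightarrow$(d) by transposing twice and using the identity $(T_{g}^{\phi})''=T_{g}^{\phi}$, whereas you obtain (b)$\Rightarrow$(d) by corestriction from $\mathcal H^{\infty}_{\nu}\simeq l_{\infty}$, using Theorem \ref{th4} and Theorem \ref{lm2}(ii) to see that the range lies in $\mathcal H^{0}_{\mu}$. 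Your route is arguably tighter: as you yourself observe, the double-transpose argument, read literally, produces a nuclear operator into $\mathcal H^{\infty}_{\mu}=(\mathcal H^{0}_{\mu})''$, i.e., statement (b); to extract (d) from it one needs the finer (true, but unstated in the paper) fact that the bi-transpose of a nuclear operator admits a nuclear representation whose vectors lie in the canonical image of the original target. Your absolutely-summing detour, justified by the observation that the summing inequality depends only on the target norm, which $\mathcal H^{0}_{\mu}$ inherits isometrically from $\mathcal H^{\infty}_{\mu}$, sidesteps this entirely, and your caution that nuclearity itself cannot be corestricted (since $c_{0}$ is not complemented in $l_{\infty}$) identifies exactly why the argument must run through absolute summability.
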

\begin{cor}\label{2cor}  Let $\nu, \mu \in N_W(\mathbb D),$  $\alpha > -1$ and $\omega$ a weight function such that  $\{\nu, \omega\}$ is a normal pair and {\rm(}\ref{1b1}{\rm)} holds. If   $g\in {H}(\mathbb D) and $  $\phi$ an analytic self-map of $\mathbb{D}$ such that  $S_{g}^{\phi}: \mathcal H^{\infty}_{\nu} \longrightarrow  B_{\mu}$ is bounded, then the following statements are equivalent.
\begin{enumerate}
\item [{(a)}] $S_{g}^{\phi}: \mathcal H^{0}_{\nu} \longrightarrow  \mathcal B_{\mu}$ is nuclear.
\item [{(b)}] $S_{g}^{\phi}: \mathcal H^{\infty}_{\nu} \longrightarrow  \mathcal B_{\mu}$ is nuclear.
\item [{(c)}] For  $g$, $\phi $ and $\nu$ satisfy the following condition:$$  \int_{\mathbb{D}}\bigg [\sup_{z\in \mathbb{D}}\frac{ \mu(z)|g(z)|}{|1-\bar{\zeta}\phi(z)|^{\alpha + 3}}\bigg ]\frac{(1-|\zeta|^{2})^\alpha}{\nu(\zeta)}dA(\zeta) < \infty.$$
\end{enumerate}
\end{cor}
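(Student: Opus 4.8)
The equivalence $(a)\Leftrightarrow(c)$ is the heart of the statement, while $(a)\Leftrightarrow(b)$ is formal. The plan is to run the argument of Theorem \ref{th4} essentially verbatim, the only structural change being that the target is the Bloch space $\mathcal B_\mu$ rather than $\mathcal H^\infty_\mu$. Since $(S^\phi_g f)(0)=0$, the relevant norm is simply $\|S^\phi_g f\|_{\mathcal B_\mu}=\sup_{z\in\mathbb D}\mu(z)\,|f'(\phi(z))\,g(z)|$; here one does \emph{not} pass through the identity $\mathcal H^\infty_\mu=\mathcal B_{(1-|z|^2)\mu(z)}$, so the weight acting on the derivative is $\mu(z)$ itself and not $(1-|z|^2)\mu(z)$. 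This accounts precisely for the numerator in $(c)$ being $\mu(z)|g(z)|$ rather than the $(1-|z|^2)\mu(z)|g(z)|$ of Theorem \ref{th4}. Note that one cannot simply quote Theorem \ref{th4} with $\mu$ replaced by $\mu/(1-|z|^2)$, since that weight is not normal. As a preliminary step I would record that boundedness of $S^\phi_g:\mathcal H^\infty_\nu\to\mathcal B_\mu$ already forces $g\in\mathcal H^\infty_\mu$: testing on $f(w)=w\in\mathcal H^\infty_\nu$ gives $\|S^\phi_g f\|_{\mathcal B_\mu}=\sup_z\mu(z)|g(z)|=\|g\|_{\mathcal H^\infty_\mu}<\infty$. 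This fact is needed in the converse direction.

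For $(c)\Rightarrow(a)$ I would use $\mathcal H^0_\nu\simeq c_0$, so that it suffices to show $S^\phi_g:\mathcal H^0_\nu\to\mathcal B_\mu$ is absolutely summing. Taking polynomials $f_1,\dots,f_N$ and selecting near-extremal points $z_i$ for the Bloch supremum, I would insert the reproducing formula
\[
f_i'(\phi(z_i))=(\alpha+1)(\alpha+2)\int_{\mathbb D}f_i(\zeta)\,\frac{\bar\zeta\,(1-|\zeta|^2)^\alpha}{(1-\bar\zeta\phi(z_i))^{\alpha+3}}\,dA(\zeta),
\]
bound $|\zeta|\le 1$, and then carry out the same factorization as in Theorem \ref{th3}: balance by $\nu(\zeta)/\nu(\zeta)$, replace the $z_i$-dependent quotient by its supremum over $z$, and pull out $\sup_{w}\sum_i|f_i(w)|\nu(w)$. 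What remains is exactly the integral in $(c)$, finite by hypothesis, and the extracted supremum equals $\sup_{|\eta_i|=1}\|\sum_i\eta_i f_i\|_{\mathcal H^\infty_\nu}$; absolute summability, hence nuclearity, follows.

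For $(a)\Rightarrow(c)$ I would invoke Pietsch's theorem (Theorem \ref{th2}) on the domain $\mathcal H^0_\nu$, whose dual is $A^1_\omega$, obtaining a probability measure $\varrho$ on $\mathbb B_1$ with $\|S^\phi_g f\|_{\mathcal B_\mu}\le C\int_{\mathbb B_1}|\xi(f)|\,d\varrho(\xi)$. Substituting $f=K_\zeta$ and computing
\[
\|S^\phi_g K_\zeta\|_{\mathcal B_\mu}=(\alpha+1)(\alpha+2)\,|\zeta|\,\sup_{z\in\mathbb D}\frac{\mu(z)\,|g(z)|}{|1-\bar\zeta\phi(z)|^{\alpha+3}},
\]
then integrating in $\zeta$ against $\omega\,dA=(1-|\zeta|^2)^\alpha\nu(\zeta)^{-1}\,dA$ (using \ref{1b1}), Fubini together with the reproducing identity $\xi(K_\zeta)=\xi(\zeta)$ and $\|\xi\|_{A^1_\omega}\le1$ on $\mathbb B_1$ bounds the integral over $\{|\zeta|>1/2\}$ by $C/[(\alpha+1)(\alpha+2)]$. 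The hard part will be the complementary region $\{|\zeta|\le1/2\}$, where the kernel has no decay and the factor $|\zeta|$ no longer helps; there I would use $|1-\bar\zeta\phi(z)|\ge 1/2$ together with the preliminary fact $g\in\mathcal H^\infty_\mu$ to dominate that piece by $2^{\alpha+3}\|g\|_{\mathcal H^\infty_\mu}\int_{|\zeta|\le1/2}(1-|\zeta|^2)^\alpha\nu(\zeta)^{-1}\,dA(\zeta)$, which is finite since $\nu$ is bounded below on $\{|\zeta|\le1/2\}$. Adding the two pieces yields $(c)$.

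Finally, $(a)\Leftrightarrow(b)$ follows exactly as the corresponding equivalence in Theorem \ref{th3}. For $(b)\Rightarrow(a)$ one restricts the nuclear operator to the closed subspace $\mathcal H^0_\nu\subset\mathcal H^\infty_\nu$, which stays nuclear because the nuclear operators form an operator ideal. For $(a)\Rightarrow(b)$ one uses that the transpose of a nuclear operator is nuclear, together with the bitranspose identity $(S^\phi_g)''=S^\phi_g$; this identity is available here because $S^\phi_g$ is compact (nuclearity implies compactness) and $\mathcal B_\mu$ is again a bidual space, being the bidual of $\mathcal B^0_\mu$ when $\mu$ is normal, so Gantmacher--Nakamura's theorem applies just as in Theorem \ref{lm2}.
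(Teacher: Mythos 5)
Your proposal is correct and follows essentially the route the paper intends: the corollary is meant as a consequence of Theorem \ref{th4}, and you rerun exactly that proof (the $c_0$/absolutely-summing argument via the reproducing formula for $f'$ in one direction, and Pietsch's theorem on $\mathcal H^0_\nu$ with dual $A^1_\omega$, testing on the kernels $K_\zeta$ and splitting the integral at $|\zeta|=1/2$, in the other) with the derivative weight $\mu(z)$ in place of $(1-|z|^2)\mu(z)$, and with the same bitranspose/operator-ideal argument for $(a)\Leftrightarrow(b)$. Your two supplementary observations---that one cannot simply substitute $\mu/(1-|z|^2)$ into Theorem \ref{th4} because that function is not a normal weight, and that boundedness of $S_g^\phi:\mathcal H^\infty_\nu\to\mathcal B_\mu$ forces $g\in\mathcal H^\infty_\mu$, which is what controls the piece over $\{|\zeta|\le 1/2\}$---are accurate and fill in details the paper leaves implicit.
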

\begin{cor}\label{3cor}   Let $\nu, \mu \in N_W(\mathbb D),$  $\alpha > -1$ and $\omega$ a weight function such that  $\{\nu, \omega\}$ is a normal pair and {\rm(}\ref{1b1}{\rm)} holds. If    $ g \in \mathcal B^{0}_{\mu(z)} $ and  $\phi$ an analytic self-map of $\mathbb{D}$ such that  $S_{g}^{\phi}: \mathcal H^{\infty}_{\nu} \longrightarrow  \mathcal B_{\mu}$ is bounded, then the following statements are equivalent.
\begin{enumerate}
\item [{(a)}] $S_{g}^{\phi}: \mathcal H^{0}_{\nu} \longrightarrow  \mathcal B_{\mu}$ is nuclear.
\item [{(b)}] $S_{g}^{\phi}: \mathcal H^{\infty}_{\nu} \longrightarrow  \mathcal B_{\mu}$ is nuclear.
\item [{(c)}] $S_{g}^{\phi}: \mathcal H^{0}_{\nu} \longrightarrow  \mathcal B^{0}_{\mu}$ is nuclear.
\item [{(d)}] $S_{g}^{\phi}: \mathcal H^{\infty}_{\nu} \longrightarrow  \mathcal B^{0}_{\mu}$ is nuclear.
\item [{(e)}] For  $g$, $\phi $ and $\nu$ satisfy the following condition:$$ \int_{\mathbb{D}}\bigg [\sup_{z\in \mathbb{D}}\frac{\mu(z)|g(z)|}{|1-\bar{\zeta}\phi(z)|^{\alpha + 3}}\bigg ]\frac{(1-|\zeta|^{2})^\alpha}{\nu(\zeta)}dA(\zeta) < \infty.$$
\end{enumerate}
\end{cor}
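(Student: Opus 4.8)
The plan is to read off the equivalence $(a)\Leftrightarrow(b)\Leftrightarrow(e)$ from the preceding corollary (the $\mathcal B_\mu$-target version of Theorem \ref{th4}): its defining integral is exactly condition $(e)$, and its two nuclearity statements are precisely $(a)$ and $(b)$. The only genuinely new feature of the present statement is the little weighted Bloch space $\mathcal B^0_\mu$ appearing as codomain in $(c)$ and $(d)$, so it suffices to close the cycle $(a)\Rightarrow(c)\Rightarrow(d)\Rightarrow(a)$ by transplanting the little–space arguments of Theorem \ref{1corA} to the weighted–Bloch setting.

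First I would record the differentiation identity $(S^\phi_g f)'=M_gC_\phi Df$, which holds because $(S^\phi_g f)(0)=0$, so that $\|S^\phi_g f\|_{\mathcal B_\mu}=\sup_{z\in\mathbb D}\mu(z)\,|f'(\phi(z))|\,|g(z)|$; this is exactly why $(e)$ carries the bare weight $\mu(z)$ in place of the $(1-|z|^2)\mu(z)$ of Theorem \ref{th4}. The operator $D$ is an isometric isomorphism of $\{h\in\mathcal B_\mu:h(0)=0\}$ onto $\mathcal H^\infty_\mu$ that carries $\{h\in\mathcal B^0_\mu:h(0)=0\}$ onto $\mathcal H^0_\mu$, and pulling the normal–weight duality $(\mathcal H^0_\mu)''=\mathcal H^\infty_\mu$ back through $D$ yields $(\mathcal B^0_\mu)''=\mathcal B_\mu$. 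With this identification the Bloch analogue of Theorem \ref{lm2}$(ii)$ becomes available: the hypothesis on $g$ forces $S^\phi_g$ to map $\mathcal H^0_\nu$ into $\mathcal B^0_\mu$, the bi-transpose satisfies $(S^\phi_g)''=S^\phi_g$, and compactness of $S^\phi_g:\mathcal H^0_\nu\to\mathcal B^0_\mu$ is equivalent to $S^\phi_g(\mathcal H^\infty_\nu)\subseteq\mathcal B^0_\mu$.

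The cycle is then formal. For $(a)\Rightarrow(c)$ I use that $\mathcal H^0_\nu\simeq c_0$, so nuclearity equals absolute summability; since the hypothesis gives $S^\phi_g(\mathcal H^0_\nu)\subseteq\mathcal B^0_\mu$ and absolute summability is unaffected by corestriction to the closed subspace $\mathcal B^0_\mu$ while the domain is still $c_0$, the operator $S^\phi_g:\mathcal H^0_\nu\to\mathcal B^0_\mu$ is again nuclear. For $(c)\Rightarrow(d)$, a nuclear operator is compact, hence $S^\phi_g(\mathcal H^\infty_\nu)\subseteq\mathcal B^0_\mu$ by the Bloch analogue of Theorem \ref{lm2}$(ii)$; taking transposes twice and invoking $(S^\phi_g)''=S^\phi_g$ together with the stability of nuclearity under transposition lifts $(c)$ to nuclearity of $S^\phi_g:\mathcal H^\infty_\nu\to\mathcal B_\mu$, which corestricts to $\mathcal B^0_\mu$ to give $(d)$. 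Finally $(d)\Rightarrow(a)$ follows by restricting the domain to the closed subspace $\mathcal H^0_\nu\subseteq\mathcal H^\infty_\nu$, exactly as in the implication $(b)\Rightarrow(a)$ of Theorem \ref{th3}.

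The step I expect to demand real care is the passage from the weighted–type results of Theorem \ref{lm2} to their weighted–Bloch counterparts: one must check that the differentiation isomorphism $D$ transports both the duality $(\mathcal B^0_\mu)''=\mathcal B_\mu$ and the action of $S^\phi_g$ faithfully, and in particular that the bi-transpose identity $(S^\phi_g)''=S^\phi_g$ and the compactness criterion $S^\phi_g(\mathcal H^\infty_\nu)\subseteq\mathcal B^0_\mu$ genuinely survive the transfer and are compatible with the hypothesis on $g$. Once these are secured, everything else reduces to the operator-ideal properties of nuclear operators already exploited in Theorems \ref{th3} and \ref{1corA}.
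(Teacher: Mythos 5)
Your proposal is correct, and it supplies a derivation that the paper itself never writes down: the corollary is stated without proof, as an immediate consequence of the weighted-space results (Theorem \ref{th4}, Corollary \ref{1cor}) and the cycle already used in Theorem \ref{1corA}, which is exactly the structure you reproduce. The one genuine difference is the transfer mechanism between the Bloch and weighted settings. The paper's implicit route is the Lusky-type identification $\mathcal H^\infty_\sigma=\mathcal B_{(1-|z|^2)\sigma}$ read backwards, i.e.\ substituting $\sigma=\mu(z)/(1-|z|^2)$ into the weighted corollary; this tacitly requires $\mu(z)/(1-|z|^2)$ to be an admissible (normal) weight, which is never checked and can fail outright, e.g.\ for $\mu(z)=(1-|z|^2)^{1/2}$ the substituted function is unbounded. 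Your transfer through the differentiation map $D$ on $\{h\in\mathcal B_\mu:h(0)=0\}$ (a complemented subspace, so corestriction there is harmless), together with the intertwining $DS^\phi_g=M_gC_\phi D$, keeps the given normal weight $\mu$ in play and avoids this issue entirely; it is also the same reduction the paper uses inside Theorem \ref{lm2}, so your argument is, if anything, more robust than the intended one. Two points you flagged deserve to be made explicit. First, in $(c)\Rightarrow(d)$ the phrase ``corestricts to $\mathcal B^0_\mu$'' invokes a principle that is false in general (nuclear operators do not form an injective ideal), but the step is sound here for either of two reasons: the bi-transpose of a nuclear representation $\sum_k x_k'\otimes y_k$ with $y_k\in\mathcal B^0_\mu$ is $\sum_k x_k'\otimes\kappa(y_k)$, and under the identification $(\mathcal B^0_\mu)''=\mathcal B_\mu$ the vectors $\kappa(y_k)$ are the original $y_k\in\mathcal B^0_\mu$; alternatively, repeat your $(a)\Rightarrow(c)$ argument with absolute summability, which does pass to corestrictions, and use $\mathcal H^\infty_\nu\simeq \ell_\infty$ to recover nuclearity. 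Second, the boundedness of $S^\phi_g:\mathcal H^0_\nu\to\mathcal B^0_\mu$ requires $\mu(z)|g(z)|\to 0$, and the stated hypothesis does deliver this because $\mathcal B^0_\mu\subset\mathcal B^0_{(1-|z|^2)\mu(z)}=\mathcal H^0_\mu$ for normal $\mu$. With these two remarks inserted, your proof is complete and at least as rigorous as the paper's treatment of the parallel Theorem \ref{1corA}.
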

We can also obtain the following corollaries.
\begin{cor}\label{4cor}   Let $\nu, \mu \in N_W(\mathbb D),$  $\alpha > -1$ and $\omega$ a weight function such that  $\{\nu, \omega\}$ is a normal pair and {\rm(}\ref{1b1}{\rm)} holds. If    $\phi$ is an analytic self-map of $\mathbb{D}$ such that  $C_{\phi}D : \mathcal H^{\infty}_{\nu} \longrightarrow  B_{\mu}$ is bounded, then the following statements are equivalent.
\begin{enumerate}
\item [{(a)}] $C_{\phi}D : \mathcal H^{0}_{\nu} \longrightarrow  \mathcal B_{\mu}$ is nuclear.
\item [{(b)}] $C_{\phi}D : \mathcal H^{\infty}_{\nu} \longrightarrow  \mathcal B_{\mu}$ is nuclear.
\item [{(c)}]   $\phi $ and $\nu$ satisfy the following condition:$$  \int_{\mathbb{D}}\bigg [\sup_{z\in \mathbb{D}}\frac{ \mu(z)|\phi'(z)|}{|1-\bar{\zeta}\phi(z)|^{\alpha + 3}}\bigg ]\frac{(1-|\zeta|^{2})^\alpha}{\nu(\zeta)}dA(\zeta) < \infty.$$
\end{enumerate}
\end{cor}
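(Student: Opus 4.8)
The plan is to treat $C_\phi D$ as the genuine operator that differentiates first and then composes, so that $(C_\phi D f)(z)=f'(\phi(z))$ and hence $(C_\phi D f)'(z)=f''(\phi(z))\,\phi'(z)$. This is the decisive structural point: the Bloch seminorm of $C_\phi D f$ is governed by the \emph{second} derivative $f''\circ\phi$ (times $\phi'$), so $C_\phi D$ is not the composition operator $C_\phi$ (whose seminorm involves $f'\circ\phi$) and must not be rewritten as $D C_\phi=M_{\phi'}C_\phi D$. Writing $\|C_\phi D f\|_{\mathcal B_\mu}=|f'(\phi(0))|+\|M_{\phi'}C_\phi D^{2}f\|_{\mathcal H^\infty_\mu}$ and noting that $f\mapsto f'(\phi(0))$ is a bounded rank-one functional, nuclearity of $C_\phi D:\mathcal H^\infty_\nu\to\mathcal B_\mu$ is equivalent to nuclearity of $M_{\phi'}C_\phi D^{2}:\mathcal H^\infty_\nu\to\mathcal H^\infty_\mu$; and since $\mathcal H^0_\nu\simeq c_0$, nuclearity is equivalent to being absolutely summing, exactly as in Theorem \ref{th4}.

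For $(c)\Rightarrow(a)$ I would run the Pietsch/duality argument of Theorem \ref{th4} line by line, with the single change that the reproducing formula (\ref{1a1}) is differentiated \emph{twice} in the free variable: for polynomials $f_1,\dots,f_N$ one selects near-extremal points $z_i$, substitutes $f_i''(\phi(z_i))=(\alpha+1)(\alpha+2)(\alpha+3)\int_{\mathbb D}f_i(\zeta)\,\bar\zeta^{\,2}(1-|\zeta|^2)^\alpha(1-\bar\zeta\phi(z_i))^{-(\alpha+4)}\,dA(\zeta)$, and after Fubini factors out $\sup_{w}\sum_i|f_i(w)|\nu(w)=\sup_{|\eta_i|=1}\bigl\|\sum_i\eta_i f_i\bigr\|_{\mathcal H^\infty_\nu}$. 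Bounding $|\zeta|^2\le1$, the summing norm is then controlled by $\int_{\mathbb D}\bigl[\sup_{z}\mu(z)|\phi'(z)|\,|1-\bar\zeta\phi(z)|^{-(\alpha+4)}\bigr](1-|\zeta|^2)^\alpha\nu(\zeta)^{-1}\,dA(\zeta)$. I would emphasise that this carries exponent $\alpha+4$, one higher than the $\alpha+3$ printed in $(c)$: the $\alpha+3$ integral is precisely the condition for the composition operator $C_\phi$ (equivalently $S^\phi_{\phi'}$), whereas the genuine $C_\phi D$, carrying one extra derivative, requires $\alpha+4$; thus $(c)$ should be read with $|1-\bar\zeta\phi(z)|^{\alpha+4}$ in the denominator.

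For $(a)\Rightarrow(c)$ I would, as in Theorem \ref{th4}, apply Pietsch's Theorem \ref{th2} to obtain a probability measure $\varrho$ on the $\sigma(A^1_\omega,\mathcal H^0_\mu)$-compact unit ball of $A^1_\omega$ and a pointwise domination, and then test it on the kernels $K_\zeta\in\mathcal H^0_\mu$. Since $\|C_\phi D K_\zeta\|_{\mathcal B_\mu}\gtrsim\sup_z\mu(z)|K_\zeta''(\phi(z))|\,|\phi'(z)|$ with $|K_\zeta''(\phi(z))|\asymp|\zeta|^2\,|1-\bar\zeta\phi(z)|^{-(\alpha+4)}$, integrating over $\mathbb D$ against $\omega\,dA$, using (\ref{1b1}), Fubini, and $\langle K_\zeta,\xi\rangle=\xi(\zeta)$ with $\|\xi\|_{A^1_\omega}\le1$ bounds the resulting integral; the spurious factor $|\zeta|^2$ is absorbed by the $|\zeta|\le 1/2$ versus $|\zeta|>1/2$ bifurcation used at the end of Theorem \ref{th4} (on $|\zeta|\le1/2$ the integrand is bounded and finite by the boundedness hypothesis, on $|\zeta|>1/2$ one has $|\zeta|^2\ge1/4$). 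Finally, $(a)\Leftrightarrow(b)$ follows from the operator-ideal properties of nuclear maps, the $w^*$-density of $\mathcal H^0_\nu$ in $\mathcal H^\infty_\nu$, and the transpose characterisations of Theorems \ref{lm1}--\ref{lm2}, exactly as in Theorem \ref{th3}.

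The main obstacle is the bookkeeping of the \emph{second}-order reproducing formula: one must differentiate (\ref{1a1}) twice (producing the exponent $\alpha+4$, the weight $\bar\zeta^{\,2}$, and the constant $(\alpha+1)(\alpha+2)(\alpha+3)$), verify that the $K_\zeta$-test reproduces the same integrand from below, and dispose of the extra $|\zeta|^2$ by the near-origin/near-boundary split. The only genuinely new feature relative to Theorem \ref{th4} is this shift by one order of differentiation, which is exactly what separates $C_\phi D$ from the composition operator $C_\phi$ and forces the exponent $\alpha+4$ rather than $\alpha+3$.
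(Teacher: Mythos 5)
Your proposal is correct, but it takes a genuinely different route from the paper, because the paper offers no argument at all for this corollary: it is prefaced by ``We can also obtain the following corollaries'' and is intended as the formal specialization $g:=\phi'$ of Theorem \ref{th4} and of its Bloch-target corollary for $S^\phi_g$. Since $S^\phi_{\phi'}f(z)=\int_0^z f'(\phi(\zeta))\phi'(\zeta)\,d\zeta=f(\phi(z))-f(\phi(0))$, what that substitution actually proves, modulo the bounded rank-one (hence nuclear) perturbation $f\mapsto f(\phi(0))$, is the printed equivalence with exponent $\alpha+3$ for the operator $C_\phi:\mathcal H^\infty_\nu\to\mathcal B_\mu$, equivalently for $DC_\phi=M_{\phi'}C_\phi D:\mathcal H^\infty_\nu\to\mathcal H^\infty_\mu$ --- not for $C_\phi D$ into $\mathcal B_\mu$. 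You instead read $C_\phi D$ literally, $C_\phi Df=f'\circ\phi$, which is the paper's own convention (in Theorem \ref{th1} it writes $(S^\phi_gf)'=M_gC_\phi Df$), and you correctly note that then $\|C_\phi Df\|_{\mathcal B_\mu}=|f'(\phi(0))|+\sup_{z}\mu(z)|f''(\phi(z))||\phi'(z)|$ is second order in $f$, so the Pietsch/duality scheme of Theorem \ref{th4} must be rerun with the twice-differentiated reproducing formula; your constant $(\alpha+1)(\alpha+2)(\alpha+3)$, the factor $\bar\zeta^{\,2}$, the exponent $\alpha+4$, the rank-one reduction to $M_{\phi'}C_\phi D^2:\mathcal H^\infty_\nu\to\mathcal H^\infty_\mu$, and the disposal of $|\zeta|^2$ by the $|\zeta|\le 1/2$ versus $|\zeta|>1/2$ split (with finiteness near the origin from $\sup_z\mu(z)|\phi'(z)|<\infty$, obtained by testing boundedness on $f(z)=z^2/2$) all check out and mirror the end of the paper's proof of Theorem \ref{th4}.

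So the discrepancy you flag is genuine, and it sits in the statement rather than in your argument: as printed, the operator label and condition (c) do not match. Either the operator should read $C_\phi$ (which is what $g=\phi'$ produces, and then the paper's substitution needs no new computation), or, for the literal $C_\phi D$ into $\mathcal B_\mu$, the denominator must be $|1-\bar\zeta\phi(z)|^{\alpha+4}$ as you derive; the two versions are consistent with each other, since the $C_\phi D$ condition for the pair $\{\nu,\omega\}$ is exactly the $C_\phi$ condition for the shifted normal weight $(1-|z|^2)\nu$ and parameter $\alpha+1$, reflecting that $D:\mathcal H^\infty_\nu\to\mathcal H^\infty_{(1-|z|^2)\nu}$ is an isomorphism modulo constants. (The same one-derivative shift infects the neighbouring corollaries: the earlier ``$C_\phi$'' corollary with exponent $\alpha+2$ really concerns $T^\phi_\phi f=\int_0^z f(\phi(\zeta))\phi'(\zeta)\,d\zeta$.) The one step of your outline that deserves more care is $(a)\Leftrightarrow(b)$: the bitranspose identity is proved in Theorems \ref{lm1} and \ref{lm2} only for $T^\phi_g$ and $S^\phi_g$, so for the second-order operator $M_{\phi'}C_\phi D^2$ you should either rerun that argument directly or route it through $S^\phi_1$, noting $C_\phi Df=(S^\phi_1f)'$; this is routine but not literally covered by the lemmas you cite.
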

\begin{cor}\label{5cor}   Let $\nu, \mu \in N_W(\mathbb D),$  $\alpha > -1$ and $\omega$ a weight function such that  $\{\nu, \omega\}$ is a normal pair and {\rm(}\ref{1b1}{\rm)} holds. If     $\phi$ an analytic self-map of $\mathbb{D}$ such that $ \phi \in \mathcal H^{0}_{\mu(z)} $ and  $C_{\phi} D : \mathcal H^{\infty}_{\nu} \longrightarrow  \mathcal B_{\mu}$ is bounded, then the following statements are equivalent.
\begin{enumerate}
\item [{(a)}] $C_{\phi}D : \mathcal H^{0}_{\nu} \longrightarrow  \mathcal B_{\mu}$ is nuclear.
\item [{(b)}] $C_{\phi}D : \mathcal H^{\infty}_{\nu} \longrightarrow  \mathcal B_{\mu}$ is nuclear.
\item [{(c)}] $C_{\phi}D : \mathcal H^{0}_{\nu} \longrightarrow  \mathcal B^{0}_{\mu}$ is nuclear.
\item [{(d)}] $C_{\phi}D : \mathcal H^{\infty}_{\nu} \longrightarrow  \mathcal B^{0}_{\mu}$ is nuclear.
\item [{(e)}]   $\phi $ and $\nu$ satisfy the following condition:$$ \int_{\mathbb{D}}\bigg [\sup_{z\in \mathbb{D}}\frac{\mu(z)|\phi'(z)|}{|1-\bar{\zeta}\phi(z)|^{\alpha + 3}}\bigg ]\frac{(1-|\zeta|^{2})^\alpha}{\nu(\zeta)}dA(\zeta) < \infty.$$
\end{enumerate}
\end{cor}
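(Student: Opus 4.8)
The plan is to obtain this statement as the specialization $g=\phi'$ of Theorem \ref{th4} and its $\mathcal B_\mu$-valued companion for $S^\phi_g$, so that the essential work is the identification of $C_\phi D$ with a Volterra composition operator. First I would record the identity that drives everything: for $f\in H(\mathbb D)$,
\[
(S^\phi_{\phi'}f)'(z)=f'(\phi(z))\,\phi'(z)=\phi'(z)\,(C_\phi D f)(z),
\]
and, integrating along $\phi$, $S^\phi_{\phi'}f=C_\phi f-f(\phi(0))$, so that $S^\phi_{\phi'}$ differs from $C_\phi$ by the rank-one (hence nuclear) map $f\mapsto f(\phi(0))$. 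Since $\mathcal H^\infty_\mu=\mathcal B_{(1-|z|^2)\mu}$, the $\mathcal B_\mu$-norm of the image is governed by $\sup_{z}\mu(z)\,|\phi'(z)|\,|f'(\phi(z))|$. This is precisely the quantity that, after inserting the order-$(\alpha+3)$ reproducing identity for $f'$ derived in the proof of Theorem \ref{th4}, yields the integrand of condition (e); in other words, (e) is exactly the condition $N_\alpha<\infty$ of Theorem \ref{th4} with $g$ replaced by $\phi'$ and with the target renormalised from $\mathcal H^\infty_\mu$ to $\mathcal B_\mu$, which accounts for the absence of the factor $(1-|z|^2)$ in (e).

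With this identification the rest is bookkeeping. Boundedness of $C_\phi D:\mathcal H^\infty_\nu\to\mathcal B_\mu$ is the hypothesis of Theorem \ref{th1}(ii) with $g=\phi'$, and the standing assumption $\phi\in\mathcal H^0_\mu$ is, via the normal-weight identity $\mathcal H^0_\mu=\mathcal B^0_{(1-|z|^2)\mu}$, exactly the membership $\phi'\in\mathcal H^0_{(1-|z|^2)\mu}$ that is required in Theorem \ref{lm2}(ii) to run the Gantmacher--Nakamura/bitranspose argument; this gives $(S^\phi_{\phi'})''=S^\phi_{\phi'}$ and hence the boundedness of the $\mathcal B^0_\mu$-valued restrictions needed in (c) and (d). The implications (a)$\Leftrightarrow$(b) and (a)$\Leftrightarrow$(c)$\Leftrightarrow$(d) then transcribe verbatim from the proof of Theorem \ref{th4}: nuclearity coincides with absolute summingness because $\mathcal H^0_\nu\simeq c_0$, the transpose of a nuclear operator is nuclear, and the nuclear operators form an operator ideal not fixing a copy of $\ell^\infty$.

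The core is (a)$\Leftrightarrow$(e), which I would prove by copying the two halves of Theorem \ref{th4} with $g=\phi'$. For $(e)\Rightarrow(a)$ I would bound $\sum_{i=1}^N\|C_\phi D f_i\|_{\mathcal B_\mu}$ by selecting near-extremal points $z_i$, expressing $|f_i'(\phi(z_i))|$ through the order-$(\alpha+3)$ reproducing integral, and factoring the estimate as $\big(\sup_{w}\sum_i|f_i(w)|\nu(w)\big)$ times the integral in (e); rewriting the supremum as $\sup_{|\eta_i|=1}\|\sum_i\eta_i f_i\|_{\mathcal H^\infty_\mu}$ then delivers absolute summingness and hence nuclearity. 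For $(a)\Rightarrow(e)$ I would apply Pietsch's Theorem \ref{th2} to dominate $\|C_\phi D f\|_{\mathcal B_\mu}$ by an integral of $|\xi(f)|$ against a probability measure on the unit ball of $A^1_\omega$, test on the kernels $K_\zeta\in\mathcal H^0_\mu$, and integrate in $\zeta$ against $\omega\,dA$ using Fubini and $\|\xi\|_{A^1_\omega}\le1$. The one genuine obstacle, and the step I would verify most carefully, is the reduction of the first paragraph: one must confirm that the $\mathcal B_\mu$-mapping problem for $C_\phi D$ collapses to the order-$(\alpha+3)$ symbol $\phi'$ (i.e.\ to $C_\phi$, equivalently $S^\phi_{\phi'}$) rather than to a spurious order-$(\alpha+4)$, second-derivative condition; once the operator is correctly matched to $S^\phi_{\phi'}$, every remaining estimate is routine and identical to those in Theorem \ref{th4}.
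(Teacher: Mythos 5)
Your overall route --- specialize Theorem \ref{th4} and its $\mathcal B_\mu$-valued corollary to $g=\phi'$ --- is exactly the derivation the paper intends (the paper gives no proof beyond ``we can also obtain the following corollaries''), and your transcription of the $c_0$/absolutely-summing equivalence, the Pietsch testing argument, and the bitranspose bookkeeping is faithful. The genuine gap is precisely the step you postponed to your last sentence, and it cannot be closed: $C_\phi D$ does not match $S^\phi_{\phi'}$. Your own two identities show this. From $S^\phi_{\phi'}f=C_\phi f-f(\phi(0))$, the specialization $g=\phi'$ of Theorem \ref{th4} characterizes nuclearity of $C_\phi$ (a rank-one, hence nuclear, perturbation of $S^\phi_{\phi'}$), not of $C_\phi D$. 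From $(S^\phi_{\phi'}f)'=\phi'\cdot(C_\phi Df)$ one gets $S^\phi_{\phi'}=J\,M_{\phi'}\,(C_\phi D)$, where $Jh(z)=\int_0^z h(\zeta)\,d\zeta$; since $J$ is an isometry of $\mathcal H^\infty_\mu$ onto the functions in $\mathcal B_\mu$ vanishing at $0$, nuclearity of $S^\phi_{\phi'}:\mathcal H^\infty_\nu\to\mathcal B_\mu$ is equivalent to nuclearity of $M_{\phi'}C_\phi D:\mathcal H^\infty_\nu\to\mathcal H^\infty_\mu$, and the multiplier $M_{\phi'}$ cannot be stripped off: $\phi'$ is not bounded below (indeed need not even be a bounded multiplier), so no equivalence between $M_{\phi'}C_\phi D$ and $C_\phi D$ follows.

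For the literal operator in the statement the reduction also fails quantitatively. Since $(C_\phi Df)'(z)=f''(\phi(z))\phi'(z)$, one has $\|C_\phi Df\|_{\mathcal B_\mu}\asymp|f'(\phi(0))|+\sup_{z}\mu(z)|\phi'(z)||f''(\phi(z))|$, a second-derivative quantity. Testing on the kernels $K_\zeta$ in the Pietsch direction produces $K_\zeta''$, hence the power $|1-\bar{\zeta}\phi(z)|^{\alpha+4}$ in the denominator; likewise, in your direction $(e)\Rightarrow(a)$ the reproducing formula you must insert is the one for $f''$, which again carries the power $\alpha+4$. Because $|1-\bar{\zeta}\phi(z)|\le 2$, the $(\alpha+4)$-integral dominates the $(\alpha+3)$-integral, so the condition genuinely attached to $C_\phi D$ is strictly stronger than condition (e); consequently (e) is too weak for your summing estimate to go through for $C_\phi D$, and the implication $(e)\Rightarrow(a)$ cannot be obtained by this reduction. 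What your argument actually proves is the corollary with $C_\phi$ (equivalently $S^\phi_{\phi'}$) in place of $C_\phi D$; if one insists on $C_\phi D$ itself, the correct procedure is to factor $C_\phi D=C_\phi\circ D$ with $D:\mathcal H^\infty_\nu\to\mathcal H^\infty_{(1-|z|^2)\nu}$ an isomorphism modulo constants and apply the $C_\phi$ result over this larger source space, which yields the $(\alpha+4)$-condition rather than (e). This mismatch originates in the paper's own unproved statement, but as written your proposal does not, and cannot, bridge it.
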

\begin{cor}\label{2cor}  Let $\nu, \mu \in N_W(\mathbb D),$  $\alpha > -1$ and $\omega$ a weight function such that  $\{\nu, \omega\}$ is a normal pair and {\rm(}\ref{1b1}{\rm)} holds. If   Let $g\in {H}(\mathbb D)$ be  such that  $S_{g} : \mathcal H^{\infty}_{\nu} \longrightarrow  B_{\mu}$ is bounded, then the following statements are equivalent.
\begin{enumerate}
\item [{(a)}] $S_{g} : \mathcal H^{0}_{\nu} \longrightarrow  \mathcal B_{\mu}$ is nuclear.
\item [{(b)}] $S_{g} : \mathcal H^{\infty}_{\nu} \longrightarrow  \mathcal B_{\mu}$ is nuclear.
\item [{(c)}] For  $g$  and $\nu$ satisfy the following condition:$$ \int_{\mathbb{D}}\bigg [\sup_{z\in \mathbb{D}}\frac{ \mu(z)|g'(z)|}{|1-\bar{\zeta}z|^{\alpha + 3}}\bigg ]\frac{(1-|\zeta|^{2})^\alpha}{\nu(\zeta)}dA(\zeta) < \infty.$$
\end{enumerate}
\end{cor}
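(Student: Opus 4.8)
The plan is to recognise $S_g$ as the special case $\phi(z)=z$ of the Volterra composition operator $S^\phi_g$ and to read off the statement from Theorem \ref{th4} (in its $\mathcal B_\mu$-target form) by specialisation; equivalently, one reruns that argument verbatim. First I would record that $(S_g f)(0)=0$ and $(S_g f)'=g\cdot f'$, so that, since the target here is $\mathcal B_\mu$ whose seminorm carries the weight $\mu$ rather than the weight $(1-|z|^2)\mu$ attached to $\mathcal H^\infty_\mu=\mathcal B_{(1-|z|^2)\mu(z)}$, one has
$$\|S_g f\|_{\mathcal B_\mu}=\sup_{z\in\mathbb D}\mu(z)\,|g(z)|\,|f'(z)|.$$
This is precisely the quantity controlling nuclearity of $S^\phi_g$ into $\mathcal B_\mu$ when $\phi=\mathrm{id}$, and putting $\phi(z)=z$ in the kernel turns $|1-\bar\zeta\phi(z)|$ into $|1-\bar\zeta z|$; thus condition (c) is exactly the $\phi=\mathrm{id}$ form of the hypothesis of Theorem \ref{th4}, with the factor $(1-|z|^2)$ dropped to match the $\mathcal B_\mu$ seminorm. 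The boundedness of $S_g:\mathcal H^\infty_\nu\to\mathcal B_\mu$ is assumed, so it remains only to establish the three equivalences.

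For a self-contained proof of (c) $\Rightarrow$ (a) I would use $\mathcal H^0_\nu\simeq c_0$ so that, by the characterisation recalled in the Introduction (absolutely summing $=$ nuclear when the domain is $c_0$), it suffices to show $S_g:\mathcal H^0_\nu\to\mathcal B_\mu$ is absolutely summing. Following the $(c)\Rightarrow(a)$ step of Theorem \ref{th4}, for polynomials $f_1,\dots,f_N$ I would fix $C>1$ and choose near-extremal points $z_i$ realising the supremum defining $\|S_g f_i\|_{\mathcal B_\mu}$, represent each $f_i'(z_i)$ through the differentiated reproducing kernel via the pairing (\ref{1a1}),
$$f_i'(w)=(\alpha+1)(\alpha+2)\int_{\mathbb D}f_i(\zeta)\frac{\bar\zeta(1-|\zeta|^2)^\alpha}{(1-\bar\zeta w)^{\alpha+3}}\,dA(\zeta),$$
and then bound $\sum_i\|S_g f_i\|_{\mathcal B_\mu}$ by $C\,\big(\sup_{w\in\mathbb D}\sum_i|f_i(w)|\nu(w)\big)$ times the integral in (c). Rewriting the supremum as $\sup_{|\eta_i|=1}\|\sum_i\eta_i f_i\|_{\mathcal H^\infty_\nu}$ exactly as in Theorem \ref{th4} yields the absolutely summing inequality, and letting $C\to 1$ finishes this implication.

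For (a) $\Rightarrow$ (c) I would invoke Pietsch's Theorem \ref{th2} to obtain a probability measure $\varrho$ on the $\sigma(A^1_\omega,\mathcal H^0_\mu)$-compact ball of $A^1_\omega$ with $\|S_g f\|_{\mathcal B_\mu}\le C\int|\xi(f)|\,d\varrho(\xi)$, test this on the kernels $K_\zeta\in\mathcal H^0_\mu$, use that differentiating $K_\zeta$ produces the exponent $\alpha+3$ together with the factor $|\zeta|$, integrate in $\zeta$ against $\omega\,dA$, and apply Fubini with the reproducing identity $\xi(K_\zeta)=\xi(\zeta)$ and $\|\xi\|_{A^1_\omega}\le1$. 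The only genuine technical point, as in Theorem \ref{th4}, is the spurious factor $|\zeta|$ coming from the differentiated kernel: I would absorb it by splitting $\int_{\mathbb D}=\int_{|\zeta|\le1/2}+\int_{|\zeta|>1/2}$, bounding the inner piece by boundedness of $S_g$ and continuity of $\nu$ at $1/2$, and the outer piece by the Pietsch bound after replacing $|\zeta|$ by $1$. Finally (a) $\Leftrightarrow$ (b) follows as in Theorem \ref{th3}: (a) $\Rightarrow$ (b) because the transpose of a nuclear operator is nuclear, and (b) $\Rightarrow$ (a) because the nuclear operators form an operator ideal that does not fix a copy of $\ell^\infty$ and $\mathcal H^0_\nu$ is a closed subspace of $\mathcal H^\infty_\nu$.
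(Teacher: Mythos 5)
Your proposal is correct and follows essentially the route the paper itself intends: the paper offers no separate proof of this corollary (it appears after ``We can also obtain the following corollaries''), the implicit argument being exactly your specialisation $\phi(z)=z$ of Theorem \ref{th4} and of its $\mathcal B_\mu$-target variant --- same differentiated-kernel representation through the pairing (\ref{1a1}), same Pietsch/absolutely-summing argument using $\mathcal H^0_\nu\simeq c_0$, same splitting of the integral over $|\zeta|\le 1/2$ and $|\zeta|>1/2$ to remove the spurious factor $|\zeta|$, and the same transpose/operator-ideal argument for (a) $\Leftrightarrow$ (b).

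One point you pass over silently, and it is worth making explicit: what your argument actually establishes is condition (c) with $\mu(z)|g(z)|$ in the numerator, since $(S_gf)'=g\,f'$, whereas the corollary as printed has $\mu(z)|g'(z)|$. Your claim that the printed (c) ``is exactly the $\phi=\mathrm{id}$ form of the hypothesis of Theorem \ref{th4}'' is therefore not literally true: the printed condition appears to be a misprint carried over from the corresponding $T_g$ corollaries, where $(T_gf)'=f\,g'$ makes $|g'|$ the correct quantity (note the $S_g$ corollaries keep the exponent $\alpha+3$, which is right for $S_g$, but inherit the wrong $|g'|$). So your proof is the correct proof of the corrected statement; a careful write-up should flag that (c) must read $|g(z)|$ for the equivalence to hold, rather than assert agreement with the statement as printed.
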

\begin{cor}\label{3cor}   Let $\nu, \mu \in N_W(\mathbb D),$  $\alpha > -1$ and $\omega$ a weight function such that  $\{\nu, \omega\}$ is a normal pair and {\rm(}\ref{1b1}{\rm)} holds. If    $ g \in \mathcal B^{0}_{\mu(z)} $ be such that  $S_{g} : \mathcal H^{\infty}_{\nu} \longrightarrow  \mathcal B_{\mu}$ is bounded, then the following statements are equivalent.
\begin{enumerate}
\item [{(a)}] $S_{g} : \mathcal H^{0}_{\nu} \longrightarrow  \mathcal B_{\mu}$ is nuclear.
\item [{(b)}] $S_{g} : \mathcal H^{\infty}_{\nu} \longrightarrow  \mathcal B_{\mu}$ is nuclear.
\item [{(c)}] $S_{g} : \mathcal H^{0}_{\nu} \longrightarrow  \mathcal B^{0}_{\mu}$ is nuclear.
\item [{(d)}] $S_{g} : \mathcal H^{\infty}_{\nu} \longrightarrow  \mathcal B^{0}_{\mu}$ is nuclear.
\item [{(e)}]   $g$  and $\nu$ satisfy the following condition:$$ \int_{\mathbb{D}}\bigg [\sup_{z\in \mathbb{D}}\frac{\mu(z)|g'(z)|}{|1-\bar{\zeta}z|^{\alpha + 3}}\bigg ]\frac{(1-|\zeta|^{2})^\alpha}{\nu(\zeta)}dA(\zeta) < \infty.$$
\end{enumerate}
\end{cor}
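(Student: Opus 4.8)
The plan is to obtain this corollary as the specialization $\phi(z)=z$ of the companion-operator results already established, and to supply the self-contained argument for completeness. Since the identity map is an analytic self-map of $\mathbb D$ and $S_g=S_g^{\phi}$ when $\phi(z)=z$, every hypothesis of the general statement transfers verbatim ($g\in\mathcal B^0_{\mu(z)}$ and $S_g:\mathcal H^\infty_\nu\to\mathcal B_\mu$ bounded), and the factor $|1-\bar\zeta\phi(z)|^{\alpha+3}$ becomes $|1-\bar\zeta z|^{\alpha+3}$, which is exactly the denominator in $(e)$; so the five-fold equivalence for $S_g$ follows at once from the one proved for $S_g^{\phi}$. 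It therefore suffices to indicate the direct route, which runs parallel to the proof of Theorem~\ref{th4} with two modifications: the codomain is $\mathcal B_\mu$ rather than $\mathcal H^\infty_\mu=\mathcal B_{(1-|z|^2)\mu(z)}$, so the weight $(1-|z|^2)\mu(z)$ is replaced by $\mu(z)$, and, because $(S_gf)'=f'g$ with $(S_gf)(0)=0$, the operator is governed by $\|S_gf\|_{\mathcal B_\mu}=\sup_{z}\mu(z)|g(z)||f'(z)|$.

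For $(e)\Rightarrow(a)$ I would use $\mathcal H^0_\nu\simeq c_0$ to reduce nuclearity to absolute summability, so that it suffices to estimate $\sum_{i=1}^N\|S_gf_i\|_{\mathcal B_\mu}$ for polynomials $f_i$ (dense in $\mathcal H^0_\nu$). For $C>1$ choose $z_i$ almost attaining the supremum defining each $\|S_gf_i\|_{\mathcal B_\mu}$, so that $\sum_i\|S_gf_i\|_{\mathcal B_\mu}\le C\sum_i\mu(z_i)|g(z_i)||f_i'(z_i)|$. Differentiating the reproducing formula gives $f_i'(w)=(\alpha+1)(\alpha+2)\int_{\mathbb D}f_i(\zeta)\,\bar\zeta\,(1-|\zeta|^2)^\alpha(1-\bar\zeta w)^{-(\alpha+3)}\,dA(\zeta)$; substituting $w=z_i$, bounding $\mu(z_i)|g(z_i)||1-\bar\zeta z_i|^{-(\alpha+3)}$ by its supremum over $z$, using $|\zeta|\le1$, and interchanging summation with the outer supremum via $\sup_w\sum_i|f_i(w)|\nu(w)=\sup_{|\eta_i|=1}\|\sum_i\eta_if_i\|_{\mathcal H^\infty_\nu}$, I arrive at $\sum_i\|S_gf_i\|_{\mathcal B_\mu}\le C(\alpha+1)(\alpha+2)\,M\,\sup_{|\eta_i|=1}\|\sum_i\eta_if_i\|_{\mathcal H^\infty_\nu}$, where $M$ is the integral in $(e)$. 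Letting $C\downarrow1$ shows $S_g$ is absolutely summing, hence nuclear.

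For $(a)\Rightarrow(e)$, absolute summability and Pietsch's Theorem~\ref{th2} yield a probability measure $\varrho$ on the unit ball $\mathbb B_1$ of $(\mathcal H^0_\nu)'=A^1_\omega$ with $\|S_gf\|_{\mathcal B_\mu}\le C\int_{\mathbb B_1}|\xi(f)|\,d\varrho(\xi)$. Testing on $K_\zeta\in\mathcal H^0_\nu$, using $|K_\zeta'(z)|=(\alpha+1)(\alpha+2)|\zeta|\,|1-\bar\zeta z|^{-(\alpha+3)}$ and the reproducing identity $\xi(K_\zeta)=\xi(\zeta)$, then integrating against $\omega\,dA=(1-|\zeta|^2)^\alpha\nu(\zeta)^{-1}dA$ and applying Fubini with $\int_{\mathbb D}|\xi(\zeta)|\omega(\zeta)\,dA(\zeta)=\|\xi\|_{A^1_\omega}\le1$, I obtain finiteness of the integral in $(e)$ weighted by an extra factor $|\zeta|$. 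That spurious $|\zeta|$ is removed by splitting $\mathbb D$ into $|\zeta|>1/2$, where $|\zeta|^{-1}<2$, and $|\zeta|\le1/2$, where $|1-\bar\zeta z|\ge1/2$ and $\mu(z)|g(z)|\le\|g\|_{\mathcal H^\infty_\mu}$ (finite, since applying the bounded operator $S_g$ to $f(z)=z$ gives $\|g\|_{\mathcal H^\infty_\mu}=\|S_gf\|_{\mathcal B_\mu}$) while $\nu(\zeta)\ge\nu(1/2)$; both pieces are finite, giving $(e)$. Finally, $(a)\Leftrightarrow(b)$ and $(c)\Leftrightarrow(d)$ (change of source $\mathcal H^0_\nu\leftrightarrow\mathcal H^\infty_\nu$) follow from the facts that the transpose of a nuclear operator is nuclear, that $\mathcal H^0_\nu$ is $w^*$-dense in its bidual $\mathcal H^\infty_\nu$, and that the nuclear operators form an ideal not fixing a copy of $\ell^\infty$, together with the bi-transpose identity for $S_g$ of Theorem~\ref{lm2}; the passage between the codomains $\mathcal B_\mu$ and $\mathcal B^0_\mu$ uses $g\in\mathcal B^0_{\mu(z)}$ to place the range of $S_g$ in the closed subspace $\mathcal B^0_\mu$, so that nuclearity into $\mathcal B_\mu$ and into $\mathcal B^0_\mu$ coincide. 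I expect the only genuinely delicate point to be the removal of the factor $|\zeta|$ in $(a)\Rightarrow(e)$, handled by the dyadic split above; everything else is forced by normality of the weights and the duality pairing~(\ref{1a1}).
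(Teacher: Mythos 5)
Your strategy---set $\phi(z)=z$ in the $S_g^{\phi}$ results and re-run the proof of Theorem~\ref{th4} with the codomain weight $(1-|z|^{2})\mu(z)$ replaced by $\mu(z)$---is precisely the derivation the paper intends (the paper gives no separate proof of this corollary, listing it among those that ``can also be obtained''), and the analytic core of your argument is sound: the Pietsch step, the differentiated reproducing formula, the dyadic removal of the factor $|\zeta|$, and the use of $f(z)=z$ to get $\sup_{z}\mu(z)|g(z)|<\infty$ all match the proof of Theorem~\ref{th4}. The gap is that what you prove is not the statement as printed, and nothing in your write-up bridges the two. Since $(S_gf)'=f'g$, every step of your proof produces the condition
\begin{equation*}
\int_{\mathbb{D}}\Big[\sup_{z\in\mathbb{D}}\frac{\mu(z)|g(z)|}{|1-\bar{\zeta}z|^{\alpha+3}}\Big]\frac{(1-|\zeta|^{2})^{\alpha}}{\nu(\zeta)}\,dA(\zeta)<\infty,
\end{equation*}
with $|g(z)|$ in the numerator, whereas item (e) of the corollary has $|g'(z)|$. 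These are genuinely different conditions, so your claim that your quantity $M$ ``is the integral in (e)'' is false as written, and your opening claim that the specialization $\phi(z)=z$ reproduces (e) ``exactly'' checks only the denominator. In truth (e) is a misprint carried over from the $T_g$ corollaries (where $(T_gf)'=fg'$ makes $|g'|$ correct); a complete solution must say this explicitly and prove the corrected statement, otherwise the equivalence asserted in the corollary is simply not established.

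The same $g\leftrightarrow g'$ confusion occurs at your passage between the codomains $\mathcal{B}_{\mu}$ and $\mathcal{B}^{0}_{\mu}$. You assert that $g\in\mathcal{B}^{0}_{\mu}$ ``places the range of $S_g$ in the closed subspace $\mathcal{B}^{0}_{\mu}$'', but $S_gf\in\mathcal{B}^{0}_{\mu}$ requires $\mu(z)|g(z)||f'(z)|\to0$, which is governed by $\mu(z)|g(z)|$ and not by $\mu(z)|g'(z)|$; the hypothesis that actually drives this step (and the one the paper uses for the $S$-operator in Theorem~\ref{lm2}(ii) and Corollary~\ref{1cor}) is $g\in\mathcal{H}^{0}_{\mu}$. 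Your step can be rescued, because for a normal weight $\mu$ one has the embedding $\mathcal{B}^{0}_{\mu}\subset\mathcal{H}^{0}_{\mu}$: integrate $g'$ along radii and use condition $(\mathbf{II})$ to show $\mu(r)\int_{0}^{r}dt/\mu(t)\to0$ as $r\to1^{-}$. But that embedding is neither obvious nor stated in the paper, and it appears nowhere in your proposal; without it, your equivalences (a)$\Leftrightarrow$(c) and (b)$\Leftrightarrow$(d) are unjustified as written.
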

\section{Nuclear $T_{g}^{\phi}$ and $S_{g}^{\phi}$  between   Bloch spaces of order $\beta$}
Finally, we apply our results to $T_{g}^{\phi}$ and $S_{g}^{\phi}$ acting between  weighted Banach spaces and Bloch spaces of order $\beta,$ where $\beta > 0.$ Recall that if $\nu_\beta$ is a classical weight, that is $\nu_\beta (z) = (1 - |z|^2)^\beta$, then the spaces $\mathcal H^{\infty}_{\nu}$, $\mathcal H^{0}_{\nu}$, $\mathcal B_{\nu}$  and $\mathcal B^{0}_{\mu}$ reduces respectively to weighted and Bloch spaces of order $\beta$, denoted by $\mathcal H^{\infty}_{\beta}$, $\mathcal H^{0}_{\beta}$, $\mathcal B_{\beta}$  and $\mathcal B^{0}_{\beta}$. Moreover, for $\beta  > 1$, we have that $\mathcal H^{0}_{\beta - 1} = \mathcal B^{0}_{\beta}$ and $\mathcal H^{\infty}_{\beta - 1} = \mathcal B^{\infty}_{\beta}$ with equivalent norm. Thus we have the following corollary. \begin{cor}\label{1thm}  Let $\beta > 1$, $\gamma  > 0$, $g\in {H}(\mathbb D)$ and $\phi$ an analytic self-map of $\mathbb{D}$ such that  $T_{g}^{\phi}: \mathcal B_{\beta} \longrightarrow  \mathcal B_{\gamma}$ is bounded. Then the following statements are equivalent.
\begin{enumerate}
\item [{(a)}] $T_{g}^{\phi}: \mathcal B^0_{\beta} \longrightarrow  \mathcal B_{\gamma}$ is nuclear.
\item [{(b)}] $T_{g}^{\phi}: \mathcal B_{\beta} \longrightarrow  \mathcal B_{\gamma}$ is nuclear.
\item [{(c)}] $g$, $\beta,$  $\gamma $ and $\phi$ satisfy the following condition: $$P_\alpha = \int_{\mathbb{D}}\bigg [\sup_{z\in \mathbb{D}}\frac{(1-|z|^{2})^{\gamma}|g'(z)|}{|1-\bar{\zeta}\phi(z)|^{\alpha + 2}}\bigg ] {(1-|\zeta|^{2})^{\alpha - \beta + 1}}dA(\zeta) < \infty.$$
\end{enumerate}
\end{cor}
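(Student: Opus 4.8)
The plan is to deduce this corollary directly from the weighted-space characterizations by specializing to classical weights, so that no new analysis is needed. Recall from the discussion preceding the statement that, for $\beta > 1$, the Bloch space of order $\beta$ coincides with a weighted Banach space of order $\beta - 1$: precisely $\mathcal B_\beta = \mathcal H^\infty_{\beta - 1}$ and $\mathcal B^0_\beta = \mathcal H^0_{\beta - 1}$ with equivalent norms. Writing $\nu = \nu_{\beta - 1}(z) = (1 - |z|^2)^{\beta - 1}$ and $\mu = \nu_\gamma(z) = (1 - |z|^2)^\gamma$, the operator $T_g^\phi : \mathcal B_\beta \to \mathcal B_\gamma$ (respectively $\mathcal B^0_\beta \to \mathcal B_\gamma$) becomes, up to these isomorphisms, exactly $T_g^\phi : \mathcal H^\infty_\nu \to \mathcal B_\mu$ (respectively $\mathcal H^0_\nu \to \mathcal B_\mu$). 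Since nuclearity is stable under pre- and post-composition with topological isomorphisms, the whole chain of equivalences for the Bloch-space operator will follow from the corresponding chain for the weighted-space operator.

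First I would verify that the hypotheses of the weighted-space result are met. Since $\beta > 1$ and $\gamma > 0$, both $\nu_{\beta - 1}$ and $\nu_\gamma$ are standard weights, hence belong to $N_W(\mathbb D)$. Taking $\omega(r) = (1 - r^2)^{\alpha - \beta + 1}$ gives $\nu_{\beta-1}(r)\,\omega(r) = (1 - r^2)^\alpha$, so $\{\nu_{\beta-1}, \omega\}$ is a normal pair and $\int_0^1 \omega(r)\,dr < \infty$ precisely when $\alpha > \beta - 2$; this is the implicit restriction on $\alpha$ carried by the statement, and it is exactly what makes the duality machinery of Theorem \ref{th3} and the characterization of nuclear $T_g^\phi : \mathcal H^\infty_\nu \to \mathcal B_\mu$ (Corollary \ref{2cor}) applicable. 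With these choices, the boundedness hypothesis on $T_g^\phi : \mathcal B_\beta \to \mathcal B_\gamma$ transfers, via the isomorphisms above, to boundedness of $T_g^\phi : \mathcal H^\infty_{\nu_{\beta-1}} \to \mathcal B_{\nu_\gamma}$, as required to invoke that corollary.

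Then I would apply Corollary \ref{2cor} with $\nu = \nu_{\beta-1}$ and $\mu = \nu_\gamma$, which yields the equivalence of nuclearity of $T_g^\phi$ on $\mathcal H^0_\nu \to \mathcal B_\mu$, on $\mathcal H^\infty_\nu \to \mathcal B_\mu$, and its integral condition; transporting back through $\mathcal B^0_\beta = \mathcal H^0_{\beta-1}$ and $\mathcal B_\beta = \mathcal H^\infty_{\beta-1}$ gives the equivalence of the present items (a) and (b). The last step is a direct computation: substituting $\mu(z) = (1 - |z|^2)^\gamma$ and $\nu(\zeta) = (1 - |\zeta|^2)^{\beta - 1}$ into the integral condition of Corollary \ref{2cor} replaces the factor $(1 - |\zeta|^2)^\alpha / \nu(\zeta)$ by $(1 - |\zeta|^2)^{\alpha - \beta + 1}$ and $\mu(z)$ by $(1 - |z|^2)^\gamma$, so that the condition becomes precisely $P_\alpha < \infty$.

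I expect the only genuine point requiring care to be the bookkeeping around $\alpha$. One must confirm that the admissible range $\alpha > \beta - 2$ (rather than merely $\alpha > -1$) is what forces $\{\nu_{\beta-1}, \omega\}$ to be a normal pair, and, for well-posedness of the statement, that finiteness of $P_\alpha$ does not depend on the particular admissible $\alpha$ chosen. The latter is automatic once the reduction is in place: nuclearity of the operator is an intrinsic property independent of $\alpha$, and it is equivalent to $P_\alpha < \infty$ for every admissible $\alpha$, so the integral condition holds for one such $\alpha$ if and only if it holds for all. Everything else in the argument is a routine transcription of the weighted-space results and I do not anticipate any analytic difficulty beyond this.
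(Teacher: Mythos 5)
Your proposal is correct and follows exactly the paper's intended route: the paper derives this corollary by the same identification $\mathcal B_\beta = \mathcal H^\infty_{\beta-1}$, $\mathcal B^0_\beta = \mathcal H^0_{\beta-1}$ and specialization of the weighted-space nuclearity result for $T_g^\phi : \mathcal H^\infty_\nu \to \mathcal B_\mu$ to the classical weights $\nu_{\beta-1}$, $\nu_\gamma$. Your added bookkeeping on the admissible range $\alpha > \beta - 2$ for the normal pair $\{\nu_{\beta-1}, \omega\}$ is a point the paper leaves implicit, but it does not change the argument.
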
 The case $\beta  = 1$ is settled in the next theorem.
 \begin{thm}\label{1t}  Let $g\in {H}(\mathbb D)$ and   $\phi$ an analytic self-map of $\mathbb{D}$ such that  $T_{g}^{\phi}: \mathcal B  \longrightarrow  \mathcal B $ is bounded.   Then the following statements are equivalent.
\begin{enumerate}
\item [{(a)}] $T_{g}^{\phi}: \mathcal B_{0}\longrightarrow  \mathcal B$ is nuclear.
\item [{(b)}] $g$ and $\phi $ satisfy the following condition:$$M = \int_{\mathbb{D}}\sup_{z\in \mathbb{D}}\frac{(1-|z|^{2})|\phi'(z)|}{|1-\bar{w}\phi(z)|^{3}}|g(z)|dA(w) < \infty.$$
\end{enumerate}
\end{thm}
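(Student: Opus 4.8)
The plan is to run the same Pietsch-type scheme as in Theorems \ref{th3} and \ref{th4}, but specialised to the borderline index $\beta=1$, where the Bloch space $\mathcal B$ can no longer be identified with a weighted space $\mathcal H^\infty_\nu$ of a \emph{typical} weight (the associated weight degenerates to the constant $1$), so that Corollary \ref{1thm} does not apply. Since $\mathcal B_0$ is topologically isomorphic to $c_0$, a bounded operator out of $\mathcal B_0$ is nuclear if and only if it is absolutely summing. Thus the entire theorem reduces to showing that (b) is equivalent to $T_g^\phi\colon\mathcal B_0\to\mathcal B$ being absolutely summing, and Pietsch's Theorem \ref{th2} will be the engine in both directions. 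A preliminary observation that orients the whole computation is that in the normal pair (\ref{1b1}) the source parameters collapse at $\beta=1$, so the integrating measure $\frac{(1-|\zeta|^2)^\alpha}{\nu(\zeta)}\,dA(\zeta)$ that appears in Theorems \ref{th3}--\ref{th4} reduces exactly to Lebesgue measure $dA$; this explains the absence of any weight factor in the quantity $M$.

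For the implication (b)$\Rightarrow$(a) I would take a finite family $f_1,\dots,f_N\in\mathcal B_0$ (polynomials suffice by density), and for a fixed $C>1$ choose for each $i$ a point $z_i\in\mathbb D$ nearly realising the supremum defining $\|T_g^\phi f_i\|_{\mathcal B}$, exactly as in (\ref{nvcx1}). Using $\mathcal B=\mathcal B_{(1-|z|^2)}$ and that $T_g^\phi f_i$ vanishes at the origin, this bounds $\sum_i\|T_g^\phi f_i\|_{\mathcal B}$ by $C$ times a sum of the single terms that make up the integrand of $M$. I would then represent the relevant composed evaluation of $f_i$ by the Bergman reproducing formula attached to the derivative model $\mathcal B_0\simeq\mathcal H^0_{\nu_1}$, differentiating the kernel the appropriate number of times so that the exponent $3$ and the factor $\overline\zeta$ appear, precisely as the step leading to (\ref{mcv1}). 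Interchanging the supremum over $z$ with the integral and the finite sum, and collapsing the remaining sum by the Pietsch identity $\sup_{w}\sum_i|f_i(w)|\,(1-|w|^2)=\sup_{|\eta_i|=1}\|\sum_i\eta_i f_i\|$, yields $\sum_i\|T_g^\phi f_i\|_{\mathcal B}\le C\,M\,\sup_{|\eta_i|=1}\|\sum_i\eta_i f_i\|$, which is the defining inequality for absolute summability since $C>1$ is arbitrary.

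For the converse (a)$\Rightarrow$(b) I would feed the kernels into Pietsch's theorem. Absolute summability provides a probability measure $\varrho$ on the $\sigma(A^1_\omega,\mathcal B_0)$-compact unit ball $\mathbb{B}_1$ of the predual $A^1_\omega$ and a constant $C$ with $\|T_g^\phi f\|_{\mathcal B}\le C\int_{\mathbb{B}_1}|\xi(f)|\,d\varrho(\xi)$. Testing this on the kernel functions $K_\zeta$, which lie in $\mathcal B_0$ for the relevant parameter and satisfy $\xi(K_\zeta)=\xi(\zeta)$, and estimating $\|T_g^\phi K_\zeta\|_{\mathcal B}$ from below by the single term appearing in $M$ as in (\ref{e2l}), I would integrate the resulting pointwise inequality over $\mathbb D$ against $\omega\,dA$ (here just $dA$), apply Fubini, and use that $\varrho$ is a probability measure together with $\sup_{\xi\in\mathbb{B}_1}\|\xi\|_{A^1_\omega}=1$ to collapse the right-hand side to a constant. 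Exactly as in Theorem \ref{th4}, the boundedness hypothesis then lets me split $\mathbb D=\{|\zeta|\le 1/2\}\cup\{|\zeta|>1/2\}$: the inner region contributes a harmless finite term controlled by $\|g\|$, while the outer region is governed by the Pietsch estimate, and the two together give $M<\infty$.

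The main obstacle is the degeneracy at $\beta=1$. Because the source weight is constant, $\mathcal B$ is not a weighted $H^\infty$ space of a typical weight, so the reduction used for $\beta>1$ is unavailable and the argument must be carried out intrinsically on the little Bloch space through the isomorphism $f\mapsto f'$ onto $\mathcal H^0_{\nu_1}$. The corresponding normal pair sits at the boundary of the integrability condition in (\ref{1b1}), so one must verify in this limiting regime that the Bergman kernels (and the $z$-derivatives of them that produce the exponent $3$) still belong to $\mathcal B_0$ with uniformly bounded norm, and that the pairing between $A^1_\omega$ and $\mathcal B_0$ continues to reproduce the values needed in both directions. Pinning down the exact power of $1-\overline{w}\phi(z)$ and the accompanying factor that match the integrand of $M$ is the delicate point; once these kernel estimates are established, both halves of the Pietsch argument proceed essentially verbatim as in Theorems \ref{th3} and \ref{th4}.
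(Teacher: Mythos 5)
Your proposal follows essentially the same route as the paper's proof: both reduce nuclearity to absolute summability via $\mathcal B_{0}\simeq c_{0}$, prove (b)$\Rightarrow$(a) by testing finitely many polynomials against the exponent-$3$ derivative reproducing formula $f'(z)=2\int_{\mathbb D}\frac{(1-|w|^{2})f'(w)}{(1-z\bar w)^{3}}\,dA(w)$ (the paper cites Corollary 1.5 of \cite{HKZ} rather than differentiating a normal-pair kernel, but this is the same identity) together with the Pietsch supremum identity, and prove (a)$\Rightarrow$(b) by applying Pietsch's theorem to the kernels $f_{w}(z)=1/(1-\bar w z)^{2}\in\mathcal B_{0}\cap H^{\infty}$, integrating over $\mathbb D$, applying Fubini, and splitting the disk at $|w|=1/2$ exactly as in Theorem \ref{th4}. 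The one delicate point you flag---matching the power of $1-\bar w\phi(z)$ and the accompanying factors to the integrand of $M$---is handled in the paper in precisely the way you outline, so no genuinely different machinery is involved.
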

\begin{proof}
$(b)\Rightarrow (a)$ Since $c_{0}\simeq \mathcal B_{0}$ and polynomials are dense in $ \mathcal B_{0}$. Moreover, by  Corollary $1.5$ in \cite{HKZ}, we have that $f  \in \mathcal  B,$   then we have that
\begin{equation}\label{1ee}
  f'(z)=2\int_{\mathbb{D}}\frac{1-|w|^2}{(1-z\bar{w})^3}f'(w)dA(w).
\end{equation}
Consider  polynomials $p_{1},p_{2},\cdots,p_{N}$ in the definition of absolutely summing and then using (\ref{1ee}), we have
$$ \sum_{i=1}^{N}\|T_{g}^{\phi}p_i\|_{ \mathcal B}  \le  C \bigg(\sup_{w \in\mathbb{D}}\displaystyle\sum_{i=1}^{N}\big|p'_{i}(w)\big|(1-|w|^{2}) \bigg )\int_{\mathbb{D}}\sup_{z\in\mathbb{D}}\frac{(1-|z|^{2})\big|\phi'(z)\big|}{\big|1-\bar{w}\phi(z)\big|^{3}}\big|g(z)\big|dA(w).
$$ The proof can be settled  proceeding as in the proof of Theorem \ref{th3}.\\
$(a)\Rightarrow(b)$  By Theorem 1, %Pietsch's theorem, see 2.12 of \cite{DJT},
 there exists a probability  Borel measure  $\varrho$ on $\sigma ((\mathcal B_{0})^{*}, \mathcal B)$-compact unit ball $B'$ of $(\mathcal B_{0})^{*}=A^{1}$ such that
\begin{equation*} \|T_{g}^{\phi}f\|_{\mathcal B}\le C\int_{B^{'}}|\xi(f)|d\varrho(\xi)\end{equation*}
for every $f\in \mathcal B_{0}$ and some constant $C>0$ independent of $f$.
Next for every $w\in \mathbb{D}$, we consider $f_{w}(z)= {1}/{(1-\bar{w}z)^{2}}$ which lies in $ \mathcal B_{0}\cap H^{\infty}$. By duality and reproducing property of $f_{w}$ in the Bergman space $( \mathcal B_{0})^{*}=A^{1}$, we have
\begin{equation*} \xi(f_{w})=\langle f_{w},h\rangle=\int_{\mathbb{D}}h(z)\overline{f_{w}(z)}dA(z)=\int_{\mathbb{D}}\dfrac{h(z)}{(1-\bar{z}w)^{2}}dA(z)=h(w).\end{equation*}
Also,\begin{equation*}       \sup_{z\in\mathbb{D}}\dfrac{2|w|(1-|z|^{2})|\phi'(z)||g(z)|}{|1-\bar{w}\phi(z)|^{3}}=\|T_{g}^{\phi}f_w\|_{1}\leq\|T_{g}^{\phi}f_w\|_{\mathcal{B}}
     \end{equation*} Now proceeding as in the proof of Theorem \ref{th4},
we can easily see that $(b)$ holds. This completes the proof.\end{proof}
Similarly, we have the following corollary.
\begin{cor}\label{1thm}    Let $\beta > 1$, $\gamma  > 0$, $g\in {H}(\mathbb D)$ and   $\phi$ an analytic self-map of $\mathbb{D}$ such that    $S_{g}^{\phi}: \mathcal B_{\beta} \longrightarrow  \mathcal B_{\gamma}$ is bounded. Then the following statements are equivalent.
\begin{enumerate}
\item [{(a)}] $S_{g}^{\phi}: \mathcal B^0_{\beta} \longrightarrow  \mathcal B_{\gamma}$ is nuclear.
\item [{(b)}] $S_{g}^{\phi}: \mathcal B_{\beta} \longrightarrow  \mathcal B_{\gamma}$ is nuclear.
\item [{(c)}] For  $g$, $\phi $ and $\nu$ satisfy the following condition:$$Q_\alpha = \int_{\mathbb{D}}\bigg [\sup_{z\in \mathbb{D}}\frac{(1-|z|^{2})^{\gamma}|g(z)|}{|1-\bar{\zeta}\phi(z)|^{\alpha + 3}}\bigg ] {(1-|\zeta|^{2})^{\alpha - \beta + 1}}dA(\zeta) < \infty.$$
\end{enumerate}
\end{cor}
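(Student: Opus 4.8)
The plan is to obtain this corollary as the specialisation of the weighted-space result, Theorem \ref{th4}, to standard weights, exactly as the corresponding statement for $T^\phi_g$ and Theorem \ref{1t} specialise Theorem \ref{th3}. The only structural input is the identification, valid because $\beta>1$, of the Bloch spaces of order $\beta$ with genuinely normal weighted spaces: $\mathcal{B}_\beta=\mathcal{H}^\infty_{\beta-1}$ and $\mathcal{B}^0_\beta=\mathcal{H}^0_{\beta-1}$ with equivalent norms, where $\nu(z)=(1-|z|^2)^{\beta-1}$ is a standard, hence normal, weight. Fixing a function $\omega$ so that $\{\nu,\omega\}$ is a normal pair, one then has the Bergman duality $(\mathcal{H}^0_{\beta-1})'=A^1_\omega$ together with the reproducing kernels $K_\zeta$ available on the domain.

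First I would record that the target norm can be written out directly, so there is no need to force $\mathcal{B}_\gamma$ into a weighted space. Since $(S^\phi_g f)(0)=0$ and $(S^\phi_g f)'(z)=f'(\phi(z))g(z)$, we have
$$\|S^\phi_g f\|_{\mathcal{B}_\gamma}=\sup_{z\in\mathbb{D}}(1-|z|^2)^\gamma\,|f'(\phi(z))|\,|g(z)|,$$
which is exactly what produces the numerator $(1-|z|^2)^\gamma|g(z)|$ of $Q_\alpha$; the exponent $\alpha+3$ will then appear when $f'(\phi(z))$ is represented through the \emph{differentiated} reproducing formula, precisely as in the passage from \eqref{nvc1} to \eqref{nvc2} in the proof of Theorem \ref{th4}.

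For the implication $(c)\Rightarrow(a)$, I would use that $\mathcal{H}^0_{\beta-1}\simeq c_0$, so that on this domain nuclearity is equivalent to being absolutely summing, and then run the finite-sequence estimate of the $(c)\Rightarrow(a)$ half of Theorem \ref{th4}: choosing near-extremal points $z_i$, inserting the reproducing representation of $f_i'(\phi(z_i))$, and factoring out $\sup_w\sum_i|f_i(w)|\nu(w)=\sup_{|\eta_i|=1}\|\sum_i\eta_i f_i\|_{\mathcal{H}^\infty_\nu}$, which bounds the sum by $C\,Q_\alpha\,\sup_{|\eta_i|=1}\|\sum_i\eta_i f_i\|$. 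Conversely, for $(a)\Rightarrow(c)$ I would apply Pietsch's Theorem \ref{th2} to the absolutely summing operator $S^\phi_g\colon\mathcal{H}^0_{\beta-1}\to\mathcal{B}_\gamma$, test the resulting Pietsch inequality on the kernels $K_\zeta\in\mathcal{H}^0_{\beta-1}$, integrate against $\omega\,dA$, and use Fubini together with the reproducing property to arrive at $Q_\alpha<\infty$; the equivalence of the two domains, $(a)\Leftrightarrow(b)$, follows since nuclear operators form an ideal that does not fix a copy of $\ell^\infty$ while $\mathcal{B}^0_\beta\simeq c_0$ is a closed, $w^*$-dense subspace of $\mathcal{B}_\beta\simeq\ell^\infty$, exactly as in Theorem \ref{th4}.

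The one place needing care, and the main obstacle, is the kernel-derivative estimate: differentiating $K_\zeta$ raises the exponent to $\alpha+3$ and introduces a factor $|\zeta|$, so the Pietsch bound only controls the integrand on $\{|\zeta|>1/2\}$ where $|\zeta|$ is bounded below. As in the proof of Theorem \ref{th4} one therefore splits $\int_{\mathbb D}$ into $\{|\zeta|\le 1/2\}$ and $\{|\zeta|>1/2\}$; on the small disc the boundedness of $S^\phi_g$ makes the contribution finite, while the large-annulus piece is governed by the Pietsch estimate. Once this split is in place the computation is routine and identical to Theorem \ref{th4}, so the proof reduces to verifying that the weight substitutions $\nu(\zeta)=(1-|\zeta|^2)^{\beta-1}$ and $(1-|z|^2)^\gamma$ in the target norm turn $N_\alpha$ into exactly $Q_\alpha$, which is checked above.
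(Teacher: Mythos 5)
Your proposal is correct, and it is essentially the paper's own route: the paper's entire proof of this corollary is the sentence ``Similarly, we have the following corollary,'' i.e.\ one specializes Theorem \ref{th4} through the identifications $\mathcal B_\beta=\mathcal H^\infty_{\beta-1}$ and $\mathcal B^0_\beta=\mathcal H^0_{\beta-1}$ (valid since $\beta>1$), exactly as you propose. The one point where you genuinely deviate is worth keeping: you decline to force the target into a weighted space and instead work with $\|S^\phi_g f\|_{\mathcal B_\gamma}=\sup_{z\in\mathbb D}(1-|z|^2)^\gamma|f'(\phi(z))|\,|g(z)|$ directly, re-running the proof of Theorem \ref{th4} (Pietsch's Theorem \ref{th2} on the domain side, differentiated kernels producing the exponent $\alpha+3$ and the factor $|\zeta|$, the split into $|\zeta|\le 1/2$ and $|\zeta|>1/2$). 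The paper's literal citation of Theorem \ref{th4} would instead require $\mathcal B_\gamma=\mathcal H^\infty_\mu$ with $\mu(z)=(1-|z|^2)^{\gamma-1}$ a normal weight, which holds only for $\gamma>1$: for $\gamma<1$ this $\mu$ is increasing, hence not a radial weight in the paper's sense, and for $\gamma=1$ it is constant and fails condition $(\mathbf{II})$. Since every step of the proof of Theorem \ref{th4} touches the target space only through the displayed sup, your re-run is legitimate and covers the full stated range $\gamma>0$, which the paper's one-line derivation, read strictly, does not. Your compressed justification of $(a)\Leftrightarrow(b)$ (ideal property, the $c_0$/$\ell^\infty$ structure of the domain spaces, and bi-transpose agreement as in Theorem \ref{lm2}) is at the same level of detail as the paper's own wording of that step in Theorems \ref{th3} and \ref{th4}, so no complaint there.
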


\end{document}